\documentclass[reqno]{amsart}
\usepackage{amssymb}
\usepackage{amsmath}
\usepackage{amsthm}
\usepackage{mathtools}
\usepackage[svgnames]{xcolor}
\usepackage[pdftex]{hyperref}

\hypersetup{
  pdftitle={},
  pdfauthor={Stefan Le Coz <slecoz@math.univ-toulouse.fr>, Yifei Wu <yifei@bnu.edu.cn>},
  pdfsubject={},
  pdfkeywords={}, 
  colorlinks=true,
  linkcolor=DarkBlue  ,          
  citecolor=DarkRed,        
  filecolor=DarkMagenta,      
  urlcolor=DarkGreen,           
}

\newtheorem{theorem}{Theorem}[section]
\newtheorem{proposition}[theorem]{Proposition}
\newtheorem{lemma}[theorem]{Lemma}
\newtheorem*{notation}{Notation}

\theoremstyle{remark}
\newtheorem{remark}[theorem]{Remark}
\newtheorem{claim}[theorem]{Claim}

\DeclarePairedDelimiter{\norm}{\lVert}{\rVert}
\DeclarePairedDelimiter{\abs}{\lvert}{\rvert}

\newcommand{\psld}[2]{\left( #1,#2 \right)_{2}}

\newcommand{\dual}[2]{\left\langle #1,#2 \right\rangle}

\newcommand{\eps}{\varepsilon}


\newcommand{\R}{\mathbb{R}}


\DeclareMathOperator{\spn}{span}
\DeclareMathOperator{\Mod}{Mod}

\renewcommand{\leq}{\leqslant}
\renewcommand{\geq}{\geqslant}

\DeclareMathAlphabet{\mathpzc}{OT1}{pzc}{m}{it}
\renewcommand{\Re}{\mathcal R\!\mathpzc{e}}
\renewcommand{\Im}{\mathcal I\!\mathpzc{m}}

\usepackage{color}


\begin{document}

\title[Stability of multi-solitons for dNLS]{Stability of
  multi-solitons for the\\ derivative nonlinear Schr\"odinger equation}

\author[S.~Le Coz]{Stefan Le Coz}
\thanks{The work of S. L. C. is 
  partially supported by ANR-11-LABX-0040-CIMI within the
  program ANR-11-IDEX-0002-02 and  ANR-14-CE25-0009-01}

\author[Y.~Wu]{Yifei Wu}

\address[Stefan Le Coz]{Institut de Math\'ematiques de Toulouse,
  \newline\indent
  Universit\'e Paul Sabatier
  \newline\indent
  118 route de Narbonne, 31062 Toulouse Cedex 9
  \newline\indent
  France}
\email[Stefan Le Coz]{slecoz@math.univ-toulouse.fr}

\address[Yifei Wu]{Center for Applied Mathematics,
  \newline\indent
  Tianjin University
  \newline\indent
  Tianjin 300072
  \newline\indent
  China}
\email[Yifei Wu]{yerfmath@gmail.com}

\subjclass[2010]{35Q55; 35B35; 35C08}

\date{\today}
\keywords{Derivative nonlinear Schr\"odinger equation, solitons,
  multi-solitons, stability}

\begin{abstract}
  The nonlinear Schr\"odinger equation with derivative cubic
  nonlinearity admits a family of solitons, which are 
  orbitally stable in the energy space. In this work, we prove the
  orbital stability of multi-solitons configurations in the energy space, under
  suitable assumptions on the speeds and frequencies of the composing
  solitons. The main ingredients of the proof are modulation theory,
  energy coercivity and
  monotonicity properties. 
\end{abstract}

\maketitle

\tableofcontents

\section{Introduction}

We consider the nonlinear Schr\"odinger equation with
derivative nonlinearity:
\begin{equation}\tag{dNLS}\label{eq:nls}
  iu_t+u_{xx}+i|u|^2u_x=0.
\end{equation}
The unknown function $u$ is a complex-valued function of time~$t\in\R$
and space~$x\in\R$.

The derivative nonlinear Schr\"odinger equation was originally introduced in Plasma Physics as a simplified
model for Alfv\'en waves propagation, see~\cite{Mj76,SuSu99}. Since
then, it has attracted a lot of attention from the mathematical
community. Let us give a few examples. It was first studied using
integrable methods by Kaup and Nevel~\cite{KaNe78}. Later on, Hayashi
and Ozawa~\cite{Ha93,HaOz92} obtained local well-posedness in the energy space
$H^1(\R)$. 
Local well-posedness in low regularity spaces $H^s(\R)$, $s\geq 1/2$
was investigated by Takaoka~\cite{Ta01}. The problem of global
well-posedness for small mass initial data in low regularity spaces has
attracted the attention of a number of authors, see e.g.~\cite{CoKeStTa01,CoKeStTaTa02,GuWu16,MiWuXu11}. 
When considered on the
torus, local existence in $H^s(\mathbb T)$ was proved by Herr~\cite{He06}
for $s\geq 1/2$. 
Results for $s<1/2$ were recently obtained by Takaoka~\cite{Ta16}. A
probabilistic approach to local existence was initiated by Thomann and
Tzvetkov~\cite{ThTz10}.
Recently, Wu~\cite{Wu13,Wu15}
proved global existence in $H^1(\R)$ for initial data $u_0$ having
mass~$M(u_0)=\frac12\norm{u_0}_{L^2}^2$ less than threshold $2\pi$. The method introduced by Wu was extend to the
torus by Mosincat and Oh~\cite{MoOh15}. Despite the amount of studies devoted to
\eqref{eq:nls}, existence of blowing up solutions remains a totally open
problem. Global existence was recently investigated using
integrability techniques by Liu, Perry and
Sulem~\cite{LiPeSu15,LiPeSu16}
and by Pelinovsky and Shimabukuro~\cite{PeSh16}. Analysis of singular
profiles in a supercritical version of~\eqref{eq:nls} was performed by
Cher, Simpson and Sulem~\cite{ChSiSu16}. 

Before presenting our results, we start with some preliminaries. 

Under gauge transformations, \eqref{eq:nls} may take various (equivalent) forms. In
particular, if 
\[
v(t,x)=\exp\left(-\frac i2\int_{-\infty}^x|u(t,y)|^2dy\right)u(t,x),
\] 
then $v$ solves 
\begin{equation}
\label{eq:kaup-newell}
iv_t+v_{xx}+i(|v|^2v)_x=0.
\end{equation}
Alternatively, setting 
\[
w(t,x)=\exp\left(\frac i4\int_{-\infty}^x|u(t,y)|^2dy\right)u(t,x),
\] 
then $w$ solves 
\begin{equation}
\label{eq:Gerdzhikov-Ivanov}
iw_t+w_{xx}-iw^2\bar w_x+\frac{1}{2}|w|^4w=0.
\end{equation}
Under the form \eqref{eq:nls}, the derivative nonlinear Schr\"odinger
equation is sometimes referred to as the Chen-Liu-Lee
equation~\cite{ChLeLi79}. The form \eqref{eq:kaup-newell} might be called the Kaup-Newell
equation~\cite{KaNe78}. The form \eqref{eq:Gerdzhikov-Ivanov} is the Gerdzhikov-Ivanov
equation~\cite{GeIv83}. Yet another notable (but apparently not christened) form of \eqref{eq:nls} is obtained setting 
\[
z(t,x)=\exp\left(\frac i2\int_{-\infty}^x|u(t,y)|^2dy\right)u(t,x).
\] 
Then $z$ solves 
\begin{equation}
\label{eq:Wu}
iz_t+z_{xx}-\frac i2|z|^2z_x+\frac i2z^2\bar z_x+\frac{3}{16}|z|^4z=0.
\end{equation}
The equation \eqref{eq:Wu} played a central role in the papers on
global well-posedness \cite{Wu13,Wu15}.
Since all these derivative nonlinear Schr\"odinger equations are related via gauge
transformations, any result on one of the forms can a priori be
transfered to the other forms. Depending on the aim, some form usually
turns
out to be much easier to work with than the others. In this paper, we will
mostly use the form \eqref{eq:nls}.

Interestingly, given a solution $u$ of \eqref{eq:nls} and
$\lambda>0$, then
\[
u_\lambda(t,x)=\frac{1}{\sqrt{\lambda}}u\left(\frac{t}{\lambda^2},\frac{x}{\lambda}\right)
\]
is also a solution of \eqref{eq:nls}. In particular, \eqref{eq:nls} is
$L^2$-critical. However, its behavior widely differs from the one of its
celebrated power-type counter part, the $1d$ quintic nonlinear
Schr\"odinger equation. In particular, \eqref{eq:nls} is not invariant
by the pseudo-conformal transformation and no explicit (and in fact
not at all) blow-up solution is known for \eqref{eq:nls}. Another main
difference between \eqref{eq:nls} and quintic NLS is that the former
one is \emph{not} invariant by a Galilean transform. 

The equation~\eqref{eq:nls} can be written in Hamiltonian form as 
\[
iu_t=E'(u),
\]
where the \emph{Hamiltonian} (or \emph{energy}) is given by 
\[
E(u)=\frac12\norm{u_x}_{L^2}^2+\frac14\Im\int_{\R}|u|^2\bar u u_xdx.
\]
At least formally, the Hamiltonian $E$ is conserved along the flow of
\eqref{eq:nls}. In addition, two other quantities are conserved: the
mass and the momentum, defined by 
\[
M(u)=\frac12\norm{u}_{L^2}^2,\quad P(u)=\frac12\Im\int_{\R}u\bar u_xdx.
\]
Note that~\eqref{eq:nls} is an integrable equation (see e.g.~\cite{KaNe78}) and there exists in fact an
infinity of conservation laws (see e.g.~\cite{TsFu81}). However, our
goal here is to study the properties of the solutions of
\eqref{eq:nls} with a robust method not relying on its algebraic
peculiarities. Our approach may be applied to other similar
non-integrable equations,
e.g. the generalized derivative nonlinear Schr\"odinger equations
considered in~\cite{AbLiSu11,Fu16,LiSiSu13-2,LiSiSu13,Oh14}.

As is now well-known, given real parameters $\omega>0$ and
$-2\sqrt{\omega} <c\leq 2\sqrt{\omega}$, there exist
traveling waves solutions of~\eqref{eq:nls} of the form
\[
R_{\omega,c}(t,x)=e^{i\omega t}\phi_{\omega,c}(x-ct). 
\]
The profiles $\phi_{\omega,c}$ are unique  up to phase shifts and
translations (see e.g.~\cite{CoOh06}) and are given by an explicit
formula (see Section~\ref{sec:solitons} for details). The stability of the solitary waves of~\eqref{eq:nls} was considered in
\cite{CoOh06,GuWu95,KwWu16}. In particular, in~\cite{CoOh06}, Colin and Ohta proved that for any $(\omega,c)\in\R^2$ with
$c^2<4\omega$, the solitary wave $R_{\omega,c}$
is orbitally stable. The orbital stability of the lump soliton in the case
$c=2\sqrt{\omega}$ was considered very recently by Kwon and Wu in~\cite{KwWu16}. The stability theory for a more general version
of~\eqref{eq:nls} was developed in~\cite{LiSiSu13} by Liu, Simpson and
Sulem.

\emph{Multi-solitons} are solutions to~\eqref{eq:nls} that behave at
large time like a sum
of solitons. They can be proved to exist by inverse scattering
transform (see~\cite{KaNe78} for~\eqref{eq:nls} or~\cite{ZaSh72} for the cubic nonlinear Schr\"odinger
equation), using energy methods (see~\cite{CoLe11,CoMaMe11,MaMe06} for
the nonlinear Schr\"odinger equation or e.g.~\cite{DeLeWe16,IaLe14}
for Schr\"odinger systems) or
with fixed point arguments (see~\cite{CoLe11,LeLiTs15,LeTs14} for the nonlinear Schr\"odinger equation). 

As each individual
solitary wave of~\eqref{eq:nls} is stable, it is reasonable to investigate the
stability of a sum of solitary waves; this will be our goal in this
paper. 
Precisely, we want to show that, under some
conditions, if the initial data is close to a sum of solitons
profiles, then the associated solution of~\eqref{eq:nls} will behave for any
positive time as a sum of solitons. To our
knowledge, no information was available so far on the stability of these
multi-solitons configurations.

To study the stability of multi-solitons, several
approaches are possible. One can work in weighted spaces and get
asymptotic stability results for multi-solitons configurations
\cite{Pe04,RoScSo03}. An alternative approach, when the underlying
equation is integrable, is to take advantage of the integrable
structure to obtain stability (in a relatively restricted class of
functions), see e.g.~\cite{GeZh09}. Finally, one can work in the
energy space, and this is what will be done in this paper. We will
prove a result in the same family as the results obtained by Martel,
Merle and Tsai for the Kortweg-de Vries equation~\cite{MaMeTs02} and the twisted
nonlinear Schr\"odinger equation~\cite{MaMeTs06}. 
This approach was later extended to the Gross-Pitaevskii equation by Bethuel, Gravejat and
Smets~\cite{BeGrSm14} and to the Landau-Lifshitz equation by de Laire
and Gravejat~\cite{deGr15}. Instability results are available in~\cite{Co14,CoLe11}.

Our main result is the following.

\begin{theorem}\label{thm:1}
  Let $N\in\mathbb N$. For $j=1,\dots,N$  let
  $\omega_j\in(0,\infty)$, $c_j\in(-2\sqrt{\omega_j},2\sqrt{\omega_j})$, $x_j\in\R$
  and $\theta_j\in\R$. Let $(\phi_j)=(\phi_{\omega_j,c_j})$ be the
  corresponding solitary wave profiles given by the explicit
  formula~\eqref{eq:16}. For $j=2,\dots,N$, let 
  \[
  \sigma_j=2\frac{\omega_j-\omega_{j-1}}{c_j-c_{j-1}}.
  \]
  Assume that for  $j=2,\dots,N$ we have
  \begin{equation}
    \label{eq:speed-frequency-ratios}
    \sigma_j>0\quad\text{and}\quad c_{j-1}<\sigma_j<c_j.
  \end{equation}
  Then there exist $\alpha>0$, $L_0>0$, $A_0>0$ and $\delta_0>0$ such that for any
  $u_0\in H^1(\R)$, $L>L_0$ and $0<\delta<\delta_0$, the following
  property is satisfied. If 
  \[
  \norm[\bigg]{u_0-\sum_{j=1}^Ne^{i\theta_j}\phi_j(\cdot-x_j)}_{H^1}\leq \delta
  \]
  and if for all $j=1,\dots,N-1$,
  \[
  x_{j+1}-x_j>L,
  \]
  the solution $u$ of~\eqref{eq:nls} with $u(0)=u_0$ is globally defined in
  $H^1(\R)$ for $t\geq 0$, and there exist functions
  $\tilde x_1(t),\dots,\tilde x_N(t)\in\R$, and 
  $\tilde \theta_1(t),\dots,\tilde \theta_N(t)\in\R$, such that for all $t\geq 0$, 
  \[
  \norm[\bigg]{u(t)-\sum_{j=1}^Ne^{i\tilde\theta_j(t)}\phi_j(\cdot-\tilde
    x_j(t))}_{H^1}\leq A_0\left(\delta+e^{-\alpha L}\right).
  \]
\end{theorem}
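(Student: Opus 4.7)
The plan is to follow the Lyapunov approach of Martel--Merle--Tsai~\cite{MaMeTs02,MaMeTs06}, adapted to the derivative nonlinearity. I introduce the action
\[
\mathcal{F}(u)=E(u)+\sum_{j=1}^N\bigl(\omega_j M_j(u)+c_j P_j(u)\bigr),
\]
where $M_j$ and $P_j$ are localized mass and momentum built from a partition of unity $1=\sum_j\psi_j$ whose transition points slide rigidly at the speeds $\sigma_j$ given in~\eqref{eq:speed-frequency-ratios}. Because each profile $\phi_j$ is a critical point of $E+\omega_j M+c_j P$, the soliton train is, up to interaction terms that are exponentially small in the separation $L$, a critical point of $\mathcal{F}$, so this is the natural functional to monitor. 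The whole proof runs as a bootstrap on the maximal time interval $[0,T^\star)$ on which $u(t)$ stays within $A_0(\delta+e^{-\alpha L})$ of the family of modulated soliton trains; the goal is to improve this bound and close the bootstrap.

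The first ingredient is a standard modulation lemma producing $C^1$ parameters $\tilde x_j(t)$, $\tilde\theta_j(t)$ such that the remainder
\[
\eta(t)=u(t)-\sum_{j=1}^Ne^{i\tilde\theta_j(t)}\phi_j(\cdot-\tilde x_j(t))
\]
is orthogonal to the symmetry directions $\partial_xR_j$ and $iR_j$; the modulation equations then control $|\dot{\tilde x}_j-c_j|+|\dot{\tilde\theta}_j-\omega_j|$ by $\norm{\eta}_{H^1}$ plus exponentially small cross terms. The second ingredient is coercivity: on the symplectic orthogonal of the null space of its Hessian at $\phi_j$, the quadratic form associated with $E+\omega_jM+c_jP$ is positive definite in the subcritical range $c_j^2<4\omega_j$, which is exactly the spectral content of the Colin--Ohta single-soliton stability result. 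Combining these local coercivities with the localization provided by the cutoffs $\psi_j$ yields the estimate
\[
\mathcal{F}(u(t))-\mathcal{F}\Bigl(\sum_{j=1}^N e^{i\tilde\theta_j(t)}\phi_j(\cdot-\tilde x_j(t))\Bigr)\geq \kappa\,\norm{\eta(t)}_{H^1}^2-Ce^{-\alpha L}.
\]

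The decisive step, which I expect to be the main obstacle, is the almost-monotonicity of $\mathcal{F}$ along the flow, equivalently of the localized quantities $M_j$ and $P_j$. Differentiating them in time produces interior flux terms from $u_{xx}$ together with contributions from the derivative nonlinearity $i|u|^2u_x$ that have no analog in the power-type case. The sliding speed $\sigma_j$ is forced by the identity $2(\omega_j-\omega_{j-1})=\sigma_j(c_j-c_{j-1})$ to make the leading quadratic part of the combined flux $\omega_{j-1}\dot M_{j-1}+\omega_j\dot M_j+c_{j-1}\dot P_{j-1}+c_j\dot P_j$ at a transition a complete square in $u$ and $u_x$; the ordering $c_{j-1}<\sigma_j<c_j$ ensures that both adjacent solitons move away from the transition in opposite directions so that cross contributions are $O(e^{-\alpha L})$; and the positivity $\sigma_j>0$ is what allows the quartic contribution of the derivative nonlinearity at the cutoff to be absorbed into the good quadratic term, compensating for the absence of a Galilean symmetry. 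Once this gives $\mathcal{F}(u(t))\leq\mathcal{F}(u(0))+Ce^{-\alpha L}$ up to cubic corrections in $\eta$, combining with the coercivity estimate above and the bound $\mathcal{F}(u(0))-\mathcal{F}(\sum_je^{i\theta_j}\phi_j(\cdot-x_j))=O(\delta)+O(e^{-\alpha L})$ on the initial data closes the bootstrap for $A_0$ large enough. The resulting a priori $H^1$ bound, together with the local Cauchy theory of Hayashi--Ozawa, delivers global existence for $t\geq 0$ and the claimed stability estimate.
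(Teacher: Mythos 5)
Your overall architecture (bootstrap, modulation, localized action built from cut-off masses and momenta whose transition points slide at the speeds $\sigma_j$, almost-monotonicity closing the loop) is the same as the paper's, and your heuristic for the role of the hypotheses is essentially right: the paper proves monotonicity of each localized functional by a Galilean change of frame at speed $\tilde\sigma_j$ (which eliminates the mass term), uses $\sigma_j>0$ to discard the quartic term with a good sign, and uses $c_{j-1}<\sigma_j<c_j$ so that the moving cut-offs separate from the adjacent solitons.

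The genuine gap is in the coercivity step. The quadratic form $\dual{S_{\omega_j,c_j}''(\phi_j)\cdot}{\cdot}$ is \emph{not} positive definite on the ($L^2$- or symplectic-) orthogonal complement of the kernel of the Hessian: it has exactly one negative eigenvalue (indeed $\dual{S''(\phi)\phi}{\phi}=-2L(\phi)<0$), and since kernel directions are null for the form, one can perturb the negative direction $\partial_\omega\phi_j+\mu_j\partial_c\phi_j$ by kernel elements so as to satisfy your two orthogonality conditions $\psld{\eta}{iR_j}=\psld{\eta}{\partial_xR_j}=0$ while keeping the form negative. Hence the lower bound $\mathcal F(u(t))-\mathcal F(\text{train})\geq\kappa\norm{\eta}_{H^1}^2-Ce^{-\alpha L}$ that your argument rests on is false with only phase/translation modulation. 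In the single-soliton (Colin--Ohta) setting the negative direction is neutralized by the exact conservation of a combination of mass and momentum; for a soliton train the relevant quantities are the \emph{localized} ones, which are not conserved, so this does not transfer. The paper must therefore (i) modulate also in the frequencies $\tilde\omega_j$ (with $\tilde c_j$ slaved to $\tilde\omega_j$ through \eqref{eq:wc}) so as to impose the third orthogonality condition $\psld{\eta}{\tilde R_j+i\mu_j\partial_x\tilde R_j}=0$ of Proposition~\ref{prop:3}, at the price of a non-diagonal Jacobian in Proposition~\ref{prop:modulation}; and (ii) control the drift $|\tilde\omega_j(t)-\tilde\omega_j(0)|$, which is the most delicate part of the whole proof: it is done by expressing the almost-conservation of $Q_j(\tilde R_j)=M(\tilde R_j)+\mu_jP(\tilde R_j)$ through differences of auxiliary functionals $\mathcal I_{j,\tau_j}$ with shifted speeds $\tau_j$ and invoking their monotonicity (Proposition~\ref{prop:monotonicity-three}, Claim~\ref{claim:Qj}, Lemma~\ref{lem:control-modulation}). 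Both ingredients are absent from your plan, and without them the bootstrap cannot be closed; this is not a technical refinement but the core additional mechanism (inherited from Martel--Merle--Tsai) that compensates for the non-conservation of the localized mass/momentum.
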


\begin{remark}
  We can further describe the behavior of the functions
  $\tilde\theta_j$ and $\tilde x_j$, see Proposition~\ref{prop:modulation}. In
  particular, they are of class $\mathcal C^1$
  and verify the dynamical laws
  \[
  \partial_t\tilde x_j\sim c_j,\quad \partial_t\tilde \theta_j\sim \omega_j.
  \]
  Hence the behavior of $e^{i\tilde\theta_j(t)}\phi_j(\cdot-\tilde
  x_j(t))$ is close to the one of $R_{\omega_j,c_j}(t)$.
\end{remark}

\begin{remark}
  As we are trying to prove a stability result for the multi-solitons
  configuration, it natural to assume that we start with well ordered
  solitary waves, i.e. 
  \[
  c_1<c_2<\cdots<c_N,\quad\text{ and }\quad x_1<x_2<\cdots<x_N.
  \]
  This prevents the crossing of solitary waves at a later time. 

  As in~\cite{MaMeTs06}, the stronger condition~\eqref{eq:speed-frequency-ratios} that we impose on the parameters of
  the waves is needed for technical purposes. 
  
  A consequence of~\eqref{eq:speed-frequency-ratios}  is that 
  \[
  \omega_1<\omega_2<\cdots<\omega_N.
  \]
  In other words, the larger
  the amplitude is, the faster the soliton should travel. This is
  somehow reminiscent from what happens in the context of the
  Korteweg-de Vries (KdV) equation, where speed and amplitude are controlled
  by the same parameter~\cite{MaMeTs02}. Another similitude with the
  KdV equation, consequence of~\eqref{eq:speed-frequency-ratios}, is
  that our solitons should all be traveling to the right (i.e. with positive
  speeds), except for the first one, which is allowed to travel to the
  left (i.e. with negative speed). 

  The speed/frequency ratio  condition is different from the
  equivalent one in~\cite[condition (A3)]{MaMeTs06}. In particular,
~\cite[condition (A3)]{MaMeTs06} allows for the solitary waves to
  have equal frequencies, in which case the only condition for the
  speeds is to be strictly increasing. This is ruled out by
~\eqref{eq:speed-frequency-ratios}.
  
  An example of a range of parameters verifying
~\eqref{eq:speed-frequency-ratios} is given by
  \[
  \omega_j=j^2+1,\quad c_j=2j, \quad j=1,\dots,N.
  \]
  It is not hard to construct many other examples.
\end{remark}

\begin{remark}
  The constant $\alpha$ in Theorem~\ref{thm:1} can be made explicit:
  $\alpha=\frac{1}{32}\omega_\star $, where $\omega_\star$
  is the minimal decay rate of the solitons defined in
~\eqref{eq:def-omega}.
\end{remark}

\begin{remark}
  Our theorem does not cover the case where one of the solitons is a
  lump soliton, i.e. when $c_j=2\sqrt{\omega_j}$ for some
  $j=1,\dots,N$. Indeed, lump solitons are significantly different
  from the other solitons (algebraic decay instead of exponential
  decay, weaker stability, etc.), which prevent to include them in our
  analysis. 
\end{remark}

\begin{remark}
  The main technical differences between
  (NLS) and~\eqref{eq:nls} are that the later one is not any more
  Galilean invariant and there is no scaling between
  solitons. Nevertheless
  the proof of our result is largely inspired by the proof of
~\cite[Theorem~1]{MaMeTs06}. 
  As far as possible, we have tried to
  keep the same (or similar) notations.  
\end{remark}

Our strategy for the proof of Theorem~\ref{thm:1} is, as in~\cite{MaMeTs02,MaMeTs06}, the following. We
use a bootstrap argument, which goes as follows.  Assume that an
initial data $u_0$
is located close enough to a sum of soliton profiles, and that
the associated solution $u$ to~\eqref{eq:nls} stays until some time $T$ in a neighborhood  of size $\eps$ of a sum of (modulated)
solitons profiles. The bootstrap argument tells us that,
in fact, $u$ stays until the same time $T$ in \emph{a neighborhood  of size $\eps/2$} of a sum of (modulated)
solitons profiles. This allows us to extend the time $T$ up to
$\infty$ and proves the stability of the configuration. To obtain the
bootstrap result, we rely on several ingredients. First, we need a
modulation result around the soliton profiles. As usual, modulation is
obtained via the Implicit Function Theorem, with here the particularity
that we rely on explicit calculations to prove invertibility of the
Jacobian. This is a specific feature of~\eqref{eq:nls} which allows us
to modulate in a more natural way than in~\cite{MaMeTs06}. The second
ingredient is a coercivity property for a linearized action
functional. The functional is based on the linearized action around
each soliton, which we prove to be coercive (up to some orthogonality
conditions). The third ingredient is a series
of monotonicity properties for suitably localized mass/momentum functionals. These monotonicity
properties are involved in a crucial way in the control of the
modulation parameters. 

The rest of the paper is organized as follows. In Section
\ref{sec:solitons} we review and develop the stability theory for a single
solitary wave. In particular, we obtain a coercivity property for the linearized
action functional around a single soliton using variational
characterizations. In Section~\ref{sec:bootstrap-argument} we state
the bootstrap argument and prove Theorem~\ref{thm:1}. Section
\ref{sec:modulation} is devoted to the modulation result. In section
\ref{sec:local-proc} we derive the monotonicity properties of
localized mass/momentum functionals. Section~\ref{sec:line-acti-funct} deals with the
construction of the action-like linearized functional for the sum of
solitons and its coercivity properties. In Section
\ref{sec:contr-modul-param} we control the modulation parameters using
the monotonicity properties. Finally, in Section
\ref{sec:proof-bootstr-result} we prove the bootstrap result. The
Appendix~\ref{sec:some-techn-results} contains explicit formulas that we use at several occasions
in the paper.

\begin{notation}
  The space $L^2(\R)$ is considered as a real Hilbert space with the
  scalar product
  \[
  \psld{u}{v}=\Re\int_{\R}u\bar vdx.
  \]
  Whenever an inequality is true up to a positive constant, we use the
  notation $\gtrsim$ or $\lesssim$. 
  Throughout the paper, the letter $C$ will denote various positive
  constants whose exact value may vary from line to line but is of no
  importance in the analysis. 
\end{notation}

\section{Solitary Waves and Stability Theory}\label{sec:solitons}

We will need for
the study of the stability of a sum of solitary waves some tools coming from the
stability theory of a single solitary wave. These tools are however
not immediately available in the literature  and we need to introduce
them ourselves in this section. We believe that the results presented
in this section are of independent interest and may be useful for
further studies of~\eqref{eq:nls}.

Recall that given parameters $\omega>0$ and $|c|<2\sqrt{\omega}$, there exist
traveling waves solutions of~\eqref{eq:nls} of the form
\[
R_{\omega,c}(t,x)=e^{i\omega t}\phi_{\omega,c}(x-ct). 
\]
The profile $\phi_{\omega,c}$ is unique  up to phase shifts and
translations (see e.g.~\cite{CoOh06}) and is given by the explicit formula
\begin{equation}
  \label{eq:16}
  \phi_{\omega,c}(x)=\varphi_{\omega,c}(x)\exp\left(\frac c2 ix-\frac i4\int_{-\infty}^x|\varphi_{\omega,c}(\xi)|^2d\xi\right)
\end{equation}
where
\begin{equation}
  \label{eq:varphi-formula}
  \varphi_{\omega,c}(x)=\left(\frac{\sqrt{\omega}}{4\omega-c^2}\left(\cosh\left(x\sqrt{4\omega-c^2}\right)-\frac{c}{2\sqrt{\omega}}\right)\right)^{-\frac12}.
\end{equation}
The profile is also the unique  (up to phase shifts and translations)
solution to the elliptic ordinary differential equation 
\begin{equation}\label{eq:profile}
  -\phi_{xx}-i|\phi|^2\phi_x+\omega\phi+ic\phi_x=0,
\end{equation}
and it is also a critical point of the action functional $S$, 
\[
S=S_{\omega,c}=E+\omega M+c P.
\]
For future reference, note that the function $\varphi_{\omega,c}$ verifies the equation
\[
-\varphi_{xx}+\left(\omega-\frac{c^2}{4}\right)\varphi-\frac12\Im\left(\varphi\bar\varphi_x\right)\varphi+\frac{c}{2}|\varphi|^2\varphi-\frac{3}{16}|\varphi|^4\varphi=0,
\]
or, since $\varphi_{\omega,c}$ is real, 
\begin{equation}\label{eq:varphi}
  -\varphi_{xx}+\left(\omega-\frac{c^2}{4}\right)\varphi+\frac{c}{2}|\varphi|^2\varphi-\frac{3}{16}|\varphi|^4\varphi=0.
\end{equation}
From the explicit formula~\eqref{eq:16}, we see that $\phi_{\omega,c}$
is exponentially decaying. Precisely, for any $\alpha<1$ we have
\begin{equation}\label{eq:exp-decay}
  |\partial_x\phi_{\omega,c}(x)|+|\phi_{\omega,c}(x)|\leq C_\alpha e^{-\alpha\sqrt{\omega-\frac{c^2}{4}}|x|}.
\end{equation}
Note that this decay is not affected by a Gauge transform or a
Galilean transform.

The main result of this section is the following coercivity property. 

\begin{proposition}[Coercivity for one solitary wave]
  \label{prop:3}
  For any $\omega, c\in\R$ with $4\omega>c^2$, there exists $\mu\in\R$
  such that
  for
  any $\eps\in H^1(\R)$ verifying the orthogonality conditions 
  \[
  \psld{\eps}{i\phi_{\omega,c}}=
  \psld{\eps}{\partial_x\phi_{\omega,c}}=
  \psld{\eps}{\phi_{\omega,c}+i\mu\partial_x\phi_{\omega,c}}=
  0,
  \]
  we have 
  \[
  H_{\omega,c}(\eps):=\dual{S_{\omega,c}''(\phi_{\omega,c})\eps}{\eps}\gtrsim \norm{\eps}_{H^1}^2.
  \]
\end{proposition}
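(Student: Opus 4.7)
The plan is to follow the Grillakis-Shatah-Strauss coercivity scheme, adapted to the two-parameter family of solitons of~\eqref{eq:nls}. Set $L := S''_{\omega,c}(\phi_{\omega,c})$, regarded as a self-adjoint operator on $L^2(\R)$ with the real scalar product $\psld{\cdot}{\cdot}$. I would first establish the spectral structure of $L$: a two-dimensional kernel spanned by $i\phi_{\omega,c}$ and $\partial_x \phi_{\omega,c}$ (from invariance of $S_{\omega,c}$ under phase rotation and spatial translation), a single simple negative eigenvalue $-\lambda<0$ with unit eigenfunction $\chi$, and a strictly positive spectral gap above zero, with essential spectrum starting at $\omega-c^2/4>0$. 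The cleanest route is to use the gauge factor from~\eqref{eq:16} to write $\phi_{\omega,c}$ in terms of the real profile $\varphi_{\omega,c}$ defined by~\eqref{eq:varphi-formula}; this reduces $L$ to a matrix Schr\"odinger operator with coefficients explicit in $\varphi_{\omega,c}$ and $\varphi_{\omega,c}'$, which can be analysed by classical Sturm-Liouville methods.

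Next, I would record the two fundamental identities obtained by differentiating the profile equation $S'_{\omega,c}(\phi_{\omega,c})=0$ in the parameters:
\[
L(\partial_\omega \phi_{\omega,c}) = -\phi_{\omega,c}, \qquad L(\partial_c \phi_{\omega,c}) = -i\,\partial_x \phi_{\omega,c}.
\]
These imply that, for $\eps$ orthogonal to $\ker L$, the third orthogonality condition $\psld{\eps}{\phi_{\omega,c}+i\mu\,\partial_x\phi_{\omega,c}}=0$ is equivalent to $\dual{L\eps}{\partial_\omega\phi_{\omega,c}+\mu\,\partial_c\phi_{\omega,c}}=0$. The parameter $\mu$ is then chosen so that $\partial_\omega\phi_{\omega,c}+\mu\,\partial_c\phi_{\omega,c}$ aligns, modulo $\ker L$, with the negative eigenvector $\chi$. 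Existence of such $\mu$ is a linear-algebra statement about the $2\times 2$ matrix with entries $\dual{L\,\partial_\omega\phi_{\omega,c}}{\partial_\omega\phi_{\omega,c}}$, $\dual{L\,\partial_\omega\phi_{\omega,c}}{\partial_c\phi_{\omega,c}}$, and so on, which by the identities above coincides with $-d''(\omega,c)$, where $d(\omega,c):=S_{\omega,c}(\phi_{\omega,c})$ and hence $d''$ is built from $\partial_\omega M$, $\partial_c M$, $\partial_\omega P$, $\partial_c P$ evaluated on the profile. Under the assumption $c^2<4\omega$, the signature of $d''$ can be computed directly from~\eqref{eq:varphi-formula} and is precisely of the mixed-sign type that both implies orbital stability (as in Colin-Ohta) and furnishes a suitable real~$\mu$.

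Coercivity is then obtained by spectral decomposition. Writing $\eps=a\chi+\eps_+$, with $\eps_+$ orthogonal to $\chi$ and to $\ker L$ thanks to the first two orthogonality conditions, one has
\[
\dual{L\eps}{\eps} = -\lambda a^2 + \dual{L\eps_+}{\eps_+} \geq -\lambda a^2 + \delta\,\norm{\eps_+}_{L^2}^2,
\]
where $\delta>0$ is the spectral gap. The third orthogonality condition, translated via the identities above, forces $|a|\lesssim\norm{\eps_+}_{L^2}$; the specific alignment chosen for $\mu$ makes the implicit constant small enough to yield $\dual{L\eps}{\eps}\gtrsim\norm{\eps}_{L^2}^2$. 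The upgrade from $L^2$ to $H^1$ control is then routine: one first shows, using $|\phi_{\omega,c}|$ bounded and absorbing the cross terms $\Im\!\int\eps\bar\eps_x$ into $\norm{\eps_x}_{L^2}^2+C\norm{\eps}_{L^2}^2$, that $\dual{L\eps}{\eps}+C\norm{\eps}_{L^2}^2\gtrsim\norm{\eps}_{H^1}^2$, and combines this with the previous $L^2$ bound.

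The main obstacle is the spectral analysis in the first step: showing that $L$ has exactly one simple negative eigenvalue with a uniform gap requires a careful study of the matrix Schr\"odinger operator obtained after gauge reduction. A subtler point is the choice of $\mu$ in the second step, which depends on the off-diagonal entries of $d''(\omega,c)$ and on the direction of $\chi$ within the two-dimensional subspace $\spn(\partial_\omega\phi_{\omega,c},\partial_c\phi_{\omega,c})$; verifying that this direction is well-defined and yields an admissible real $\mu$ is tied precisely to the strict stability condition $c^2<4\omega$.
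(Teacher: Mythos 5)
Your overall scheme (spectral decomposition of $S''_{\omega,c}(\phi_{\omega,c})$, a negative direction built from $\partial_\omega\phi+\mu\,\partial_c\phi$ via $S''(\phi)\partial_\omega\phi=-\phi$ and $S''(\phi)\partial_c\phi=-i\phi_x$, then an upgrade to $H^1$) is the same as the paper's. But the proposal has two genuine gaps. First, the spectral input you assume — kernel exactly $\spn\{i\phi,\partial_x\phi\}$ and \emph{exactly one simple} negative eigenvalue — is the heart of the proof, and you leave it as an acknowledged ``main obstacle''. The route you suggest (Sturm--Liouville analysis of the gauge-reduced matrix operator) does not straightforwardly work: the substitution that reduces $S''(\phi)$ to the pair $L_\pm$ is not unitary, and the reduced operator $L_-$ contains the first-order term $-\tfrac{\varphi^2}{2}\partial_x$, so one is not dealing with self-adjoint scalar Schr\"odinger operators to which oscillation theory applies for eigenvalue counting. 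The paper uses this reduction only to identify the kernel (via a Wronskian argument), and obtains the negative-eigenvalue count differently: ``at most one'' follows from the variational characterization of $\phi_{\omega,c}$ as minimizer of the action on the Nehari manifold (Proposition~\ref{prop:nehari}, itself proved by a concentration-compactness argument), and ``exactly one'' from the explicit sign $\dual{S''(\phi)\phi}{\phi}=-2L(\phi)<0$. Without a substitute for this input your argument does not get off the ground.

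Second, your mechanism for choosing $\mu$ is not correct as stated. With a single real parameter you cannot in general make $\psi=\partial_\omega\phi+\mu\,\partial_c\phi$ ``align, modulo $\ker S''(\phi)$, with the negative eigenvector'', and no such alignment (nor any smallness of implicit constants in $|a|\lesssim\norm{\eps_+}_{L^2}$) is needed. The correct and sufficient requirement is only $\dual{S''(\phi)\psi}{\psi}<0$, which the paper computes explicitly from the appendix formulas as $\bigl(\tfrac{c}{\omega}-4\mu+c\mu^2\bigr)(4\omega-c^2)^{-1/2}$, so a suitable real $\mu$ exists whenever $c^2<4\omega$. Positivity on the orthogonal set then follows from the exact identity $0=\psld{\eps}{\phi+i\mu\,\partial_x\phi}=-\dual{S''(\phi)\psi}{\eps}$ together with a Cauchy--Schwarz inequality in the positive spectral subspace (using $\dual{S''(\phi)\eta}{\eta}<\alpha^2\lambda$, where $\eta$ is the positive-subspace component of $\psi$); your ``small constant'' sketch would not by itself close this step. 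By contrast, your final $L^2\to H^1$ upgrade (a G\aa rding-type bound plus interpolation with the $L^2$ coercivity) is fine and is a legitimate, slightly more direct alternative to the paper's compactness/contradiction argument in its Step~5.
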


\begin{remark}
  If $c<0$, then we can choose $\mu=0$. 
\end{remark}

\begin{remark}
  Following the classical approach by Weinstein~\cite{We85} (see
  e.g.~\cite{Le09} for an introduction),
  one obtains as a corollary of Proposition~\ref{prop:3} the orbital
  stability of solitary waves. 
\end{remark}

\begin{notation}
  Many quantities defined in this paper will depend on $\omega$ and
  $c$. For the sake of clarity in notation, we shall very often drop the
  subscript $\omega,c$ and dependency in $\omega$ and $c$ will be only
  understood. 
\end{notation}

For future reference, we give the explicit expression of $S''(\phi)\eps$:
\begin{equation}\label{eq:explicit}
  S''(\phi)\eps=-\partial_{xx}\eps-i|\phi|^2\partial_x\eps-2i\Re(\phi \bar\eps)\phi_x+\omega\eps+ic\partial_x\eps,
\end{equation}
and the explicit expression of $H$:
\[
H(\eps)=\norm{\eps_x}_{L^2}^2+\Im\int_\R|\phi|^2\bar\eps
\eps_xdx+2\int_\R\Re(\phi \bar \eps)\Im(\phi_x \bar\eps)dx+\omega\norm{\eps}_{L^2}^2 +c\Im\int_{\R}\eps\bar\eps_xdx.
\]

The proof of Proposition~\ref{prop:3} makes use of the following minimization
result. Define for $v\in H^1(\R)$ the Nehari functional corresponding to
\eqref{eq:profile} by 
\[
I(v)=I_{\omega,c}(v)=\norm{v_x}_{L^2}^2+\Im\int_{\R}|v|^2\bar v v_xdx+\omega \norm{v}_{L^2}^2+c\Im\int_{\R} v\bar v_xdx.
\]
Remark that $I(v)=\frac{\partial }{\partial t}S(tv)_{|t=0}$. Define
the minimum of
the action $S$ on the Nehari manifold by
\[
m_{\mathcal N}=\inf\{S(v);\, v\in H^1(\R)\setminus\{0\},\,I(v)=0\}
\]
and let 
the set of minimizers be denoted by
\[
\mathcal G_{\mathcal N}:=\{ v\in H^1(\R)
\setminus\{0\};\,I(v)=0,\,S(v)=m_{\mathcal N}\}. 
\]

\begin{proposition}
  \label{prop:nehari}
  Let  $\omega, c\in\R$ be such that $4\omega>c^2$. Then $m_{\mathcal
    N}>0$ and
  $\phi_{\omega,c}$ is up to phase shift and translation the unique
  minimizer for $m_{\mathcal N}$, that is
  \[
  \mathcal G_{\mathcal N}=\{e^{i\theta}\phi_{\omega,c}(x-y);\,\theta,y\in\R\}.
  \]
\end{proposition}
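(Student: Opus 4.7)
The plan is to prove Proposition~\ref{prop:nehari} by a standard variational argument on the Nehari manifold $\{v\in H^1(\R)\setminus\{0\}:I(v)=0\}$, combined with the known uniqueness of the profile equation.

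First I would establish positivity of $m_{\mathcal N}$ and boundedness of minimizing sequences in $H^1$. Using the constraint $I(v)=0$ to eliminate the derivative-cubic term, one derives the identity $S(v)=\tfrac14\bigl(\|v_x\|_{L^2}^2+\omega\|v\|_{L^2}^2+c\Im\int_\R v\bar v_x\,dx\bigr)$ on the Nehari manifold. The estimate $|c\Im\int v\bar v_x\,dx|\leq|c|\|v\|_{L^2}\|v_x\|_{L^2}\leq\tfrac{|c|}{2\sqrt\omega}\bigl(\omega\|v\|_{L^2}^2+\|v_x\|_{L^2}^2\bigr)$, together with the hypothesis $|c|<2\sqrt\omega$, then yields $S(v)\gtrsim\|v\|_{H^1}^2$. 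Plugging the H\"older-Sobolev bound $|\Im\int|v|^2\bar v v_x\,dx|\leq C\|v\|_{H^1}^4$ into $I(v)=0$ produces a uniform lower bound $\|v\|_{H^1}\gtrsim 1$, so that $m_{\mathcal N}>0$ and every minimizing sequence is bounded in $H^1$.

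Next I would construct a minimizer via concentration-compactness and identify it. Sobolev embedding and the $H^1$-lower bound give $\|v_n\|_{L^6}\gtrsim 1$, ruling out vanishing in Lions' sense; strict subadditivity of $m_{\mathcal N}$ rules out dichotomy. After translations $v_n(\cdot+x_n)$ and extraction, a minimizing sequence converges weakly in $H^1$ to some $v_\infty\neq 0$. A Brezis-Lieb type decomposition for the derivative-cubic term, weak lower semicontinuity of the quadratic parts, and a rescaling $v_\infty\mapsto\lambda_0 v_\infty$ to restore the Nehari constraint if needed, show that the rescaled limit attains $m_{\mathcal N}$. By Lagrange multipliers, $S'(v_\infty)=\mu I'(v_\infty)$ for some $\mu\in\R$. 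Pairing with $v_\infty$ and using $\langle S'(v_\infty),v_\infty\rangle=I(v_\infty)=0$, the relation reduces to $\mu\langle I'(v_\infty),v_\infty\rangle=0$; a direct scaling computation from $I(tv_\infty)=t^2P_1+t^4P_2$ with $P_1:=\|v_{\infty,x}\|_{L^2}^2+\omega\|v_\infty\|_{L^2}^2+c\Im\int v_\infty\bar v_{\infty,x}\,dx$ and $P_2:=\Im\int|v_\infty|^2\bar v_\infty v_{\infty,x}\,dx$ gives $\langle I'(v_\infty),v_\infty\rangle=2P_1+4P_2=-2P_1<0$ on the Nehari manifold (using $P_1+P_2=0$ and $P_1>0$ from the previous paragraph). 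Hence $\mu=0$, $v_\infty$ solves the profile equation~\eqref{eq:profile}, and by uniqueness~\cite{CoOh06} we get $v_\infty=e^{i\theta}\phi_{\omega,c}(\cdot-y)$; conversely any such $v_\infty$ is manifestly in the Nehari manifold with $S$-value $m_{\mathcal N}$, which yields the stated description of $\mathcal G_{\mathcal N}$.

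The main obstacle I anticipate is the concentration-compactness step: ruling out dichotomy requires a strict-subadditivity argument adapted to the combined-power structure of $S$, and ensuring the weak limit lies exactly on the Nehari manifold (which is not scale-invariant) requires a careful rescaling together with strict monotonicity of $\lambda\mapsto S(\lambda v)$ near $\lambda=1$.
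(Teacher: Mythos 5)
Your overall skeleton matches the paper's: coercivity of $S$ on the Nehari manifold and a uniform $H^1$ lower bound give $m_{\mathcal N}>0$, and your Lagrange multiplier computation ($\dual{I'(v_\infty)}{v_\infty}=2P_1+4P_2=-2P_1<0$, hence the multiplier vanishes, $v_\infty$ solves \eqref{eq:profile}, and uniqueness of the profile identifies $\mathcal G_{\mathcal N}$) is exactly the paper's final step. The gap is in the compactness stage, and it sits precisely at the two points you flag as ``anticipated obstacles'' without resolving them. First, you rule out dichotomy by invoking ``strict subadditivity of $m_{\mathcal N}$''; for this functional, with a sign-indefinite derivative-cubic term and no scaling homogeneity, strict subadditivity is not a standard fact and you give no argument for it. Second, your fallback of rescaling the weak limit, $v_\infty\mapsto\lambda_0 v_\infty$, to restore the constraint only works when $I(v_\infty)<0$ (then $\lambda_0\in(0,1)$ exists and $S(\lambda_0v_\infty)=\tfrac{\lambda_0^2}{4}L(v_\infty)<\tfrac14 L(v_\infty)$ is useful); if the weak limit were to satisfy $I(v_\infty)>0$, either no rescaling lands on the manifold or it requires $\lambda_0>1$, which inflates $S$ and yields no contradiction. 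So as written the argument does not exclude the bad case $I(v_\infty)>0$, and the chain ``weak limit $\Rightarrow$ minimizer'' is not closed. A related soft spot: a Brezis--Lieb splitting for the term $\Im\int|v|^2\bar v v_x\,dx$ is not off-the-shelf, since it is not of the form $\int F(v_n)\,dx$ and involves the derivative; you assert it without indicating how to obtain it.

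The paper closes both gaps with one structural idea that your proposal is missing: the relaxed characterization $m_{\mathcal N}=\tfrac14\inf\{L(v):\,v\neq 0,\ I(v)\leq 0\}$, combined with the algebraic rewriting \eqref{eq:conv3} of $I$ as $\norm*{v_x-\tfrac{ic}{2}v+\tfrac i2|v|^2v}_{L^2}^2+(\omega-\tfrac{c^2}{4})\norm{v}_{L^2}^2+\tfrac c2\norm{v}_{L^4}^4-\tfrac14\norm{v}_{L^6}^6$. The latter reduces the splitting of $I$ along the minimizing sequence to a genuine Brezis--Lieb statement (an $L^2$ norm of $w_n=\partial_xv_n-\tfrac{ic}2v_n+\tfrac i2|v_n|^2v_n$ plus $L^4$ and $L^6$ norms), and the former turns the splitting into a proof that $I(v_\infty)\leq 0$ is forced (if $I(v_\infty)>0$ then $I(v_n-v_\infty)\to-I(v_\infty)<0$, so $L(v_n-v_\infty)>4m_{\mathcal N}$, contradicting $L(v_\infty)>0$ and $L(v_n)\to 4m_{\mathcal N}$), after which $L(v_\infty)=4m_{\mathcal N}$, strong convergence, and $I(v_\infty)=0$ follow with no subadditivity and no rescaling of the limit. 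If you incorporate these two ingredients in place of the dichotomy/rescaling step, your argument becomes complete and coincides with the paper's proof.
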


Proposition~\ref{prop:nehari} was first obtained by Colin and Ohta
\cite[Lemma 10]{CoOh06}. For the sake of completeness, we reproduce
here the proof. 

We will make use of the following two classical lemmas (see~\cite{BrLi83,FrLiLo86,Li83}). 

\begin{lemma}\label{lem:lieb}
  Let $(v_n)$ be a bounded sequence in $H^1(\R)$. Assume that there
  exists $p\in(2,\infty)$ such that
  \[
  \limsup_{n\to\infty}\norm{v_n}_{L^p}>0.
  \]
  Then there exist $(y_n)\subset \R$ and $v_\infty\in
  H^1(\R)\setminus\{0\}$ such that $(v_n(\cdot-y_n))$ has a convergent
  subsequence to $v_\infty$ weakly in $H^1(\R)$.
\end{lemma}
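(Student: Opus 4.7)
The plan is to follow the classical concentration-compactness-type argument. The core ingredient is a \emph{vanishing lemma}: for $p\in(2,\infty)$ and $v\in H^1(\R)$,
\[
\|v\|_{L^p(\R)}^{p}\leq C\,\Bigl(\sup_{y\in\R}\|v\|_{L^2([y,y+1])}\Bigr)^{(p-2)/2}\|v\|_{H^1(\R)}^{(p-2)/2}\|v\|_{L^2(\R)}^{2}.
\]
I would prove this by decomposing $\R=\bigcup_{k\in\mathbb{Z}}I_k$ with $I_k=[k,k+1]$, applying H\"older on each $I_k$ to get $\|v\|_{L^p(I_k)}^p\leq\|v\|_{L^\infty(I_k)}^{p-2}\|v\|_{L^2(I_k)}^{2}$, and invoking the one-dimensional Gagliardo-Nirenberg inequality $\|v\|_{L^\infty(I_k)}^2\leq C\|v\|_{L^2(I_k)}\|v\|_{H^1(I_k)}$. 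Pulling the $\sup_k$ outside, bounding $\sup_k\|v\|_{H^1(I_k)}\leq\|v\|_{H^1(\R)}$ (the intervals being disjoint), and summing $\sum_k\|v\|_{L^2(I_k)}^2\leq\|v\|_{L^2(\R)}^2$ yields the inequality.

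Granted the vanishing lemma, boundedness of $(\|v_n\|_{H^1})$ together with the hypothesis $\limsup_n\|v_n\|_{L^p}>0$ force $\limsup_n\sup_{y\in\R}\|v_n\|_{L^2([y,y+1])}>0$. Extracting a subsequence, I obtain $\eta>0$ and points $y_n\in\R$ such that $\|v_n\|_{L^2([y_n,y_n+1])}\geq\eta$. Setting $w_n(x)=v_n(x+y_n)$, the translated sequence is still bounded in $H^1(\R)$ and satisfies $\|w_n\|_{L^2([0,1])}\geq\eta$. Up to a further subsequence, weak compactness in the reflexive space $H^1(\R)$ provides $v_\infty\in H^1(\R)$ with $w_n\rightharpoonup v_\infty$. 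The one-dimensional compact embedding $H^1([-M,M])\hookrightarrow L^2([-M,M])$ upgrades this to strong convergence in $L^2_{\mathrm{loc}}(\R)$, so
\[
\|v_\infty\|_{L^2([0,1])}=\lim_{n\to\infty}\|w_n\|_{L^2([0,1])}\geq\eta>0,
\]
whence $v_\infty\neq 0$. Renaming $-y_n$ as the shift appearing in the statement, the subsequence $v_n(\cdot-(-y_n))=w_n$ converges weakly to $v_\infty$ in $H^1(\R)$ as required.

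The only non-routine step is the vanishing lemma itself; the weak-compactness and Rellich-Kondrachov arguments are entirely standard. The proof uses no feature of \eqref{eq:nls} and follows exactly the spirit of the classical references \cite{BrLi83,FrLiLo86,Li83} already cited just before the statement.
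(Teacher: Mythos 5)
Your proof is correct. Note that the paper itself gives no argument for this lemma: it is stated as a classical result and attributed to the references \cite{BrLi83,FrLiLo86,Li83}, so there is no ``paper proof'' to compare against line by line. What you supply is exactly the standard concentration/vanishing argument behind those references: the unit-interval decomposition with H\"older, $\|v\|_{L^p(I_k)}^p\leq\|v\|_{L^\infty(I_k)}^{p-2}\|v\|_{L^2(I_k)}^2$, combined with the one-dimensional bound $\|v\|_{L^\infty(I_k)}^2\lesssim\|v\|_{L^2(I_k)}\|v\|_{H^1(I_k)}$ and summation over $k$, gives the vanishing inequality; non-vanishing of the $L^p$ norms then produces the translations $y_n$ with a uniform $L^2$ lower bound on a unit interval, and weak $H^1$ compactness plus the Rellich--Kondrachov embedding on bounded intervals shows the weak limit of the translated subsequence is nonzero. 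All the individual steps check out (including the sign convention, since $v_n(x+y_n)=v_n(x-(-y_n))$ matches the statement after relabelling the shifts), so your write-up is a complete, self-contained substitute for the citation; its only cost is the extra page of standard material the authors chose to outsource.
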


\begin{lemma}\label{lem:brezis-lieb}
  Let $2\leq p < \infty$ and $(v_n)$ be a bounded sequence in
  $L^p(\R)$. Assume that $v_n\to v_\infty$ a.e. in $\R$. Then 
  \[
  \norm{v_n}_{L^p}^p-\norm{v_n-v_\infty}_{L^p}^p-\norm{v_\infty}_{L^p}^p\to
  0,\quad\text{ as }n\to\infty.
  \]
\end{lemma}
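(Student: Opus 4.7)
The plan is to reduce the statement to a pointwise inequality and a dominated convergence argument, which is the classical Brezis--Lieb strategy. First I would establish the elementary pointwise inequality: for every $\varepsilon>0$ there exists $C_\varepsilon>0$ such that
\[
\bigl||a+b|^p-|a|^p-|b|^p\bigr|\leq \varepsilon|a|^p+C_\varepsilon|b|^p\quad\text{for all }a,b\in\mathbb{C}.
\]
This follows from the fact that the continuous function $(a,b)\mapsto |a+b|^p-|a|^p-|b|^p$ is $p$-homogeneous, so after dividing by $|a|^p$ (for $a\neq 0$) it becomes a function of $b/a$ which is $O(|b/a|)$ near $0$ and $O(|b/a|^p)$ at infinity; the inequality then follows by splitting into the regions $|b|\leq \eta|a|$ and $|b|>\eta|a|$ with $\eta$ small depending on $\varepsilon$.

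Next I would apply this with $a=v_n-v_\infty$ and $b=v_\infty$, so that $a+b=v_n$, to obtain the integrand bound
\[
f_n(x):=\bigl||v_n|^p-|v_n-v_\infty|^p-|v_\infty|^p\bigr|(x)\leq \varepsilon|v_n-v_\infty|^p(x)+C_\varepsilon|v_\infty|^p(x).
\]
Note that since $v_n\to v_\infty$ a.e.\ and $t\mapsto |t|^p$ is continuous, $f_n\to 0$ a.e. Set
\[
W_n^\varepsilon(x):=\max\bigl(f_n(x)-\varepsilon|v_n-v_\infty|^p(x),0\bigr).
\]
Then $0\leq W_n^\varepsilon\leq C_\varepsilon|v_\infty|^p\in L^1(\mathbb{R})$ (since Fatou applied to $v_n\to v_\infty$ gives $v_\infty\in L^p(\mathbb{R})$), and $W_n^\varepsilon\to 0$ a.e. By the Lebesgue dominated convergence theorem, $\int_{\mathbb{R}} W_n^\varepsilon\,dx\to 0$ as $n\to\infty$.

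Finally, using $f_n\leq W_n^\varepsilon+\varepsilon|v_n-v_\infty|^p$ and the uniform bound $M:=\sup_n\|v_n-v_\infty\|_{L^p}^p<\infty$ (a consequence of the assumed boundedness of $(v_n)$ in $L^p$ via the triangle inequality, plus $v_\infty\in L^p$), I would conclude
\[
\limsup_{n\to\infty}\int_{\mathbb{R}}f_n\,dx\leq \varepsilon M.
\]
Letting $\varepsilon\to 0$ gives $\int f_n\,dx\to 0$, which is precisely the claimed convergence. The main obstacle is essentially the pointwise inequality in the first step; once that homogeneity/scaling argument is carried out cleanly, the remainder is a textbook truncation plus dominated convergence. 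I should also note that although the paper only needs this for $p\in[2,\infty)$, the argument works for all $p\in[1,\infty)$; the boundedness of $(v_n)$ in $L^p$ (rather than just in $H^1$) is what makes the dominated convergence step work uniformly in $n$.
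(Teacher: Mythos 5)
Your proof is correct: the pointwise inequality $\bigl||a+b|^p-|a|^p-|b|^p\bigr|\leq \varepsilon|a|^p+C_\varepsilon|b|^p$, the truncation $W_n^\varepsilon$, and the dominated convergence step are exactly the classical Brezis--Lieb argument, and each step as you sketch it goes through (including the homogeneity/scaling justification of the pointwise bound and the use of Fatou to get $v_\infty\in L^p$). The paper itself offers no proof of this lemma --- it is quoted as a classical result with a reference to Brezis and Lieb --- and your argument is precisely the proof from that reference, so there is nothing further to compare.
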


\begin{proof}[Proof of Proposition~\ref{prop:nehari}]
  \emph{Step 1. We show that $m_{\mathcal
      N}>0$.} Let $v\in H^1(\R)\setminus\{0\}$ be such that
  $I(v)=0$. Introduce the notation 
  \[
  L(v)=L_{\omega,c}(v)=\norm{v_x}_{L^2}^2+\omega
  \norm{v}_{L^2}^2+c\Im\int_{\R} v\bar
  v_xdx
  \]
  and remark that
  \[
  L(v)
  =\norm*{v_x-\frac{ic}{2}v}_{L^2}^2+\left(\omega-\frac
    {c^2}{4}\right)\norm{v}_{L^2}^2.
  \]
  Using $I(v)=0$, we have
  \[
  \norm{v}_{H^1}^2\lesssim \norm*{v_x-\frac{ic}{2}v}_{L^2}^2+\left(\omega-\frac
    {c^2}{4}\right)\norm{v}_{L^2}^2=-\Im\int_{\R}|v|^2\bar v v_xdx\lesssim \norm{v}_{H^1}^4.
  \]
  Therefore there exists $\delta>0$ independant of $v$ such that 
  \[
  \norm{v}_{H^1}^2>\delta.
  \]
  In addition, we have
  \[
  S(v)=S(v)-\frac14I(v)=\frac14L(v)\gtrsim \norm{v}_{H^1}^2>\delta>0.
  \]
  Therefore 
  \[
  m_{\mathcal N}>0.
  \]

  \emph{Step 2. We show that }
  \[
  m_{\mathcal N}=\frac14\inf\{L(v);\, v\in
  H^1(\R)\setminus\{0\},\,I(v)\leq 0\}.
  \]
  Indeed, let $v\in
  H^1(\R)\setminus\{0\}$ be such that $I(v)< 0$. Then there exists
  $\lambda\in(0,1)$ such that $I(\lambda v)=0$. Moreover, 
  \[
  m_{\mathcal N}\leq S(\lambda v)=S(\lambda v)-\frac14 I(\lambda v)= \frac 14 L(\lambda v)=\frac{\lambda^2}{4}L(v)<\frac 14 L(v).
  \]

  \emph{Step 3. We show convergence of the minimizing sequences.} 
  Let $(v_n)\subset H^1(\R)\setminus\{0\}$ be such that
  $I(v_n)=0$ for all $n\in\mathbb N$ and $S(v_n)\to m_{\mathcal N}$ as
  $n\to\infty$.
  In the sequel, all statements will be true up to the extraction of a
  subsequence. 
  The sequence $(v_n)$ is bounded from above and below in $H^1(\R)$. Moreover, we claim that 
  \[
  \limsup_{n\to\infty}\norm{v_n}_{L^6}>0. 
  \]
  Indeed, assume by contradiction that
  $\lim_{n\to\infty}\norm{v_n}_{L^6}=0$. Then from $I(v_n)=0$ and by
  Cauchy-Schwartz inequality, we have
  \[
  \norm{v_n}_{H^1}^2\lesssim L(v_n)
  =\abs*{-\Im\int_{\R}|v_n|^2\bar v_n\partial_x
    v_ndx}\lesssim \norm{v_n}_{L^6}^3\norm{\partial_xv_n}_{L^2}\to 0.
  \]
  It is a contradiction with the boundedness from below of $(v_n)$ in
  $H^1(\R)$. Therefore $\limsup_{n\to\infty}\norm{v_n}_{L^6}>0$ and we
  can apply Lemma~\ref{lem:lieb} to obtain the existence of
  $v_\infty\in H^1(\R)\setminus\{0\}$ and $(y_n)\subset \R$ such that
  \[
  v_n(\cdot-y_n)\rightharpoonup v_\infty \text{ weakly in }H^1(\R),\quad
  v_n(\cdot-y_n)\to v_\infty\text{ a.e.}
  \]
  From now on, we replace $v_n$ by $v_n(\cdot-y_n)$. 
  By weak convergence we have 
  \begin{equation}
    \label{eq:ll}
    L(v_n)-L(v_n-v_\infty)-L(v_\infty)\to0.
  \end{equation}
  By Lemma
~\ref{lem:brezis-lieb} we have
  \begin{align}
    \norm{v_n}_{L^4}^4-\norm{v_n-v_\infty}_{L^4}^4-\norm{v_\infty}_{L^4}^4&\to0,\label{eq:conv1}\\
    \norm{v_n}_{L^6}^6-\norm{v_n-v_\infty}_{L^6}^6-\norm{v_\infty}_{L^6}^6&\to 0.\label{eq:conv2}
  \end{align}
  Remark that for any $v\in H^1(\R)$ we can rewrite $I(v)$ as
  \begin{equation}\label{eq:conv3}
    I(v)=\norm*{v_x-\frac{ic}{2}v+\frac{i}{2}|v|^2v}_{L^2}^2+\left(\omega-\frac
      {c^2}{4}\right)\norm{v}_{L^2}^2+\frac{c}{2}\norm{v}_{L^4}^4-\frac{1}{4}\norm{v}_{L^6}^6.
  \end{equation}
  Introduce the functions $w_n$ and $w_\infty$ defined by
  \[
  w_n=\partial_xv_n-\frac{ic}{2}v_n+\frac{i}{2}|v_n|^2v_n,\quad
  w_\infty=\partial_xv_\infty-\frac{ic}{2}v_\infty+\frac{i}{2}|v_\infty|^2v_\infty.
  \]
  Then $w_n\rightharpoonup w_\infty$ in $L^2(\R)$ and we have
  \[
  \norm{w_n}_{L^2}^2-\norm{w_n-w_\infty}_{L^2}^2-\norm{w_\infty}_{L^2}^2\to 0.
  \]
  Combined with~\eqref{eq:conv1},\eqref{eq:conv2} and~\eqref{eq:conv3},
  this gives
  \begin{equation}\label{eq:conv4}
    I(v_n)-I(v_n-v_\infty)-I(v_\infty)\to 0.
  \end{equation}

  \emph{Step 4. We show that the minimal element $v_\infty$ verifies $I(v_\infty)\leq 0$.} Assume by contradiction that
  $I(v_\infty)> 0$. From~\eqref{eq:conv4}
  and since $I(v_n)=0$, we have
  \[
  \lim_{n\to\infty}I(v_n-v_\infty)=-I(v_\infty)<0.
  \]
  Therefore, by Step 2, $L(v_n-v_\infty)>4m_{\mathcal N}$. Combining this with
  $\lim_{n\to\infty}L(v_n)=4m_{\mathcal N}$ and~\eqref{eq:ll}, we obtain
  \[
  L(v_\infty)=\lim_{n\to\infty}(L(v_n)-L(v_n-v_\infty))\leq 0.
  \]
  However $v_\infty\neq 0$ and thus $L(v_\infty)>0$, which is a
  contradiction. Therefore $I(v_\infty)\leq 0$. 

  \emph{Step 5. We prove that $v_\infty$ achieves the minimum for
    $m_{\mathcal N}$.} 
  We have
  \[
  4m_{\mathcal N}\leq L(v_\infty)\leq
  \lim_{n\to\infty}L(v_n)=4m_{\mathcal N}.
  \]
  Therefore 
  \[
  L(v_\infty)=
  \lim_{n\to\infty}L(v_n)
  \]
  and thus $v_n\to v_\infty$ strongly in $H^1(\R)$. This implies
  $I(v_\infty)=0$ and therefore 
  \[
  S(v_\infty)=m_{\mathcal N}.
  \]

  \emph{Step 6. Conclusion.} The last step of the proof consists in proving that $v_\infty$ is in
  fact a solution of~\eqref{eq:profile}. As $v_\infty$ is a minimizer
  for $m_{\mathcal N}$, there exists a Lagrange multiplier $\lambda\in\R$
  such that 
  \[
  S'(v_\infty)=\lambda I'(v_\infty). 
  \]
  Therefore
  \[
  0=I(v_\infty)=\dual{S'(v_\infty)}{v_\infty}=\lambda \dual{I'(v_\infty)}{v_\infty}.
  \]
  Since 
  \[
  \dual{I'(v_\infty)}{v_\infty}=2L(v_\infty)+4\Im\int|v_\infty|^2\bar
  v_\infty\partial_x v_\infty dx=-2L(v_\infty)<0,
  \]
  we necessarily have $\lambda=0$. This implies that $v_\infty$ is a
  solution of~\eqref{eq:profile}. Since by uniqueness any solution of
~\eqref{eq:profile}  can be written for some $\theta,y\in\R$ as 
  \[
  v_\infty=e^{i\theta}\phi_{\omega,c}(\cdot-y),
  \]
  this concludes the proof of Proposition~\ref{prop:nehari}. 
\end{proof}

With Proposition~\ref{prop:nehari} in hand, we can now proceed to the
proof of Proposition~\ref{prop:3}.

\begin{proof}[Proof of Proposition~\ref{prop:3}]
  For simplicity in notation we drop the subscript ${\omega,c}$ in
  $\phi_{\omega,c}$ and simply write $\phi$ instead. We shall follow
  more or less the scheme of proof already used in~\cite{BeGhLe14}. 
  We start by rewriting $S''(\phi)$ (see~\eqref{eq:explicit}) as a two by two matrix
  operator acting on $(\Re(\eps),\Im(\eps))$:
  \[
  S''(\phi)=
  \begin{pmatrix}
    -\partial_{xx}+\omega+2\Re(\phi)\Im(\phi_x)&2\Im(\phi)\Im(\phi_x)+|\phi|^2\partial_x-c\partial_x\\
    -2\Re(\phi)\Re(\phi_x)-|\phi|^2\partial_x+c\partial_x&-\partial_{xx}+\omega-2\Im(\phi)\Re(\phi_x)
  \end{pmatrix}.
  \]

  \emph{Step 1. Global spectral picture.} 
  Since $\phi$ is exponentially localized, $S''(\phi)$ can be considered
  as compact perturbation of 
  \[
  \begin{pmatrix}
    -\partial_{xx}+\omega&-c\partial_x\\
    c\partial_x&-\partial_{xx}+\omega
  \end{pmatrix}.
  \]
  Therefore its essential spectrum is $[\omega-\frac{c^2}{4},\infty)$ and
  by Weyl's Theorem its spectrum in $(-\infty,\omega-\frac{c^2}{4})$ consists of isolated
  eigenvalues. Due to the variational characterization Proposition
~\ref{prop:nehari}, $S''(\phi)$ admits at most one negative
  eigenvalue. Using that $\phi$ satisfies to the Nehari constraint
  $I(\phi)=0$, we have
  \[
  \dual{S''(\phi)\phi}{\phi}=2\Im\int_{\R}|\phi|^2\bar \phi
  \phi_xdx=-2L(\phi)<0.
  \]
  This implies that
  the operator $S''(\phi)$ has exactly one negative eigenvalue. 

  \emph{Step 2. Non-degeneracy of the kernel.} 
  We claim that $\ker(S''(\phi))=\spn\{ i\phi,\phi_x \}$.
  Write 
  \[
  \eps=\exp\left(i\left(\frac
      c2x-\frac14\int_{-\infty}^x|\phi|^2dy\right)\right)\left(k-\frac{i\varphi}{2}\int_{-\infty}^x\varphi\Re(k)dy\right),
  \]
  where $\varphi=\varphi_{\omega,c}$ is the real part of $\phi$ given
  explicitly in formula~\eqref{eq:varphi-formula}.
  Then 
  \begin{multline*}
    S''(\phi)\eps=-k_{xx}+\left(\omega-\frac{c^2}{4}\right) k
    +i \frac12\varphi\varphi_x \Im(k)
    -i\frac12\varphi^2\Im(k_x)\\
    +\frac
    c2\varphi^2k
    +c\varphi^2\Re(k)
    -\frac{3}{16}\varphi^4k
    -\frac{12}{16}\varphi^4\Re(k).
  \end{multline*}
  Separating in real and imaginary part, we obtain
  \[
  S''(\phi)\eps =L_+\Re(k)+iL_-\Im(k),
  \]
  where 
  \begin{align*}
    L_+&=-\partial_{xx}+\left(\omega-\frac{c^2}{4}\right)
         +\frac{3c}{2}\varphi^2-\frac{15}{16}\varphi^4,\\
    L_-&=-\partial_{xx}+\left(\omega-\frac{c^2}{4}\right)
         +\frac{c}{2}\varphi^2-\frac{3}{16}\varphi^4+\frac{\varphi\varphi_x}{2}-\frac{\varphi^2}{2}\partial_x.
  \end{align*}
  Hence proving non-degeneracy for $S''(\phi)$ amounts to proving
  non-degeneracy of $L_+$ and $L_-$. That is, we want to prove that
  \[
  \ker(L_+)=\spn\{\varphi_x\},\quad \ker(L_-)=\spn\{\varphi\}.
  \]
  Since $\varphi$ satisfies~\eqref{eq:varphi}, it is clear that $L_-\varphi=0$. Let $v\in H^2(\R)\setminus\{0\}$ be
  such that $L_-v=0$. Consider the Wronskian of $\varphi$ and $v$:
  \[
  W=\varphi_xv-\varphi v_x.
  \]
  It verifies the equation
  \[
  W'=-\frac{\varphi^2}{2}W
  \] 
  and therefore it is of the form
  \[
  W(x)=Ce^{-\frac12\int_0^x\varphi^2(y)dy}.
  \]
  Since $\varphi,v\in H^2(\R)$, we have
  \[
  \lim_{x\to\pm\infty} \varphi(x)=
  \lim_{x\to\pm\infty}  \varphi_x(x)=
  \lim_{x\to\pm\infty} v(x)=
  \lim_{x\to\pm\infty} v_x(x)=0.
  \]
  Therefore, 
  \[
  \lim_{x\to\pm\infty}W(x)=0,
  \]
  which is possible only if $W\equiv0$. Therefore $v\in\spn\{\varphi\}$
  and this proves non-degeneracy of $L_-$. The non-degeneracy of $L_+$
  follows from similar arguments.

  \emph{Step 3. Construction of a negative direction.} 
  Differentiating~\eqref{eq:profile} with respect to $\omega$ and $c$,
  we observe that
  \[
  S''(\phi)\partial_\omega\phi=-\phi,\quad S''(\phi)\partial_c\phi=-i\phi_x.
  \] 
  Let $\mu\in\R$ to be chosen later and define
  \[
  \psi=\partial_\omega\phi+\mu \partial_c\phi.
  \]
  We have
  \[
  \dual{S''(\phi)\psi}{\psi}=
  -\dual{\phi}{\partial_\omega\phi}
  -\mu\dual{\phi}{\partial_c\phi}
  -\mu\dual{i\phi_x}{\partial_\omega\phi}
  -\mu^2\dual{i\phi_x}{\partial_c\phi}.
  \]
  Moreover, using~\eqref{FM:Mass-dw}-\eqref{FM:Mom-c}, we get
  \begin{align*}
    \dual{\phi}{\partial_\omega\phi}&=\frac{\partial}{\partial\omega}M(\phi_{\omega,c})=\frac{-\frac{c}{\omega}}{\sqrt{4\omega-c^2}},&
                                                                                                                                       \dual{\phi}{\partial_c\phi}&=\frac{\partial}{\partial c}M(\phi_{\omega,c})=\frac{2}{\sqrt{4\omega-c^2}},\\
    \dual{i\phi_x}{\partial_\omega\phi}&=\frac{\partial}{\partial \omega}P(\phi_{\omega,c})=\frac{2}{\sqrt{4\omega-c^2}},&
                                                                                                                           \dual{i\phi_x}{\partial_c\phi}&=\frac{\partial}{\partial
                                                                                                                                                           c}P(\phi_{\omega,c})=\frac{-c}{\sqrt{4\omega-c^2}}.
  \end{align*}
  This gives
  \begin{equation}\label{eq:recall}
    \dual{S''(\phi)\psi}{\psi}=\left( \frac{c}{\omega}-4\mu+c\mu^2 \right) \frac{1}{\sqrt{4\omega-c^2}}.
  \end{equation}
  Therefore, since $4\omega-c^2>0$, there always exists $\mu$ such that 
  \[
  \dual{S''(\phi)\psi}{\psi}<0.
  \]
  Let such a $\mu$ be fixed now. 
  If $c<0$, we can choose $\mu=0$. If $c= 0$, we can choose
  $\mu=1$ and if $c>0$ we can choose $\mu=\frac{2}{c}$. 

  \emph{Step 4. Positivity.} 
  Let us now denote by $-\lambda$ and $\xi$ the negative eigenvalue of
  $S''(\phi)$ and its corresponding normalized eigenvector, i.e.
  \[
  S''(\phi)\xi=-\lambda\xi,\quad\norm{\xi}_{L^2}=1.
  \]
  We write the decomposition of $\psi$ along the spectrum of
  $S''(\phi)$:
  \[
  \psi=\alpha \xi+\zeta+\eta,
  \]
  with $\alpha\in\R$, $\alpha\neq0$, $\zeta\in\ker(S''(\phi))$ and $\eta$ in the positive eigenspace
  of $S''(\phi)$. In particular, we have
  \[
  \dual{S''(\phi)\eta}{\eta}\gtrsim \norm{\eta}_{L^2}^2.
  \]
  Take $\eps\in H^1(\R)\setminus\{0\}$ such that the following orthogonality
  conditions hold
  \[
  \psld{\eps}{i\phi}=
  \psld{\eps}{\partial_x\phi}=
  \psld{\eps}{\phi+i\mu\phi_x}=
  0.
  \]
  We also write the  decomposition of $\eps$ along the spectrum of
  $S''(\phi)$:
  \[
  \eps=\beta \xi+\rho,
  \]
  with $\rho$ in the positive eigenspace of $S''(\phi)$. Since 
  \[
  -\phi-i\mu \phi_x=S''(\phi)\psi= -\alpha \lambda\xi+S''(\phi)\eta ,
  \] 
  we have
  \[
  0=-\psld{\phi+i\mu \phi_x}{\eps}=\dual{S''(\phi)\psi}{\eps}=-\alpha\beta\lambda+\dual{S''(\phi)\eta}{\rho},
  \]
  thus
  \[
  \dual{S''(\phi)\eta}{\rho}=\alpha\beta\lambda.
  \]
  From Cauchy-Schwartz inequality, we have
  \[
  (\alpha\beta\lambda)^2=\dual{S''(\phi)\eta}{\rho}^2\leq \dual{S''(\phi)\eta}{\eta}\dual{S''(\phi)\rho}{\rho}.
  \]
  In addition, since $\dual{S''(\phi)\psi}{\psi}<0$, we have
  \[
  \dual{S''(\phi)\eta}{\eta}<\alpha^2 \lambda.
  \]
  Therefore, 
  \[
  \dual{S''(\phi)\eps}{\eps}=-\beta^2\lambda
  +\dual{S''(\phi)\rho}{\rho}\geq -\beta^2\lambda+\frac{(\alpha\beta\lambda)^2}{\dual{S''(\phi)\eta}{\eta}}>-\beta^2\lambda+\frac{(\alpha\beta\lambda)^2}{\alpha^2\lambda}=0.
  \]

  \emph{Step 5. Coercivity.} 
  To obtain the desired coercivity property, we argue by
  contradiction. Let $\eps_n$ be such that $\norm{\eps_n}_{H^1}=1$ and 
  \[
  \lim_{n\to\infty}\dual{S''(\phi)\eps_n}{\eps_n}=0.
  \]
  By boundedness, there exists $\eps_\infty\in H^1(\R)$ such that 
  \[
  \eps_n\rightharpoonup  \eps_{\infty} \text{ weakly in }H^1(\R). 
  \]
  On one hand, by weak convergence, $\eps_\infty$ verifies the orthogonality
  conditions
  \[
  \psld{\eps_\infty}{i\phi_{\omega,c}}=
  \psld{\eps_\infty}{\partial_x\phi_{\omega,c}}=
  \psld{\eps_\infty}{\phi_{\omega,c}+i\mu\phi_x}=
  0.
  \]
  In particular, if $\eps_\infty \not \equiv 0$, then, by Step 4, we have
  \[
  \dual{S''(\phi)\eps_\infty}{\eps_\infty}>0.
  \]
  On the other hand, we remark that $\dual{S''(\phi)\eps}{\eps} =H(\eps)$ can be written
  \[
  H(\eps)=\norm*{\eps_x-i\frac c2\eps}_{L^2}^2+\left(\omega-\frac{c^2}{4}\right)\norm{\eps}_{L^2}^2+\Im\int_\R|\phi|^2\bar\eps
  \eps_xdx+2\int_\R\Re(\phi \bar \eps)\Im(\phi_x \bar\eps)dx.
  \]
  By weak convergence of $\eps_n$, exponential localization of $\phi$ and $\phi_x$ and
  compactness of the injection of $H^1$ into $L^2$ for
  bounded domain, we
  have 
  \[
  \dual{S''(\phi)\eps_\infty}{\eps_\infty}\leq \lim_{n\to\infty}\dual{S''(\phi)\eps_n}{\eps_n}=0.
  \]
  Therefore we must have $\eps_\infty=0$. Since
  $\norm{\eps_n}_{H^1}=1$, there exists $\delta>0$ such that
  \[
  \norm*{\partial_x\eps_n-i\frac c2\eps_n}_{L^2}^2+\left(\omega-\frac{c^2}{4}\right)\norm{\eps_n}_{L^2}^2>\delta.
  \]
  Moreover, since $\lim_{n\to\infty}\dual{S''(\phi)\eps_n}{\eps_n}=0$,
  we have
  \[
  \Im\int_\R|\phi|^2\eps_\infty
  (\bar\eps_\infty)_xdx+2\int_\R\Re(\phi \bar \eps_\infty)\Im(\bar\phi_x \eps_\infty)dx<-\delta<0.
  \]
  However, it is a contradiction with $\eps_\infty=0$. Hence the
  coercivity result Proposition~\ref{prop:3} holds. 
\end{proof}

\section{The Bootstrap Argument}
\label{sec:bootstrap-argument}

This section is devoted to the proof of Theorem~\ref{thm:1} using a
bootstrap argument which will be proved later.

Let $c_1<\cdots<c_N$ and $0<\omega_1<\dots<\omega_N$ be such
that $c_j^2<4\omega_j$
for
$j=1,\dots,N$ and the speed-frequency ratio assumption
\eqref{eq:speed-frequency-ratios} is verified.
Let $(\phi_j)=(\phi_{\omega_j,c_j})$ be the corresponding solitons profiles.
We define the minimal decay rate of the profiles by
\begin{equation}
  \label{eq:def-omega}
  \omega_\star=\min\left\{ \sqrt{\omega_j-\frac{c_j^2}{4}};\;j=1,\dots,N \right\}.
\end{equation}
Define also the \emph{minimal relative speed} $c_\star$ by
\begin{equation}\label{eq:c_star}
  c_\star=\min\left\{ |c_j-c_k|;\;j,k=1,\dots,N,\,j\neq k\right\}.
\end{equation}

Given $A_0,L,\delta>0$, define a tubular neighborhood of the
$N$-soliton profiles by 
\begin{multline*}
  \mathcal V(\delta,L,A_0)\\=\left\{ u\in
    H^1(\R);\;\inf_{\substack{x_j>x_{j-1}+L\\\theta_j\in\R}}\norm[\bigg]{u-\sum_{j=1}^Ne^{i\theta_j}\phi_j(\cdot-x_j
      )}_{H^1}<A_0\left(\delta+e^{-\frac{1}{32}\omega_\star L}\right)\right\}.
\end{multline*}

Theorem~\ref{thm:1} is a straightforward consequence of the following
bootstrap result.

\begin{proposition}
  [Bootstrap]\label{prop:bootstrap}
  There exist $A_0>1$, fixed, and $L_0>0$ and $\delta_0>0$ such that for all
  $L>L_0$, $0<\delta<\delta_0$ the following property is satisfied. 
  If $u_0\in
  H^1(\R)$ verifies
  \[
  u_0\in \mathcal V(\delta,L,1),
  \]
  and if $t^\star>0$ is such that for all $t\in[0,t^\star]$ the
  solution $u$ of~\eqref{eq:nls} with $u(0)=u_0$ verifies
  \[
  u(t)\in \mathcal V(\delta,L,A_0),
  \]
  then for all $t\in[0,t^\star]$ we have
  \[
  u(t)\in \mathcal V\left(\delta,L,\frac{A_0}{2}\right).
  \]
\end{proposition}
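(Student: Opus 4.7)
The plan is to follow the Martel--Merle--Tsai bootstrap scheme, combining modulation, a localized Lyapunov functional, the coercivity result of Section~\ref{sec:solitons}, and monotonicity estimates. First, since $u(t)\in\mathcal V(\delta,L,A_0)$ on $[0,t^\star]$ with $\delta$ small and $L$ large, I would apply the modulation proposition of Section~\ref{sec:modulation} to decompose
\[
u(t)=R(t)+\eps(t),\qquad R(t)=\sum_{j=1}^N e^{i\tilde\theta_j(t)}\phi_j(\cdot-\tilde x_j(t)),
\]
where the $\mathcal C^1$ modulation parameters $(\tilde\theta_j,\tilde x_j)$ are chosen so that, at each soliton, $\eps(t)$ satisfies the three orthogonality conditions of Proposition~\ref{prop:3} (suitably translated and phase-shifted). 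This guarantees $\|\eps(t)\|_{H^1}\lesssim A_0(\delta+e^{-\omega_\star L/32})$ and that the solitons stay well separated, since under the bootstrap hypothesis the centers $\tilde x_j(t)$ remain ordered with gaps $\gtrsim L+c_\star t$.

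Next I would introduce a Lyapunov functional of the form
\[
\mathcal F(t)=E(u(t))+\sum_{j=1}^N\omega_j\, \mathcal M_j(t)+\sum_{j=1}^N c_j\,\mathcal P_j(t),
\]
where $\mathcal M_j$ and $\mathcal P_j$ are mass and momentum functionals localized by cut-offs moving at intermediate speeds $\sigma_j\in(c_{j-1},c_j)$ (this is exactly where the speed-frequency ratio assumption \eqref{eq:speed-frequency-ratios} enters). Section~\ref{sec:local-proc} provides the key monotonicity
\[
\mathcal F(t)\leq \mathcal F(0)+Ce^{-\alpha L},\qquad \alpha=\tfrac{1}{32}\omega_\star,
\]
thanks to the sign of $\sigma_j$ relative to the $c_j$'s and the exponential decay \eqref{eq:exp-decay}. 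On the other side, I would Taylor-expand $\mathcal F$ around $R(t)$: the linear term vanishes on the orthogonality subspace up to residual terms that are controlled, via Section~\ref{sec:contr-modul-param}, by the modulation equations (which give $\partial_t\tilde x_j\sim c_j$, $\partial_t\tilde\theta_j\sim\omega_j$ with errors $O(\|\eps\|_{H^1}+e^{-\alpha L})$). The quadratic term decomposes, up to exponentially small interaction between distinct solitons, into a sum $\sum_j H_{\omega_j,c_j}(\eps_j)$ which is coercive by Proposition~\ref{prop:3}. The cubic and quartic remainders are absorbed thanks to $\|\eps\|_{H^1}$ small. This yields the coercivity statement of Section~\ref{sec:line-acti-funct}:
\[
\mathcal F(u(t))-\mathcal F(R(t))\gtrsim \|\eps(t)\|_{H^1}^2-Ce^{-\alpha L}.
\]

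Combining monotonicity with coercivity, and using that at $t=0$ the initial data is $\delta$-close to the soliton sum so $\mathcal F(u(0))-\mathcal F(R(0))=O(\delta^2+\delta e^{-\alpha L})$, I obtain
\[
\|\eps(t)\|_{H^1}^2\leq C_{\star}\bigl(\delta^2+e^{-\alpha L}\bigr)
\]
for an absolute constant $C_\star$ independent of $A_0$, $\delta$, $L$. Fixing $A_0$ so that $A_0\geq 4\sqrt{C_\star}$ then forces $u(t)\in \mathcal V(\delta,L,A_0/2)$, closing the bootstrap. The main obstacle is the third step, specifically the control of the cross terms: the Taylor expansion of $\mathcal F$ at $R(t)$ produces interaction terms between distinct solitons (through the cut-offs in $\mathcal M_j$, $\mathcal P_j$ and through the cubic/quartic parts of $E$), and the linear part vanishes only modulo contributions driven by $\dot{\tilde x}_j-c_j$ and $\dot{\tilde\theta}_j-\omega_j$. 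Making these contributions smaller than the coercive quadratic bound on $\eps$, uniformly in $t$, is what requires the precise modulation equations of Section~\ref{sec:contr-modul-param} together with the monotonicity functionals of Section~\ref{sec:local-proc}, and this synchronized control is the delicate point of the argument.
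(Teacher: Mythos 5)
Your overall scheme (modulation, localized Lyapunov functional, monotonicity, coercivity, absorption with a constant independent of $A_0$) is the right one, but there are two genuine gaps at exactly the points where the paper has to work hardest. First, you decompose $u=\sum_j e^{i\tilde\theta_j}\phi_j(\cdot-\tilde x_j)+\eps$ with \emph{fixed} profiles $\phi_j=\phi_{\omega_j,c_j}$, i.e. $2N$ modulation parameters, and then ask $\eps$ to satisfy the \emph{three} orthogonality conditions of Proposition~\ref{prop:3} at each soliton, i.e. $3N$ conditions. This is not possible: the Implicit Function Theorem can only kill as many conditions as you have parameters. The third condition $\psld{\eps}{\tilde R_j+i\mu_j\partial_x\tilde R_j}=0$ is precisely what neutralizes the unique negative eigenvalue of $S''_j$, so without it the quadratic form $\sum_j H_{\omega_j,c_j}$ is \emph{not} coercive and your step ``the quadratic term is coercive by Proposition~\ref{prop:3}'' fails. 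This is why Proposition~\ref{prop:modulation} modulates additionally in the frequencies $\tilde\omega_j$ (with $\tilde c_j$ slaved to $\tilde\omega_j$ through $\tilde c_j-c_j=\mu_j(\tilde\omega_j-\omega_j)$), giving $3N$ parameters, a non-diagonal Jacobian whose invertibility is checked by explicit computation, and modulated profiles $\phi_{\tilde\omega_j(t),\tilde c_j(t)}$ rather than the original ones.

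Second, once the frequencies are modulated (as they must be), the expansion of the localized action produces terms $\sum_j O(|\tilde\omega_j(t)-\tilde\omega_j(0)|^2)$, and controlling these is the crux of the proof; your proposal attributes this control to ``the modulation equations $\partial_t\tilde x_j\sim c_j$, $\partial_t\tilde\theta_j\sim\omega_j$ with errors $O(\|\eps\|_{H^1}+e^{-\alpha L})$'', but those dynamical estimates cannot be integrated over an unbounded time interval without the errors accumulating linearly in $t$, so they give no uniform bound on $|\tilde\omega_j(t)-\tilde\omega_j(0)|$. The paper instead proves Lemma~\ref{lem:control-modulation} by an almost-conservation argument for the quantities $Q_j=M+\mu_j P$ evaluated on $\tilde R_j$: one extracts $\int q_j(u)\chi_j\,dx$ from differences of the monotone localized functionals $\mathcal I_{j,\tau}$ taken at several auxiliary speeds $\tau_j^1,\dots,\tau_j^4$ (Proposition~\ref{prop:monotonicity-three}, Claims~\ref{claim:split} and~\ref{claim:Qj}), and then uses the non-degeneracy $\frac{d}{d\tilde\omega_j}Q_j(\phi_{\tilde\omega_j,\tilde c_j})\neq 0$ from~\eqref{23.03} to convert this into quadratic control of $\tilde\omega_j(t)-\tilde\omega_j(0)$. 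Nothing in your outline supplies this mechanism. A further, more minor, inaccuracy: the monotonicity of Section~\ref{sec:local-proc} is not $\mathcal I(t)\leq\mathcal I(0)+Ce^{-\alpha L}$ but carries an unavoidable error $\frac1L\sup_s\|\eps(s)\|_{L^2}^2$, which is exactly what the factor $1/L$ and the bootstrap absorption ($\|\eps(t)\|_{H^1}^2\leq\frac12\sup_s\|\eps(s)\|_{H^1}^2+\cdots$) are designed to handle; your final inequality should be derived through that absorption rather than asserted directly.
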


Being performing the proof of Proposition~\ref{prop:bootstrap}, let us
indicate how it implies Theorem~\ref{thm:1}.

\begin{proof}[Proof of Theorem~\ref{thm:1}]
  Since $u_0\in\mathcal V(\delta,L,1)$, and $u$ is continuous in
  $H^1(\R)$, there exists a maximal time
  $t^\star\in(0,\infty]$ such that for all $t\in[0,t^\star)$ we have 
  \[
  u(t)\in\mathcal V(\delta,L,A_0). 
  \] 
  Arguing by contradiction, we assume that $t^\star<\infty$. By
  Proposition~\ref{prop:bootstrap}, for all $t\in[0,t^\star)$
  we have 
  \[
  u(t)\in\mathcal V\left(\delta,L,\frac{A_0}{2}\right). 
  \] 
  By continuity of $u$ in $H^1(\R)$, there must exist
  $t^{\star\star}>t^\star$ such that for all $t\in[0,t^{\star\star})$ we have 
  \[
  u(t)\in\mathcal V(\delta,L,A_0). 
  \] 
  This however contradicts the maximality of $t^\star$. Hence
  $t^\star=\infty$. This concludes the proof.
\end{proof}

The rest of this paper is devoted to the proof of Proposition
\ref{prop:bootstrap}. 
From now on, we assume that we are given $A_0>1$, $L>L_0=L_0(A_0)>0$ and
$0<\delta<\delta_0=\delta_0(A_0)$ such that 
\[
u_0\in \mathcal V(\delta,L,1),
\]
and there exists $t^\star>0$ such that for all $t\in[0,t^\star]$ the
solution $u$ of~\eqref{eq:nls} with $u(0)=u_0$ verifies
\[
u(t)\in \mathcal V(\delta,L,A_0).
\]
In the sequel, we shall always assume  $t\in[0,t^\star]$.

\section{Modulation}
\label{sec:modulation}

We first explain how to decompose $u$ close to the sum of
solitons. Roughly speaking, we project $u$ on the manifold of the sum
of soliton
profiles modulated in phase, speed, space and scaling. Since we
impose the modulated speed and scaling to have the same ratio as the
original speed and scaling, we in fact modulate on a family of $3N$
parameters.

\begin{proposition}
  [Modulation]\label{prop:modulation}
  For $\delta$ and $1/L$ small enough, the following property is
  verified.
  For $j=1,\dots, N$ there exist (unique) $\mathcal C^1$-functions
  \[
  \tilde\theta_j:[0,t^\star]\to\R,\quad \tilde \omega_j:[0,t^\star]\to(0,\infty),\quad
  \tilde x_j:[0,t^\star]\to\R, \quad   \tilde c_j:[0,t^\star]\to\R
  \]
  such that if we define modulated solitons $\tilde R_j$ and $\eps$ by
  \[
  \tilde R_j(t)=e^{i\tilde\theta(t)}\phi_{\tilde\omega_j(t),\tilde c_j(t)}(\cdot-\tilde
  x_j(t)),
  \quad \eps(t)=u(t)-\sum_{j=1}^N\tilde R_j(t),
  \]
  then $\eps$ satisfies for all $t\in [0,t^\star]$ the orthogonality
  conditions (the constants $\mu_j$ are given by Proposition~\ref{prop:3})
  \begin{equation}
    \label{eq:ortho}
    \psld{\eps}{i\tilde R_{j}}
    =\psld{\eps}{\partial_x\tilde R_{j}}=
    \psld{\eps}{\tilde R_j+\mu_ji\partial_x\tilde R_j}=
    0,\quad j=1,\dots,N.
  \end{equation}
  The scaling and speed parameters verify for all $t\in[0,t^\star]$ and for any $j=1,\dots,N$ the
  relationship
  \begin{equation}\label{eq:wc}
    \tilde c_j(t)-c_j=\mu_j(\tilde \omega_j(t)-\omega_j).
  \end{equation}
  Moreover, there exists $\tilde C>0$ such that for all $t\in [0,t^\star]$ we have
  \begin{gather}
    \label{eq:control}
    \norm{\eps(t)}_{H^1}+\sum_{j=1}^N\left(|\tilde\omega_j(t)-\omega_j|+|\tilde
      c_j(t)-c_j|\right)\leq \tilde CA_0\left(
      \delta+e^{-\frac{1}{4}
        \omega_\star L}\right), \\ 
    \tilde
    x_{j+1}(t)-\tilde x_j(t)>\frac {L}{2},\quad\text{for }j=1,\dots,N-1,\label{eq:dist-mod-x}
  \end{gather}
  and the derivatives in time verify
  \begin{equation}\label{eq:dynamical}
    \sum_{j=1}^N   |\partial_t\tilde c_j|+
    |\partial_t\tilde\omega_j|+\left|\partial_t\tilde\theta_j-\tilde\omega_j\right|+\left|\partial_t\tilde
      x_j-\tilde c_j\right|
    \leq \tilde C
    \norm{\eps(t)}_{H^1} 
    +\tilde Ce^{-\frac{1}{4}
      \omega_\star L} .
  \end{equation}
  Finally, at $t=0$ the estimate does not depend on $A_0$ and we have
  \begin{equation}
    \label{eq:initial-time}
    \norm{\eps(0)}_{H^1}+\sum_{j=1}^N
    \left(|\tilde\omega_j(0)-\omega_j|+|\tilde
      c_j(0)-c_j|\right)
    \leq \tilde C\left(
      \delta+e^{-\frac{1}{4}
        \omega_\star L}\right).
  \end{equation}
\end{proposition}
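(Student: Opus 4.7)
The plan is to apply the Implicit Function Theorem to produce the modulation parameters. I exploit~\eqref{eq:wc} by \emph{a priori} setting $\tilde c_j := c_j + \mu_j(\tilde\omega_j - \omega_j)$, leaving the $3N$ free parameters $\vec p = (\tilde\theta_j, \tilde x_j, \tilde\omega_j)_{j=1}^N$, which matches the $3N$ orthogonality conditions in~\eqref{eq:ortho}. Define
\[
F(u,\vec p) = \bigl(\psld{\eps}{i\tilde R_j},\,\psld{\eps}{\partial_x\tilde R_j},\,\psld{\eps}{\tilde R_j+i\mu_j\partial_x\tilde R_j}\bigr)_{j=1,\dots,N},
\]
with $\eps = u - \sum_j \tilde R_j$, and apply IFT at the base point $u_0 = \sum_j e^{i\theta_j}\phi_j(\cdot - x_j)$, $\vec p_0 = (\theta_j, x_j, \omega_j)$, where $\eps$ vanishes.

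The core step is the invertibility of $D_{\vec p}F(u_0,\vec p_0)$. Since $\eps = 0$ at the base point, only $\partial_{\vec p}\eps$ contributes. Using the exponential decay~\eqref{eq:exp-decay} together with $x_{j+1} - x_j > L$, every off-diagonal entry (an $L^2$-pairing between functions localized near $x_j$ and near $x_k$ with $j \neq k$) is of size $O(e^{-\alpha L})$ for some $\alpha > 0$. Each diagonal $3 \times 3$ block consists of $L^2$-pairings between the parameter directions $-i\tilde R_j$, $\partial_x \tilde R_j$, $-\psi_j$ (where $\psi_j := (\partial_\omega + \mu_j\partial_c)\phi_j$ is precisely the direction introduced in Step~3 of the proof of Proposition~\ref{prop:3}) and the test directions $i\tilde R_j$, $\partial_x \tilde R_j$, $\tilde R_j + i\mu_j\partial_x\tilde R_j$. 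The crucial entry is
\[
\psld{\psi_j}{\tilde R_j + i\mu_j\partial_x\tilde R_j} = -\dual{S''(\phi_j)\psi_j}{\psi_j},
\]
via $S''(\phi_j)\psi_j = -\phi_j - i\mu_j\partial_x\phi_j$. By~\eqref{eq:recall} and the explicit choice of $\mu_j$, this quantity is nonzero, and combined with the explicit formulas of Appendix~\ref{sec:some-techn-results} (for the remaining entries) each diagonal block is invertible. For $L$ large, so is the full matrix, and IFT yields $\mathcal C^1$ dependence of $\vec p$ on $u \in H^1(\R)$ in a suitable neighborhood.

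For each $t \in [0, t^\star]$, the assumption $u(t) \in \mathcal V(\delta, L, A_0)$ puts $u(t)$ in the IFT neighborhood, producing continuous parameters $\vec p(t)$ satisfying the estimates~\eqref{eq:control} and~\eqref{eq:dist-mod-x} by the quantitative IFT, with the sharper~\eqref{eq:initial-time} at $t = 0$ where the proximity is directly $\delta$. For the dynamical estimates~\eqref{eq:dynamical}, I differentiate each orthogonality condition in $t$, use the equation satisfied by $\eps$ (which in turn uses that each $\tilde R_j$ solves~\eqref{eq:nls} modulo a correction involving the errors $\partial_t \tilde\theta_j - \tilde\omega_j$, $\partial_t \tilde x_j - \tilde c_j$, $\partial_t \tilde\omega_j$), and obtain a $3N \times 3N$ linear system on these errors whose matrix is $D_{\vec p}F$ up to $O(\norm{\eps}_{H^1} + e^{-\alpha L})$ perturbations, and whose right-hand side is controlled by $\norm{\eps}_{H^1}$ plus interaction terms of size $e^{-\alpha L}$ (from $|u|^2 u_x - \sum_j |\tilde R_j|^2 \partial_x \tilde R_j$ on the soliton overlap regions). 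Inverting gives~\eqref{eq:dynamical}. I expect the main obstacle to be the invertibility of the diagonal blocks: the third orthogonality condition and the constraint~\eqref{eq:wc} are carefully designed so that the critical matrix entry reduces to the already-computed~\eqref{eq:recall}; the careful book-keeping of the exponentially small cross-interactions throughout is secondary but non-trivial.
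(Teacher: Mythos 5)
Your proposal follows the paper's own route almost step for step: the same reduction to $3N$ free parameters with $\tilde c_j$ slaved to $\tilde\omega_j$ through~\eqref{eq:wc}, the same implicit-function-theorem setup with the Jacobian~\eqref{eq:jacobian} being block-diagonal up to $O(e^{-\frac12\omega_\star L})$, the same identification of the $(3,3)$-entry with the quantity~\eqref{eq:recall} coming from $S''(\phi_j)\psi_j=-\phi_j-i\mu_j\partial_x\phi_j$, and the same derivation of~\eqref{eq:dynamical} by differentiating the orthogonality conditions in time and inverting a matrix that is a perturbation of the Jacobian. The one place where you assert rather than prove is exactly the point the paper singles out as the real difficulty: the invertibility of each diagonal $3\times3$ block does \emph{not} follow from the nonvanishing of the $(3,3)$-entry alone, because the upper-left $2\times2$ sub-block is not diagonal. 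Since the translation modulation acts on the full phase-carrying profile, one has $\psld{\partial_x\tilde R_j}{i\tilde R_j}=\frac{c_j}{2}\norm{\varphi_j}_{L^2}^2-\frac14\norm{\varphi_j}_{L^4}^4=-2\sqrt{4\omega_j-c_j^2}\neq0$, so invertibility of the block requires the strict inequality $\norm{\phi_j}_{L^2}^2\,\norm{\partial_x\phi_j}_{L^2}^2>\psld{\partial_x\phi_j}{i\phi_j}^2$, i.e.~\eqref{eq:help-det}; ``having the explicit formulas of the Appendix for the remaining entries'' does not by itself settle this, and the paper devotes a separate argument to it (monotonicity in $c$ of $f(\omega,c)=8\sqrt\omega\arctan\sqrt{\tfrac{2\sqrt\omega+c}{2\sqrt\omega-c}}-2\sqrt{4\omega-c^2}$ together with $f(\omega,-2\sqrt\omega)=0$).

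That said, the gap is fillable, and even more softly than in the paper: in the real inner product $\psld{\cdot}{\cdot}$, Cauchy--Schwarz gives $\psld{\partial_x\phi_j}{i\phi_j}^2\leq\norm{\partial_x\phi_j}_{L^2}^2\norm{\phi_j}_{L^2}^2$ with equality only if $\partial_x\phi_j=\lambda i\phi_j$ for some real $\lambda$, which is impossible for a nonzero $H^1$ function; so the $2\times2$ determinant is strictly negative and each block is invertible. If you add this one line (or reproduce the paper's computation leading to~\eqref{eq:help-det}), your argument matches the paper's proof in substance; everything else (the quantitative IFT estimates giving~\eqref{eq:control}, \eqref{eq:dist-mod-x}, \eqref{eq:initial-time}, and the order-one $\norm{\eps}_{H^1}$ term in~\eqref{eq:dynamical} caused by $\mu_j\neq0$) is handled at the same level of detail as the paper itself.
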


\begin{remark}
  We modulate here in a different way as in
~\cite{MaMeTs06}. First, we (artificially) modulate also in speed,
  whereas in~\cite{MaMeTs06} modulation was only on phase, position
  and scaling. Second, we modulate in position on the full profile
  $\phi_{\omega,c}$, whereas in~\cite{MaMeTs06} modulation in position
  was done only on the modulus of the profile (the equivalent of
  $\varphi_{\omega,c}$ in our setting). The way we modulate is more
  natural, but it introduces a technical difficulty in the proof of
  the modulation result. Precisely, the Jacobian given by 
~\eqref{eq:jacobian} is not diagonal and its invertibility is not
  obvious. 
  We are able to overcome this difficulty in our setting thanks to our
  knowledge of the explicit expressions of the profiles. 
\end{remark}

\begin{proof}[Proof of Proposition~\ref{prop:modulation}]
  The existence and regularity of the functions $(\tilde\theta_j,\tilde
  x_j,\tilde\omega_j,\tilde c_j)$ follow  from classical arguments
  involving the Implicit Function Theorem. The main difficulty here is that we have a
  non-diagonal Jacobian, and proving its invertibility requires an additional
  argument  compare to the usual setting.  The modulation equations
~\eqref{eq:dynamical} are obtained via the combination of the equation
  verified by $\eps$ with the orthogonality conditions
~\eqref{eq:ortho}. We only give the important
  steps of the proof.

  Let
  \[
  q=\left(\tilde \theta_1,\dots,\tilde\theta_N; \tilde x_1,\dots,
    \tilde x_N; \tilde \omega_1, \dots, \tilde \omega_N; v \right)\in
  \R^{3N}\times H^1(\R)
  \]
  and
  \[
  q_0=\bigg( \theta_1,\dots,\theta_N;  x_1,\dots,  x_N;  \omega_1, \dots,  \omega_N; \sum_{j=1}^Ne^{i\theta_j}\phi_j(\cdot-x_j
  ) \bigg).
  \]
  For a given $q$, define the speeds $\tilde c_j$ by
  \begin{equation}\label{eq:wc-pf}
    \tilde c_j=c_j+\mu_j(\tilde\omega_j-\omega_j).
  \end{equation}
  Define also
  the modulated profiles $\tilde R_j$ and the difference $\eps$ by
  \[
  \tilde R_j=e^{i\tilde\theta}\phi_{\tilde\omega_j,\tilde c_j}(\cdot-\tilde
  x_j),
  \quad \eps=v-\sum_{j=1}^N\tilde R_j,
  \]
  Consider the function $\Phi:\R^{3N}\times H^1(\R)\to \R^{3N}$ defined
  by $\Phi=(\Phi^1,\Phi^2,\Phi^3)$ with 
  \[
  \Phi_j^1=\psld{\eps}{i\tilde R_{j}},\quad
  \Phi_j^2=\psld{\eps}{\partial_x\tilde R_{j}},\quad
  \Phi_j^3=\psld{\eps}{\tilde R_j+i\mu_{j}\partial_x\tilde R_j}.
  \]
  Note that
  \[
  \frac{\partial \eps}{\partial \tilde\theta_j}\Big|_{q=q_0}=-i R_{j},\quad
  \frac{\partial \eps}{\partial \tilde x_j}\Big|_{q=q_0}=\partial_xR_{j},
  \]
  and by the relationship~\eqref{eq:wc-pf}, we have
  \[
  \frac{\partial \eps}{\partial \tilde
    \omega_j}\Big|_{q=q_0}=-\left(\partial_\omega R_j+\mu_j\partial_c R_j\right).
  \]
  Then, 
  \begin{align*}
    \frac{d\Phi_j^1}{d \tilde\theta_j}\Big|_{q=q_0}&=-\norm{R_j}_{L^2}^2=-8 \arctan \left( \sqrt{\frac{2\sqrt{\omega_j}+c_j}{2\sqrt{\omega_j}-c_j}}
                                                     \right) 
                                                     ,\\
    \frac{d\Phi_j^2}{d \tilde x_j}\Big|_{q=q_0}&=\norm{\partial_x
                                                 R_j}_{L^2}^2=
                                                 8\omega_j \arctan \left( \sqrt{\frac{2\sqrt{\omega_j}+c_j}{2\sqrt{\omega_j}-c_j}}
                                                 \right) 
                                                 ,
  \end{align*}
  where the explicit values come from the formulas in~\eqref{FM:L2} and
~\eqref{eq:norm-derivative-phi}. 
  Using 
  \[
  R_j+i\mu_j\partial_x
  R_j=M'(R_j)+\mu_jP'(R_j),
  \]
  and~\eqref{FM:Mass-dw}--\eqref{FM:Mom-c}, we have
  \begin{align*}
    \frac{d\Phi_j^3}{d\tilde \omega_j}\Big|_{q=q_0}&=-\left(\partial_{ \omega} R_{j}+\mu_{j}\partial_{c} R_{j}, R_j+i\mu_{j}\partial_xR_j\right)_2\\
                                                   &=-\left(\partial_{\omega}M( R_j)+\mu_{j}\left(\partial_{c}M( R_j)+\partial_{ \omega}P( R_j)\right)+\mu_{j}^2\partial_{c}P( R_j)\right)\\
                                                   &=\left( \frac{ c_j}{ \omega_j}-4\mu_j+ c_j\mu_j^2
                                                     \right) \frac{1}{\sqrt{4 \omega_j- c_j^2}}<0 ,
  \end{align*}
  where at the last inequality we recalled~\eqref{eq:recall} and the
  choice of $\mu_j$. 
  Moreover, since $x_{j+1}-x_j>L$, when  $j\neq k$, for $l=1,2,3$ we have
  \begin{align*}
    \frac{d\Phi_j^l}{d \tilde\theta_k}\Big|_{q=q_0}=\frac{d\Phi_j^l}{d
    \tilde x_k}\Big|_{q=q_0}=\frac{d\Phi_j^l}{d\tilde
    \omega_k}\Big|_{q=q_0}=O(e^{-\frac12\omega_\star L}).
  \end{align*}
  We easily verify that
  \begin{align*}
    \left(\frac{d\Phi_j^3}{d \tilde\theta_j}\Big|_{q=q_0}\right)&=\psld{-i R_{j}}{ R_{j}+i\mu_j\partial_x R_j}=0,\\
    \left(\frac{d\Phi_j^3}{d
    \tilde x_j}\Big|_{q=q_0}\right)
                                                                &=\psld{\partial_x R_j}{ R_{j}+i\mu_j\partial_x R_j}
                                                                  =0.
  \end{align*}
  Furthermore, using the explicit expression of $\phi_j$ as well as the formulas
~\eqref{FM:L2}--\eqref{FM:L4}, we have
  \begin{multline*}
    \frac{d\Phi_j^1}{d
      \tilde x_j}\Big|_{q=q_0}=
    -\frac{d\Phi_j^2}{d
      \tilde\theta_j}\Big|_{q=q_0}
    =\psld{\partial_x R_{j}}{i
      R_{j}}\\
    =\psld{\partial_x\phi_j}{i
      \phi_j}=\psld{\partial_x\varphi_j+\frac{ic}{2}\varphi_j-\frac{i}{4}|\varphi_j|^2\varphi_j}{i\varphi_j}=\frac
    c2\norm{\varphi_j}_{L^2}^2-\frac14\norm{\varphi_j}_{L^4}^4
    \\=-2\sqrt{4\omega_j-c_j^2}.
  \end{multline*}
  The Jacobian matrix of the derivative of the function $q\mapsto\Phi(q)$ with respect to $(\tilde\theta_j,\tilde x_j,
  \tilde\omega_j)$ is the $3\times3$ block matrix
  \begin{equation}\label{eq:jacobian}
    D\Phi=\left.\begin{pmatrix}
        \frac{d\Phi_j^1}{d \tilde\theta_k}&\frac{d\Phi_j^1}{d
          \tilde x_k}&\frac{d\Phi_j^1}{d
          \tilde\omega_k}\\
        \frac{d\Phi_j^2}{d \tilde\theta_k}&\frac{d\Phi_j^2}{d
          \tilde x_k}&\frac{d\Phi_j^2}{d
          \tilde\omega_k}\\
        \frac{d\Phi_j^3}{d \tilde\theta_k}&\frac{d\Phi_j^3}{d
          \tilde x_k}&\frac{d\Phi_j^3}{d
          \tilde\omega_k}
      \end{pmatrix}\right|_{q=q_0}
    .
  \end{equation}
  Each block is diagonal up to $O(e^{-\frac12\omega_\star L})$. All terms on
  the main diagonal are non-zero and have been explicitly
  computed. The block terms at $(3,1)$ and $(3,2)$ are of order
  $O(e^{-\frac12\omega_\star L})$. Therefore, the determinant of the matrix
  is 
  \begin{multline*}
    \det(D\Phi)=
    \prod_{j=1}^N \frac{\left( \frac{ c_j}{ \omega_j}-4\mu_j+ c_j\mu_j^2
      \right) }{\sqrt{4 \omega_j- c_j^2}} 
    \cdot\\
    \cdot \prod_{j=1}^N\left(-
      64\omega_j\left(\arctan \left( \sqrt{\frac{2\sqrt{\omega_j}+c_j}{2\sqrt{\omega_j}-c_j}}
        \right) \right)^2+ \left(2\sqrt{4\omega_j-c_j^2}\right)^2\right)+O(e^{-\frac12\omega_\star L}).
  \end{multline*}
  Consider the function $f(\omega,c)$ defined by
  \[
  f(\omega,c)=8\sqrt{\omega}\arctan \left( \sqrt{\frac{2\sqrt{\omega}+c}{2\sqrt{\omega}-c}}
  \right) -2\sqrt{4\omega-c^2}.
  \]
  By explicit calculations, we have
  \[
  \partial_cf(\omega,c)=\frac{2(c+2\sqrt{\omega})}{\sqrt{4\omega-c^2}}>0.
  \]
  Moreover, at $c=-2\sqrt{\omega}$ the function starts with $f(\omega,-2\sqrt{\omega})= 0$. This implies
  that $f(\omega,c)>0$ for all $(\omega,c)\in\R^2$ with
  $4\omega>c^2$. As a consequence, for any $j=1,\dots,N$ we have
  \begin{equation}\label{eq:help-det}
    64\omega_j\arctan \left( \sqrt{\frac{2\sqrt{\omega_j}+c_j}{2\sqrt{\omega_j}-c_j}}
    \right) >\left(2\sqrt{4\omega_j-c_j^2}\right)^2,
  \end{equation}
  and we infer that
  \[
  \det(D\Phi)\neq 0.
  \]
  Hence we can apply the Implicit Function Theorem to $\Phi$ to obtain
  the existence of a function $\tilde q_1=(\tilde \theta_j, \tilde x_j, \tilde
  \omega_j)$ from $[0,t^\star]$ to $\R^{3N}$ such that for any
  $t\in[0,t^\star]$ we have
  \[
  \Phi(\tilde q_1(t),u(t))=0. 
  \]
  We refer the reader to~\cite{MaMeTs06} for the regularity of the
  modulation parameters. This concludes the first part of the
  proof.

  We now want to obtain the modulation equations
~\eqref{eq:dynamical}. Recall first that $u$ is a solution of
~\eqref{eq:nls}, hence it verifies 
  \[
  iu_t=E'(u).
  \]
  Since $u=\sum_{j=1}^N\tilde R_j+\eps$, the equation verified by $\eps$
  is
  \[
  i\eps_t+\sum_{j=1}^N\left(-\partial_t\tilde \theta_j\tilde
    R_j-i\partial_t \tilde x_j\partial_x\tilde
    R_j+i\partial_t\tilde\omega_j\left(\partial_\omega\tilde
      R_j+i\mu_j\partial_c\tilde R_j\right)\right)
  =E'\bigg(\sum_{j=1}^N\tilde R_j+\eps\bigg).
  \]
  By exponential localization of the solitons and~\eqref{eq:dist-mod-x}, we have  
  \[
  E'\bigg(\sum_{j=1}^N\tilde R_j+\eps\bigg)=\sum_{j=1}^N \left(E'
    \left(\tilde R_j\right)+E'' \left(\tilde
      R_j\right)\eps\right)+O\left(e^{-\frac{1}{4}\omega_\star L}\right)+ O\left(\norm{\eps}_{H^1}^2\right).
  \]
  Recall that each $\tilde R_j$ verifies the equation
  \[
  E'(\tilde R_j)+\tilde \omega_jM'(R_j)+\tilde c_jP'(R_j)=0.
  \]
  Therefore, the equation for $\eps$ can be written as
  \begin{multline*}
    i\eps_t+\sum_{j=1}^N\left((\tilde \omega_j-\partial_t\tilde \theta_j)\tilde
      R_j+(\tilde c_j-\partial_t \tilde x_j)i\partial_x\tilde
      R_j+i\partial_t\tilde\omega_j\left(\partial_\omega\tilde
        R_j+\mu_j\partial_c\tilde R_j\right)\right)
    \\=
    \sum_{j=1}^N E'' \left(\tilde
      R_j\right)\eps+O\left(e^{-\frac{1}{4}\omega_\star L}\right)+ O\left(\norm{\eps}_{H^1}^2\right).
  \end{multline*}
  One can see already the modulation equations appearing. For
  convenience, we denote by $\Mod(t)$ the vector of modulation
  equations, i.e.
  \[
  \Mod(t)=\left(\tilde \omega_j-\partial_t\tilde \theta_j,\tilde c_j-\partial_t \tilde x_j,i\partial_t\tilde\omega_j\right)_{j=1,\dots,N}.
  \]
  Differentiating with respect to time the orthogonality conditions
~\eqref{eq:ortho}, we obtain
  \begin{align}
    0&=-\psld{i\eps_t}{\tilde R_j}+\psld{\eps}{i\partial_t\tilde R_j},\label{eq:first-orth}\\
    0&=\psld{i\eps_t}{i\partial_x\tilde R_j}+\psld{\eps}{\partial_t\partial_x\tilde R_j},\label{eq:second-orth}\\
    0&=\psld{i\eps_t}{i \tilde R_j-\mu_j\partial_x\tilde
       R_j}+\psld{\eps}{\partial_t(\tilde R_j+i\mu_j\partial_x\tilde R_j)}.\label{eq:third-orth}
  \end{align}
  We have
  \begin{multline*}
    \abs*{\psld{\eps}{i\partial_t\tilde R_j}}
    +\abs*{\psld{\eps}{\partial_t\partial_x\tilde R_j}}
    +\abs*{\psld{\eps}{\partial_t(\tilde R_j+i\mu_j\partial_x\tilde
        R_j)}} 
    \\
    \lesssim \left(1+\abs{\Mod(t)}\right)\norm{\eps}_{L^2}.
  \end{multline*}
  Using the equation for $\eps$, for the first term in the left
  hand side of~\eqref{eq:first-orth}, we get
  \begin{multline*}
    -\psld{i\eps_t}{\tilde
      R_j}=(\tilde\omega_j-\partial_t\tilde\theta_j)\norm{\tilde
      R_j}_{L^2}^2+(\tilde c_j-\partial_t\tilde
    x_j)\psld{i\partial_x\tilde R_j}{\tilde R_j}
    \\+\partial_t\tilde \omega_j\psld{i(\partial_\omega\tilde
      R_j+\mu_j\partial_c\tilde R_j)}{\tilde R_j}
    +(1+\abs{\Mod(t)})O\left(e^{-\frac{1}{4}\omega_\star L}\right)+
    O\left(\norm{\eps}_{H^1}\right).
  \end{multline*}
  For the first term in the left
  hand side of~\eqref{eq:second-orth}, we get
  \begin{multline*}
    -\psld{i\eps_t}{i\partial_x\tilde
      R_j}=(\tilde\omega_j-\partial_t\tilde\theta_j)
    \psld {\tilde R_j}{i\partial_x\tilde R_j}
    +(\tilde c_j-\partial_t\tilde
    x_j)\norm{\partial_x\tilde
      R_j}_{L^2}^2
    \\+\partial_t\tilde \omega_j\psld{i(\partial_\omega\tilde
      R_j+\mu_j\partial_c\tilde R_j)}{i\partial_x\tilde R_j}
    +(1+\abs{\Mod(t)})O\left(e^{-\frac{1}{4}\omega_\star L}\right)+
    O\left(\norm{\eps}_{H^1}\right).
  \end{multline*}
  For the first term in the left
  hand side of~\eqref{eq:third-orth}, there are some 
  cancellations, and we get
  \begin{multline*}
    -\psld{i\eps_t}{i \tilde R_j-\mu_j\partial_x\tilde
      R_j}=
    \partial_t\tilde \omega_j\psld{i(\partial_\omega\tilde
      R_j+\mu_j\partial_c\tilde R_j)}{i \tilde R_j-\mu_j\partial_x\tilde
      R_j}\\
    +(1+\abs{\Mod(t)})O\left(e^{-\frac{1}{4}\omega_\star L}\right)+ O\left(\norm{\eps}_{H^1}\right).
  \end{multline*}
  Remark that, in contrary to what happens in~\cite{MaMeTs06},
  the $\eps$ term is still of order $1$ and not of order $2$ (unless
  $\mu=0$, but this is ruled out by our assumptions on the $(c_j)$).
  The attentive reader will have noticed the appearance of the same
  elements as in the matrix $D\Phi$. 
  We have indeed
  \[
  M
  \begin{pmatrix}
    \tilde\omega_j-\partial_t\tilde\theta_j\\
    \tilde c_j-\partial_t\tilde
    x_j\\
    \partial_t\tilde \omega_j
  \end{pmatrix}
  =(1+\Mod(t))\left(O\left
      (\norm{\eps}_{L^2}\right)+O\left(e^{-\frac{1}{4}\omega_\star L}\right)\right)
  , 
  \]
  for $M=(m_{kl})$, where
  \begin{align*}
    m_{11}&=\norm{\tilde
            R_j}_{L^2}^2,
    &
      m_{12}&=\psld{i\partial_x\tilde R_j}{\tilde R_j},
    \\
    m_{21}&=\psld {\tilde R_j}{i\partial_x\tilde R_j},
    &
      m_{22}&=\norm{\partial_x\tilde
              R_j}_{L^2}^2,
    \\
    m_{31}&=\psld{i \tilde R_j-\mu_j\partial_x\tilde
            R_j}{\tilde R_j},
    &
      m_{32}&=\psld{i \tilde R_j-\mu_j\partial_x\tilde
              R_j}{i\partial_x\tilde R_j},
    \\
    m_{13}&=\psld{i(\partial_\omega\tilde
            R_j+\mu_j\partial_c\tilde R_j)}{\tilde R_j},
    &
      m_{23}&= \psld{i(\partial_\omega\tilde
              R_j+\mu_j\partial_c\tilde R_j)}{i\partial_x\tilde R_j},
    \\
    m_{33}&=\psld{i(\partial_\omega\tilde
            R_j+\mu_j\partial_c\tilde R_j)}{i \tilde R_j-\mu_j\partial_x\tilde
            R_j}.
  \end{align*}
  Using the explicit values of the coefficients $(m_{kl}$) (see above
  calculations for $D\Phi$), we obtain
  \[
  M=
  \begin{pmatrix}
    8 \arctan \left( \sqrt{\frac{2\sqrt{\omega_j}+c_j}{2\sqrt{\omega_j}-c_j}}
    \right) 
    &
    2\sqrt{4\omega_j-c_j^2}
    &
    *
    \\
    2\sqrt{4\omega_j-c_j^2}
    &
    8\omega_j \arctan \left( \sqrt{\frac{2\sqrt{\omega_j}+c_j}{2\sqrt{\omega_j}-c_j}}
    \right) 
    &
    *
    \\
    0
    &
    0
    &
    \frac{-\left( \frac{\tilde c_j}{\tilde \omega_j}-4\mu_j+\tilde c_j\mu_j^2
      \right)}{\sqrt{4\tilde \omega_j-\tilde c_j^2}}
  \end{pmatrix}.
  \]
  Hence $M$ is invertible using the same arguments as to prove that $D\Phi$ is, see in
  particular~\eqref{eq:help-det}, and we can infer that 
  \[
  \sum_{j=1}^N\abs*{  \tilde\omega_j-\partial_t\tilde\theta_j}+
  \abs*{\tilde c_j-\partial_t\tilde
    x_j}+
  \abs*{\partial_t\tilde \omega_j}
  \lesssim \norm{\eps}_{H^1}+e^{-\frac{1}{4}\omega_\star L}
  .
  \]
  This concludes the proof. 
\end{proof}

We complete this section by giving estimates on the interaction between $\tilde R_j$ and $\tilde
R_k$ when $j\neq k$.

\begin{lemma}[Interaction estimates
  One]\label{lem:interaction-estimates} 
  There exists a function $g\in L^\infty_tL^1_x(\R,\R^d)\cap
  L^\infty_tL^\infty_x(\R,\R^d)$ such that for all $j,k=1,\dots,N$ such
  that $j\neq k$, we have
  \begin{align*}
    |\tilde R_j|+|\partial_x\tilde R_j|+|\partial_\omega\tilde
    R_j|+|\partial_c\tilde R_j|&\lesssim e^{-\omega_\star|x-\tilde x_j|}.
    \\
    e^{-\omega_\star|x-\tilde x_j|}e^{-\omega_\star|x-\tilde x_k|}&\leq
                                                                    e^{-\frac12\omega_\star|\tilde x_j-\tilde x_k|}g(t,x).
  \end{align*}
  Moreover, if~\eqref{eq:control}-\eqref{eq:dynamical} hold, then
  \[
  \abs{\tilde x_j-\tilde x_k}\geq \frac 12(L+c_\star t). 
  \]
\end{lemma}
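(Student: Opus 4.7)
The three assertions are essentially independent. The first is a pointwise bound on the modulated profiles, the second is an elementary algebraic manipulation, and the third integrates the dynamical laws \eqref{eq:dynamical} on $[0,t^\star]$.

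For the pointwise decay bound on $\tilde R_j$, I would start from the explicit expressions \eqref{eq:16}--\eqref{eq:varphi-formula} together with \eqref{eq:exp-decay}: the profile $\phi_{\omega,c}$ and its $x$-derivative decay exponentially at rate $\alpha\sqrt{\omega-c^2/4}$ for any $\alpha<1$, while the $\omega$- and $c$-derivatives pick up at worst polynomial prefactors from differentiating $\sqrt{4\omega-c^2}$ inside the $\cosh$ of \eqref{eq:varphi-formula}, prefactors which are reabsorbed by decreasing $\alpha$ slightly. By the control \eqref{eq:control}, one can keep $\sqrt{\tilde\omega_j(t)-\tilde c_j(t)^2/4}$ arbitrarily close to $\sqrt{\omega_j-c_j^2/4}\geq\omega_\star$ uniformly in $t\in[0,t^\star]$ by taking $\delta$ small and $L$ large; choosing $\alpha$ correspondingly close to $1$ then yields an effective exponential rate at least $\omega_\star$, with a multiplicative constant uniform in $t$ and $j$.

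For the product inequality, I use $|x-\tilde x_j|+|x-\tilde x_k|\geq |\tilde x_j-\tilde x_k|$ to split
\[
e^{-\omega_\star(|x-\tilde x_j|+|x-\tilde x_k|)}\leq e^{-\frac12\omega_\star|\tilde x_j-\tilde x_k|}\,e^{-\frac12\omega_\star(|x-\tilde x_j|+|x-\tilde x_k|)},
\]
and then bound the residual factor by $e^{-\frac12\omega_\star|x-\tilde x_j|}$. Setting
\[
g(t,x):=\sum_{l=1}^N e^{-\frac12\omega_\star|x-\tilde x_l(t)|}
\]
produces a function uniformly bounded by $N$ pointwise and with $L^1_x$-norm bounded by $4N/\omega_\star$ for every $t$, hence $g\in L^\infty_t(L^1_x\cap L^\infty_x)$, and $e^{-\frac12\omega_\star|x-\tilde x_j|}\leq g(t,x)$ for every pair $(j,k)$.

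For the third estimate, I fix $j<k$ (the general case is symmetric by absolute values) and write
\[
\partial_t(\tilde x_k-\tilde x_j)=(\tilde c_k-\tilde c_j)+(\partial_t\tilde x_k-\tilde c_k)-(\partial_t\tilde x_j-\tilde c_j).
\]
By \eqref{eq:dynamical} combined with \eqref{eq:control}, each of the two correction terms is bounded by a quantity of order $A_0\bigl(\delta+e^{-\frac14\omega_\star L}\bigr)$; by \eqref{eq:wc} and \eqref{eq:control}, the difference $\tilde c_k-\tilde c_j$ is within the same quantity of $c_k-c_j\geq c_\star$. Choosing $\delta_0$ small enough and $L_0$ large enough in terms of the (already fixed) bootstrap constant $A_0$ and of $c_\star$ yields $\partial_t(\tilde x_k-\tilde x_j)\geq c_\star/2$ on $[0,t^\star]$. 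The initial separation $\tilde x_k(0)-\tilde x_j(0)\geq L/2$ follows by telescoping \eqref{eq:dist-mod-x} evaluated at $t=0$ (or directly from the hypothesis $x_k-x_j>L$ together with \eqref{eq:initial-time}), and a simple integration gives $\tilde x_k(t)-\tilde x_j(t)\geq(L+c_\star t)/2$. The only real subtlety, and the place where I would expect to spend the most care, is that the thresholds for $\delta_0$ and $1/L_0$ here must depend on $A_0$; this is consistent with the global scheme since $A_0$ is chosen first and the thresholds afterwards.
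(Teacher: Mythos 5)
Your proposal is correct and follows essentially the same route as the paper's (very brief) proof: exponential decay of the profiles plus the control \eqref{eq:control} for the first bound, the elementary splitting $|x-\tilde x_j|+|x-\tilde x_k|\geq|\tilde x_j-\tilde x_k|$ with $g$ a finite sum of exponentials for the second (your single-center sum versus the paper's sum over pairs $g_{jk}$ is immaterial), and integration of the modulation laws \eqref{eq:dynamical} from the initial separation \eqref{eq:dist-mod-x} for the third, which is exactly what the paper compresses into ``in view of \eqref{eq:control}--\eqref{eq:dynamical}''. Your extra observations (absorbing the polynomial prefactors of $\partial_\omega\phi,\partial_c\phi$ by lowering $\alpha$, and the $A_0$-dependence of $\delta_0$, $L_0$) only make explicit what the paper leaves implicit.
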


\begin{proof}
  We first remark that, by the exponential decay of the
  soliton profiles~\eqref{eq:exp-decay} and the definition
~\eqref{eq:def-omega} of $\omega_\star$, for $j=1,\dots,N$, we have
  \[
  |\tilde R_j|+|\partial_x\tilde R_j|+|\partial_\omega\tilde
  R_j|+|\partial_c\tilde R_j|\lesssim e^{-\omega_\star|x-\tilde x_j|}.
  \]
  There exists $g\in L^\infty_tL^1_x(\R,\R^d)\cap
  L^\infty_tL^\infty_x(\R,\R^d)$ such that
  \[
  e^{-\omega_\star|x-\tilde x_j|}e^{-\omega_\star|x-\tilde x_k|}\leq
  e^{-\frac12\omega_\star|\tilde x_j-\tilde x_k|}g(t,x),
  \]
  Indeed, let
  \[
  g_{jk}(t,x)=e^{-\frac12\omega_\star(|x-\tilde x_j|+|x-\tilde
    x_k|)}.
  \]
  One can then take
  \[
  g(t,x)=\sum_{j\neq k}g_{j,k}(t,x).
  \]
  In view of~\eqref{eq:control}-\eqref{eq:dynamical}, we can chose
  $\delta$ and $1/L$ small enough such that for any $j\neq k$ we have
  \[
  \abs{\tilde x_j-\tilde x_k}\geq \frac L2+\frac12c_\star t. 
  \]
  This concludes the proof. 
\end{proof}

\section{Monotonicity of Localized Conservations Laws}
\label{sec:local-proc}

We are using an energy technique to control the main difference $\eps$
between $u$ and the sum of modulated solitons $\sum\tilde R_j$. The energy technique
consists in using the coercivity of a linearized action functional related to the
conservation laws and the solitons. It can be viewed as a
generalization of the method used to prove stability of a single
soliton. The main difference when considering a sum of solitons is
that we need to introduce a localization procedure around each
soliton. We recover this way the desired coercivity property, but the
price to pay is that the quantities involved are no longer
conserved. Controlling their variations in time becomes a main issue,
which is dealt with using monotonicity properties. 

The localization
procedure is the following.
Let $\psi:\R\to[0,1]$ be a smooth cut-off function such that 
\begin{gather*}
  \psi(x)=0\; \text{for }x\leq -1,\quad\psi(x)=1\;\text{for
  }x\geq1,\quad\psi'(x)>0\;\text{ for }x\in(-1,1),
  \\
  (\psi'(x))^2\lesssim \psi(x),\quad (\psi''(x))^2\lesssim \psi'(x),\quad
  \text{for all }x\in\R.
\end{gather*}
For $j=2,\dots,N$, set
\begin{equation}
  \label{eq:def-sigma}
  \tilde\sigma_j=2\frac{\tilde\omega_j(0)-\tilde\omega_{j-1}(0)}{\tilde
    c_j(0)-\tilde c_{j-1}(0)}.
\end{equation}
Recall that $(\omega_j)$ and $(c_j)$ verify the speed-frequency ratio
assumption~\eqref{eq:speed-frequency-ratios}. Therefore, for $\delta$ and
$1/L$ small enough and by the estimate on the modulation parameters at
initial time~\eqref{eq:initial-time} we have 
\[
\max(c_{j-1},\tilde c_{j-1}(0))<\tilde\sigma_j<\min(c_j,\tilde c_j(0)),\quad j=2,\dots,N.
\]
Set 
\[
x_j^\sigma=\frac{\tilde x_{j-1}(0)+\tilde x_j(0)}{2},\quad a=\frac{L^2}{64},
\]
and define 
\[
\psi_1\equiv 1,\quad 
\psi_j(t,x)=\psi\left(\frac{x-x_j^\sigma-\tilde\sigma_jt}{\sqrt{t+a}}\right),\;j=2,\dots,N,\quad
\psi_{N+1}\equiv 0.
\]
We define the cut-off functions around the $j$-th solitary wave
by
\[
\chi_j(t,x)=\psi_j(t,x)-\psi_{j+1}(t,x), \quad j=1,\dots,N.
\]
The reason for the introduction of cut-off functions of this form will
become clear in the proof of the monotonicity properties.

We define the following functional, which is made by the combination
of localized masses and momenta around each solitary wave, weighted
with the corresponding modulated parameters 
$\tilde \omega_j(0)$ and $\tilde c_j(0)$
at
$t=0$:
\begin{equation}
  \label{eq:7}
  \mathcal I(t)=\frac12\sum_{j=1}^N\int_{\R}\left(\tilde
    \omega_j(0)|u|^2+\tilde c_j(0)\Im
    (u\bar u_x)\right)\chi_jdx.
\end{equation}

The following monotonicity property for $\mathcal I$ will be a key feature of the proof of Theorem~\ref{thm:1}.

\begin{proposition}
  [Monotonicity One]
  \label{prop:monotonicity-one}
  If $\delta$ and $1/L$ are small enough,
  then for all $t\in [0,t^\star]$, we have
  \[
  \mathcal I(t)- \mathcal I(0)\lesssim
  \frac{1}{L}\sup_{s\in[0,t]}\norm{\eps(s)}_{L^2}^2
  + e^{-\frac{1}{16}\omega_\star\left( c_\star^\sigma t+L\right)}.
  \]
\end{proposition}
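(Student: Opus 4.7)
The plan is to use summation by parts to extract the time-conserved pieces of $\mathcal I(t)$ and then compute the time derivative of what remains. Since $\chi_j = \psi_j - \psi_{j+1}$ with $\psi_1 \equiv 1$ and $\psi_{N+1} \equiv 0$, Abel's transformation gives
\[
\mathcal I(t) = \tilde\omega_1(0)M(u) + \tilde c_1(0)P(u) + \sum_{j=2}^N (\tilde c_j(0)-\tilde c_{j-1}(0))\,J_j^\psi(t),
\]
where $J_j^\psi := \tfrac{\tilde\sigma_j}{2}M_j^\psi + P_j^\psi$ with $M_j^\psi := \tfrac12\int|u|^2\psi_j\,dx$ and $P_j^\psi := \tfrac12\int\Im(u\bar u_x)\psi_j\,dx$, using the relation \eqref{eq:def-sigma} to combine the mass and momentum differences. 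Since $M(u)$ and $P(u)$ are conserved, it suffices to prove the same type of bound for each $J_j^\psi(t) - J_j^\psi(0)$, $j=2,\dots,N$. From \eqref{eq:nls} I would derive the local conservation identities $\partial_t|u|^2 + \partial_x[2\Im(\bar u u_x) + \tfrac12|u|^4] = 0$ and $\partial_t\Im(u\bar u_x) + \partial_x \mathcal J_P(u) = 0$ (with explicit $\mathcal J_P$). Together with $\partial_t\psi_j = -\tilde\sigma_j\partial_x\psi_j - \tfrac{y_j}{2(t+a)}\psi'(y_j)$, where $y_j := (x - x_j^\sigma - \tilde\sigma_j t)/\sqrt{t+a}$, this yields
\[
\tfrac{d}{dt}J_j^\psi = \int \Phi_j(u,u_x)\,\partial_x\psi_j\,dx \;-\; \tfrac{1}{2(t+a)}\int \Psi_j(u,u_x)\,y_j\psi'(y_j)\,dx,
\]
for explicit densities $\Phi_j, \Psi_j$ depending on $\tilde\sigma_j$.

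\textbf{Soliton decomposition and exponential smallness.} Writing $u = R + \eps$ with $R := \sum_k \tilde R_k$, the support of $\partial_x\psi_j$ is a slab of width $O(\sqrt{t+a})$ around the line $x = x_j^\sigma + \tilde\sigma_j t$. The speed--frequency ratio condition \eqref{eq:speed-frequency-ratios} combined with the separation lower bound $|\tilde x_j - \tilde x_k| \geq \tfrac12(L + c_\star t)$ from Lemma~\ref{lem:interaction-estimates} forces each $\tilde R_k$ to be pointwise exponentially small on this slab: $|\tilde R_k(t,x)| \lesssim e^{-\omega_\star(c_\star^\sigma t + L)/C}$. Hence the pure-soliton contributions $\int \Phi_j(R,R_x)\,\partial_x\psi_j\,dx$ are pointwise exponentially small, and the linear cross terms of the form $\int R\cdot\eps\cdot(\dots)$ are handled by Cauchy--Schwarz, this exponential smallness of $R$ on the slab, and Young's inequality.

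\textbf{Quadratic $\eps$ term and time integration.} For the remaining piece, quadratic in $\eps$, I would use $|\partial_x\psi_j| \lesssim (t+a)^{-1/2}$ and the key structural property $(\psi')^2 \lesssim \psi$ to pass from $L^\infty$ bounds to $L^2$ bounds on $\eps$. The algebraic identity \eqref{eq:def-sigma} is designed so that, after integration by parts and collection of terms, the effective quadratic integrand is dominated by $(s+a)^{-3/2}\|\eps(s)\|_{L^2}^2$. With $a = L^2/64$, one then obtains
\[
\int_0^t (s+a)^{-3/2}\,ds \leq \tfrac{2}{\sqrt{a}} = \tfrac{16}{L},
\]
which produces the desired $L^{-1}\sup_{s}\|\eps(s)\|_{L^2}^2$ term. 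Integrating the exponentially small contributions from the previous paragraph yields a residual of order $e^{-\omega_\star(c_\star^\sigma t + L)/16}$.

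\textbf{Main obstacle.} The delicate step is to secure the improved decay $(s+a)^{-3/2}$ for the $\eps^2$ term in place of the naive $(s+a)^{-1/2}$, since the latter would not integrate to an $L^{-1}$ bound. The refinement hinges on the algebraic choice \eqref{eq:def-sigma}, which is precisely what causes the main quadratic form $\Phi_j$ to vanish at the ``virtual soliton'' of speed $\tilde\sigma_j$, leaving only a lower-order correction that gains the extra factor $(s+a)^{-1/2}$. Additionally, the derivative nonlinearity $i|u|^2u_x$ produces higher-order terms in the momentum flux $\mathcal J_P$ that are not present for standard cubic NLS; these must be handled via integration by parts combined with the exponential decay \eqref{eq:exp-decay} of the profiles.
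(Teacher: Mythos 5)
Your overall skeleton matches the paper's: Abel summation to reduce to the localized functionals $\mathcal I_j$ (Lemma~\ref{lem:decomposition}), conservation of $M$ and $P$, exponential smallness of the solitons on the slab where $\partial_x\psi_j$ lives, and the time integration $\int_0^t(s+a)^{-3/2}ds\lesssim 1/L$ with $a=L^2/64$. But the mechanism you propose for the key step is not correct, and this is a genuine gap. There is no cancellation coming from the choice~\eqref{eq:def-sigma} that makes the ``main quadratic form $\Phi_j$ vanish at the virtual soliton of speed $\tilde\sigma_j$'': $\tilde\sigma_j$ is not a soliton speed, and the leading flux term produced by differentiating $\tfrac{\tilde\sigma_j}{2}M_j^\psi+P_j^\psi$ is a kinetic-type term of the form $\frac{1}{\sqrt{t+a}}\int |v_x|^2\,\psi'\bigl(\tfrac{x}{\sqrt{t+a}}\bigr)dx$ (in the frame moving at speed $\tilde\sigma_j$), which does not vanish and cannot be dominated by $(s+a)^{-3/2}\norm{\eps}_{L^2}^2$ — it involves $\norm{\eps_x}_{L^2}^2$ and carries only the weight $(s+a)^{-1/2}$. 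The same objection applies to the nonlinear flux terms: the ``remaining piece'' is not quadratic in $\eps$; it contains quartic and sextic contributions (e.g. $\int|\eps|^6\psi'\,dx$), and even using $\norm{\eps}_{L^\infty}\lesssim\norm{\eps}_{H^1}\leq\delta$ one only gets a bound with weight $(s+a)^{-1/2}$, which integrates to $\sqrt{t}$, not to $1/L$.

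What actually makes the argument work in the paper is the \emph{sign} of these terms, i.e. genuine monotonicity, not improved decay. After the Galilean change of frame $u(t,x)=e^{i\frac{\tilde\sigma_j}{2}(x-x_j^\sigma-\frac{\tilde\sigma_j}{2}t)}v(t,x-x_j^\sigma-\tilde\sigma_j t)$ (under which the mass part of $\mathcal I_j$ disappears, at the price of $v$ solving a modified equation since \eqref{eq:nls} is not Galilean invariant), the derivative is
\[
\partial_t\mathcal I_j\leq -\frac{1}{\sqrt{t+a}}\int\Bigl(\tfrac34|v_x|^2-\tfrac12|v|^6+\tfrac{\tilde\sigma_j}{8}|v|^4\Bigr)\psi'\,dx+\frac{C}{(t+a)^{3/2}}\int|v|^2\bigl(\psi'+|\psi'''|\bigr)dx .
\]
The kinetic term is kept with its negative sign and is used to absorb the cubic-derivative term (Young) and the sextic term, the latter via the localized Gagliardo--Nirenberg inequality (Lemma~\ref{lem:technical}) whose small prefactor is $\bigl(\int_{|x|<\sqrt{t+a}}|v|^2dx\bigr)^2$, controlled by $\norm{\eps}_{L^2}^2+e^{-\frac{1}{16}\omega_\star(c_\star^\sigma t+L)}$ because all solitons are far from the slab (this is where $c_{j-1}<\tilde\sigma_j<c_j$ enters); the condition $\tilde\sigma_j>0$ gives the favorable sign of the quartic term. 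The only contributions estimated against $\norm{\eps}_{L^2}^2$ are the zeroth-order ones, and their extra factor $(t+a)^{-1}$ comes from differentiating the self-similar width $\sqrt{t+a}$ of the cutoff (the $\psi'''$ and $y\psi'$ terms), not from any algebraic cancellation tied to $\tilde\sigma_j$. Your proposal as written would leave the $|\eps_x|^2$ flux and the sextic term uncontrolled, so the estimate would not close without this sign/absorption structure.
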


To prove Proposition~\ref{prop:monotonicity-one}, it is convenient to
rewrite $\mathcal I$ using the 
functionals defined for $j=2,\dots,N$ by
\[
\mathcal I_j(t)=\frac12\int_{\R}\left(\frac{\tilde\sigma_j}{2}|u|^2+\Im(u \bar u_x) \right) \psi_j dx.
\]

\begin{lemma}
  [Decomposition of the functional $\mathcal{I}$]
  \label{lem:decomposition}
  We have
  \[
  \mathcal  I(t)=\tilde\omega_1(0)M(u)+\tilde
  c_1(0)P(u)+\sum_{j=2}^N(\tilde c_j(0)-\tilde c_{j-1}(0))\mathcal I_j(t).
  \]
\end{lemma}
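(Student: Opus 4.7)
The plan is purely algebraic: rearrange the defining sum for $\mathcal I(t)$ by Abel summation (summation by parts) on the $\chi_j$-decomposition, and then recognize the resulting pieces as $M(u)$, $P(u)$, and the $\mathcal I_j(t)$. There is no analytic obstacle here; the only subtlety is to package the coefficients so that $\tilde\sigma_j$ appears naturally.

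Concretely, I would start from
\[
\mathcal I(t)=\frac12\sum_{j=1}^N\tilde\omega_j(0)\int_\R|u|^2\chi_j\,dx+\frac12\sum_{j=1}^N\tilde c_j(0)\int_\R\Im(u\bar u_x)\chi_j\,dx,
\]
and use $\chi_j=\psi_j-\psi_{j+1}$ together with $\psi_1\equiv 1$, $\psi_{N+1}\equiv 0$. For any sequence $(a_j)$ one has the Abel-type identity
\[
\sum_{j=1}^N a_j(\psi_j-\psi_{j+1})=a_1+\sum_{j=2}^N(a_j-a_{j-1})\psi_j,
\]
which applied separately with $a_j=\tilde\omega_j(0)$ and $a_j=\tilde c_j(0)$ (and recalling the definitions $M(u)=\tfrac12\|u\|_{L^2}^2$, $P(u)=\tfrac12\Im\int u\bar u_x\,dx$) gives
\[
\mathcal I(t)=\tilde\omega_1(0)M(u)+\tilde c_1(0)P(u)+\frac12\sum_{j=2}^N\int_\R\Bigl((\tilde\omega_j(0)-\tilde\omega_{j-1}(0))|u|^2+(\tilde c_j(0)-\tilde c_{j-1}(0))\Im(u\bar u_x)\Bigr)\psi_j\,dx.
\]

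The final step is to factor $\tilde c_j(0)-\tilde c_{j-1}(0)$ out of each term in the remaining sum. Using the definition \eqref{eq:def-sigma} of $\tilde\sigma_j$, namely $\tilde\omega_j(0)-\tilde\omega_{j-1}(0)=\tfrac{\tilde\sigma_j}{2}(\tilde c_j(0)-\tilde c_{j-1}(0))$, the $j$-th integrand becomes $(\tilde c_j(0)-\tilde c_{j-1}(0))\bigl(\tfrac{\tilde\sigma_j}{2}|u|^2+\Im(u\bar u_x)\bigr)\psi_j$, and the factor $\tfrac12$ in front turns the remaining integral into exactly $\mathcal I_j(t)$. Assembling the pieces yields the claimed decomposition. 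The main (and essentially only) thing to watch is the bookkeeping of indices in the Abel summation and the sign/factor conventions in $M$, $P$, $\mathcal I_j$; once those are aligned, the identity is immediate.
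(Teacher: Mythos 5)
Your proof is correct and coincides with the paper's (omitted) argument: the paper states that the lemma follows from a simple rearrangement of the sum defining $\mathcal I$ using the definition of $\tilde\sigma_j$, which is exactly the Abel summation and factoring you carried out.
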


The proof of Lemma~\ref{lem:decomposition} consists in a simple
rearrangement of the sum in the definition~\eqref{eq:7} of
$\mathcal{I}$ using the definition~\eqref{eq:def-sigma} of $\tilde\sigma_j$. We omit the details.

Proposition~\ref{prop:monotonicity-one} is  a consequence of Lemma
\ref{lem:decomposition}, the conservation of mass and momentum, and the
following monotonicity result for each of the functionals $\mathcal
I_j$.

\begin{proposition}[Monotonicity Two]\label{prop:monotonicity-two}
  If $\delta$ and $1/L$ are small enough,
  then for any $j=2,\dots,N$ and $t\in [0,t^\star]$ we
  have
  \[
  \mathcal I_j(t)-\mathcal I_j(0)\lesssim \frac{1}{L}\sup_{s\in
    [0,t]}\norm{\eps(s)}^2_{L^2}+  e^{-\frac{1}{16}\omega_\star\left( c_\star^\sigma t+L\right)}.
  \]
\end{proposition}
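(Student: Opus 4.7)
The plan is to compute $\partial_t\mathcal I_j(t)$ and bound it by an integrable-in-time quantity whose integral matches the claim. Starting from~\eqref{eq:nls}, one derives the local conservation laws
\[
\partial_t|u|^2 = \partial_x\bigl(2\Im(u\bar u_x) - \tfrac{1}{2}|u|^4\bigr),\quad \partial_t\Im(u\bar u_x) = \partial_x\bigl(-\tfrac{1}{2}\partial_{xx}|u|^2 + 2|u_x|^2 - |u|^2\Im(u\bar u_x)\bigr).
\]
Combining these with the transport identity $\partial_t\psi_j + \tilde\sigma_j\partial_x\psi_j = -h\psi'(h)/(2(t+a))$, where $h=(x - x_j^\sigma - \tilde\sigma_j t)/\sqrt{t+a}$, integrating the flux terms by parts against $\psi_j$, and completing the square via $2|u_x|^2 + \tfrac{\tilde\sigma_j^2}{2}|u|^2 + 2\tilde\sigma_j\Im(u\bar u_x) = 2|u_x + \tfrac{i\tilde\sigma_j}{2}u|^2$, one arrives at
\[
2\partial_t\mathcal I_j = -2\int\psi'_j\bigl|u_x + \tfrac{i\tilde\sigma_j}{2}u\bigr|^2dx + \int\psi'_j\bigl[\tfrac{\tilde\sigma_j}{4}|u|^4 + |u|^2\Im(u\bar u_x)\bigr]dx + \tfrac{1}{2}\int|u|^2\psi'''_j\,dx - \int g\,\tfrac{h\psi'(h)}{2(t+a)}\,dx,
\]
where $g = \tfrac{\tilde\sigma_j}{2}|u|^2 + \Im(u\bar u_x)$. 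The first term is nonpositive, which is the source of monotonicity; the remaining three terms must be shown to be time-integrable with the required bound.

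The second step is geometric localization. The condition~\eqref{eq:speed-frequency-ratios} and definition~\eqref{eq:def-sigma} imply $\tilde\sigma_j \in (\tilde c_{j-1}(0), \tilde c_j(0))$ with positive gap of order $c_\star^\sigma$ for $\delta$ and $1/L$ small; combined with Proposition~\ref{prop:modulation} this gives $|\tilde x_k(t) - (x_j^\sigma + \tilde\sigma_j t)| \geq L/8 + (c_\star^\sigma/2)t$ for every $k$, and therefore by~\eqref{eq:exp-decay} and Lemma~\ref{lem:interaction-estimates},
\[
\sum_{k=1}^N\bigl(|\tilde R_k| + |\partial_x \tilde R_k|\bigr) \lesssim e^{-\frac{1}{8}\omega_\star(L + c_\star^\sigma t)} \quad\text{on}\quad \operatorname{supp}\psi'_j \subset\{|x - x_j^\sigma - \tilde\sigma_j t|\leq \sqrt{t+a}\}.
\]

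With $u = \sum_k\tilde R_k + \eps$, the cubic/quartic nonlinearities can be combined with the main negative quadratic via the identity
\[
2\bigl|u_x + \tfrac{i\tilde\sigma_j}{2}u\bigr|^2 - \tfrac{\tilde\sigma_j}{4}|u|^4 - |u|^2\Im(u\bar u_x) = 2\bigl|u_x + i\bigl(\tfrac{\tilde\sigma_j}{2} - \tfrac{|u|^2}{4}\bigr)u\bigr|^2 + \tfrac{\tilde\sigma_j}{4}|u|^4 - \tfrac{1}{8}|u|^6,
\]
yielding a sign-definite integrand up to a $|u|^6$ correction; this correction is absorbed since $\norm{u}_{L^\infty(\operatorname{supp}\psi'_j)} \lesssim \norm{\eps}_{H^1} + e^{-\frac{1}{8}\omega_\star(L+c_\star^\sigma t)}$ is small by the bootstrap assumption and Sobolev embedding. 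The cutoff term is controlled by $\|\psi'''_j\|_{L^\infty}\lesssim (t+a)^{-3/2}$; writing $|u|^2 \leq 2|\eps|^2 + Ce^{-\frac{1}{4}\omega_\star(L+c_\star^\sigma t)}$ on $\operatorname{supp}\psi'_j$ gives $\int|u|^2\psi'''_j\,dx \lesssim (t+a)^{-3/2}\norm{\eps}_{L^2}^2 + (t+a)^{-1}e^{-c(L+t)}$. The scaling correction is treated by analogous decomposition of $g$ combined with Cauchy-Schwarz and the bootstrap smallness of $\norm{\eps}_{H^1}$ to extract a factor of $\norm{\eps}^2$.

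Integration in time then exploits the crucial choice $a = L^2/64$: $\int_0^t (s+a)^{-3/2}\,ds \leq 2/\sqrt{a} = 16/L$, producing the claimed $L^{-1}\sup_s\norm{\eps(s)}_{L^2}^2$ term, while the exponential soliton contributions integrate to $\lesssim e^{-\frac{1}{16}\omega_\star(L + c_\star^\sigma t)}$. The main technical difficulty I anticipate is ensuring that the scaling correction $-\int g\cdot h\psi'(h)/(2(t+a))\,dx$ falls into the favorable $(s+a)^{-3/2}$ decay class rather than the logarithmically-growing $(s+a)^{-1}$ class one obtains from crude pointwise estimates; the remedy is to exploit the smallness of $\eps$ via the decomposition $u = \sum\tilde R_k + \eps$ and Cauchy-Schwarz with $\norm{h\psi'(h)/(2(t+a))}_{L^2}\lesssim (t+a)^{-3/4}$, together with absorbing borderline quadratic-in-$u_x$ pieces into the always-available main negative quadratic term.
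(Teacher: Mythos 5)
Your derivative computation is essentially the paper's: the paper obtains exactly the same identity by first performing the Galilean change of variables $u(t,x)=e^{i\frac{\tilde\sigma_j}{2}(x-x_j^\sigma-\frac{\tilde\sigma_j}{2}t)}v(t,x-x_j^\sigma-\tilde\sigma_j t)$, whereas you work directly with the local mass/momentum conservation laws and the transport identity for $\psi_j$; the geometric localization of the solitons away from $\operatorname{supp}\psi_j'$, the choice $a=L^2/64$, and the final time integration also coincide with the paper's Claim~\ref{claim:1} and conclusion. Your handling of the $|u|^6$ remainder via $L^\infty$-smallness of $u$ on $\operatorname{supp}\psi_j'$ is a legitimate alternative to the paper's route through Claim~\ref{claim:2} and the localized Gagliardo--Nirenberg estimate of Lemma~\ref{lem:technical}. (Minor point: with the convention $\Im(u\bar u_x)$ the completed square is $2\abs{u_x-\tfrac{i\tilde\sigma_j}{2}u}^2$, not $2\abs{u_x+\tfrac{i\tilde\sigma_j}{2}u}^2$.)

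The genuine gap is exactly at the step you flag as the main difficulty: the scaling correction $-\tfrac12\int g\,\tfrac{h\psi'(h)}{2(t+a)}\,dx$ with $g=\tfrac{\tilde\sigma_j}{2}|u|^2+\Im(u\bar u_x)$. The pure mass part $\tfrac{\tilde\sigma_j}{2}|u|^2$ of $g$ contains no derivative, so ``absorbing quadratic-in-$u_x$ pieces into the main negative square'' cannot touch it, and both of your proposed estimates fail to put it in the $(s+a)^{-3/2}$ class: the pointwise bound $|h\psi'(h)|\lesssim 1$ gives a contribution $\lesssim (t+a)^{-1}\norm{\eps}_{L^2}^2$, whose time integral grows like $\log(1+t/a)$, while Cauchy--Schwarz with $\norm{h\psi'(h)/(2(t+a))}_{L^2}\lesssim (t+a)^{-3/4}$ only gives $(t+a)^{-3/4}\bigl(\norm{\eps}_{H^1}^2+e^{-cL}\bigr)$, whose integral grows like $(t+a)^{1/4}$; neither is admissible since $t^\star$ is eventually taken to $\infty$. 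The missing ingredient is an exact cancellation, not an estimate: $\tfrac{\tilde\sigma_j}{2}|u|^2+\Im(u\bar u_x)=\Im\bigl(u\,\overline{(u_x-\tfrac{i\tilde\sigma_j}{2}u)}\bigr)$, which is precisely what the paper's Galilean transformation encodes (in the $v$ variables $\mathcal I_j$ carries \emph{no} mass density, only $\Im(v\bar v_x)$). With this rewriting, Young's inequality splits the correction into $\tfrac{1}{8\sqrt{t+a}}\psi'\,\abs{u_x-\tfrac{i\tilde\sigma_j}{2}u}^2$, which is absorbed by your main negative square, plus $\tfrac{C}{(t+a)^{3/2}}h^2\psi'|u|^2\leq \tfrac{C}{(t+a)^{3/2}}\psi'|u|^2$, which the Claim~\ref{claim:1}-type bound controls by $(t+a)^{-3/2}\bigl(\norm{\eps}_{L^2}^2+e^{-\frac{1}{16}\omega_\star(c_\star^\sigma t+L)}\bigr)$ and which is then time-integrable with the $1/L$ gain. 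Without this identity (or the equivalent change of variables) your argument does not close at the integration-in-time step.
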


\begin{proof}[Proof of Proposition~\ref{prop:monotonicity-two}]
  Fix $j\in\{2,\dots,N\}$. To express the time derivative of $\mathcal I_{j
  }$ in a form to which we 
  can give a sign, we will use a Galilean transformation. 
  We define $v$ by
  \[
  u(t,x)=e^{i\frac{\tilde\sigma_j }{2}\left(x-x_j^\sigma-\frac{\tilde\sigma_j }{2}t\right)}v(t,x-x_j^\sigma-\tilde\sigma_j  t).
  \]
  We insist on the fact that since~\eqref{eq:nls} is not Galilean
  invariant, \emph{$v$ is not a solution
    of~\eqref{eq:nls} anymore}. It satisfies the modified equation
  \[
  iv_t+v_{xx}+i|v|^2v_x-\frac {\tilde\sigma_j}2|v|^2v=0,
  \]
  and we have
  \[
  \mathcal I_{{j}}(t)=\frac12\int_{\R}\Im(v \bar v_x) \psi \left(\frac{x}{\sqrt{t+a}}\right) dx.
  \]
  One realizes that the advantage of introducing $\mathcal I_{{j}}$ is that there is
  now no mass factor in the expression of $\mathcal I_j$ in terms of $v$.
  Computing the time derivative, we obtain
  \begin{multline*}
    \frac{\partial}{\partial t}
    \mathcal I_{{j}}(t)=
    -\frac{1}{\sqrt{t+a}}\int_{\R}\left(|v_x|^2+\frac12\Im\left(|v|^2\bar
        vv_x\right)+\frac{{\tilde\sigma_j}}{8}|v|^4 \right) \psi_x
    \left(\frac{x}{\sqrt{t+a}}\right) dx\\
    +\frac{1}{4(t+a)^{\frac32}}\int_{\R}|v|^2\psi_{xxx}\left(\frac{x}{\sqrt{t+a}}\right)
    dx\\
    -\frac{1}{4(t+a)^{\frac32}}\int_{\R}\Im(xv \bar v_x) \psi_x \left(\frac{x}{\sqrt{t+a}}\right) dx.
  \end{multline*}
  We distribute the last term between the quadratic terms. Using
 Young's inequality, we have
  \begin{multline*}
    \left|\frac{1}{4(t+a)^{\frac32}}\int_{\R}\Im(xv \bar v_x) \psi_x
      \left(\frac{x}{\sqrt{t+a}}\right) dx\right|\leq \\
     \frac{1}{8\sqrt{t+a}}\int_{\R}|v_x|^2 \psi_x
    \left(\frac{x}{\sqrt{t+a}}\right) dx+\frac{1}{8 (t+a)^{\frac32}}\int_{\R}|v|^2\left(\frac{x}{\sqrt{t+a}}\right)^2 \psi_x
    \left(\frac{x}{\sqrt{t+a}}\right) dx.
  \end{multline*}
  In addition, since $\psi_x$ is supported on $[-1,1]$, we have 
  \[
  \frac{1}{8 (t+a)^{\frac32}}\int_{\R}|v|^2\left(\frac{x}{\sqrt{t+a}}\right)^2 \psi_x
  \left(\frac{x}{\sqrt{t+a}}\right) dx\leq \frac{1}{8 (t+a)^{\frac32}}\int_{\R}|v|^2 \psi_x
  \left(\frac{x}{\sqrt{t+a}}\right) dx.
  \]
  We also apply Young's inequality to the derivative part of the nonlinear term:
  \begin{multline*}
    \left|
      \frac{1}{\sqrt{t+a}}\int_{\R}\frac12\Im\left(
        |v|^2\bar
        vv_x\right) \psi_x
      \left(
        \frac{x}{\sqrt{t+a}}
      \right) dx\right|\leq \\
    \frac{1}{8\sqrt{t+a}}\int_{\R}|v_x|^2 \psi_x
    \left(\frac{x}{\sqrt{t+a}}\right) dx+\frac{1}{2\sqrt{t+a}}\int_{\R}|v|^6 \psi_x \left(\frac{x}{\sqrt{t+a}}\right) dx.
  \end{multline*}
  Summarizing, we have obtained that
  \begin{multline}
    \label{eq:6}
    \frac{\partial}{\partial t}
    \mathcal I_{{j}}(u)\leq 
    -\frac{1}{\sqrt{t+a}}\int_{\R}\left(\frac34|v_x|^2-\frac12|v|^6+\frac{{\tilde\sigma_j}}{8}|v|^4 \right) \psi_x
    \left(\frac{x}{\sqrt{t+a}}\right) dx\\
    +\frac{1}{4(t+a)^{\frac32}}\int_{\R}|v|^2\left(\frac12\psi_x+\psi_{xxx}\right)\left(\frac{x}{\sqrt{t+a}}\right)
    dx.
  \end{multline}
  By assumption~\eqref{eq:speed-frequency-ratios} we have $\sigma_j>0$, thus we also have
  $\tilde\sigma_j>0$ (for
  $\delta$ and $1/L$ small enough). Therefore, to obtain the (quasi)-monotonicity of
  $\mathcal I_j$, it is sufficient to bound the $L^2$-term and the nonlinear term
  with power $6$. This is allowed by the following claims.

  \begin{claim}\label{claim:1}
    For $\delta$ and $1/L$ small enough and for any $t\in[0,t^\star]$, we
    have
    \[
    \int_{|x|<\sqrt{t+a}}|v|^2dx\leq  2\norm{\eps}_{L^2}^2+O\left(e^{-\frac{1}{16}\omega_\star\left( c_\star^\sigma t+L\right)}\right).
    \] 
  \end{claim}

  \begin{claim}\label{claim:2}
    For $\delta$ and $1/L$ small enough and for any $t\in[0,t^\star]$, we
    have
    \begin{multline*}
      \int_{\R}|v|^6 \psi_x\left(\frac{x}{\sqrt{t+a}}\right) dx\\
      \leq \frac14\int_{\R}|v_x|^2\psi_x
      \left(\frac{x}{\sqrt{t+a}}\right) dx+\frac{1}{t+a}\norm{\eps}_{L^2}^2+O\left(e^{-\frac{1}{16}\omega_\star\left( c_\star^\sigma t+L\right)}\right).
    \end{multline*}
  \end{claim}

  \begin{proof}[Proof of Claim~\ref{claim:1}]
    By definition of $v$ as a Galilean transform of $u$, we have 
    \begin{multline}
      \label{eq:4}
      |v(t,x)|^2=|u(t,x+x_j^\sigma+\tilde\sigma_j t)|^2
\\
\leq 2\sum_{k=1}^N \left|\tilde
        R_k(t,x+x_j^\sigma+\tilde\sigma_j t)\right|^2+2\left|\eps(t,x+x_j^\sigma+\tilde\sigma_j t)\right|^2.
    \end{multline}
    By exponential decay of the solitons profiles and the control
~\eqref{eq:control} on the
    modulation parameters $(\tilde \omega_k)$ and $(\tilde c_k)$, we have
    \[
    \sum_{k=1}^N \left|\tilde
      R_k(t,x+x_j^\sigma+\tilde\sigma_j t)\right|^2\lesssim \sum_{k=1}^N
    e^{-\omega_\star|x-\tilde x_k(t)+x_j^\sigma+\tilde\sigma_j t|}.
    \]
    Assume that $|x|<\sqrt{t-a}$.
    We have 
    \[
    |x-\tilde x_k(t)+x_j^\sigma+\tilde\sigma_j t|\geq |\tilde x_k(t)-x_j^\sigma-\tilde\sigma_j t|-|x|,
    \]
    which for $|x|<\sqrt{t+a}<\sqrt{t}+\sqrt{a}=\sqrt{t}+\frac L8$ gives
    \[
    |x-\tilde x_k(t)+x_j^\sigma+\tilde\sigma_j t|\geq |\tilde
    x_k(t)-x_j^\sigma-\tilde\sigma_j t|-\sqrt{t}-\frac L8.
    \]
    If $k\geq j$, then using  the dynamical system~\eqref{eq:dynamical} verified by the
    modulation parameters we get  
    \[
    \partial_t\tilde x_k\geq
    \partial_t \tilde  x_j\geq c_j-\frac{c_\star^\sigma}{8}.
    \]
    Since in addition $\tilde x_k(0)\geq \tilde x_j(0)$, we have 
    \[
    \tilde x_k(t)-x_j^\sigma-\tilde\sigma_j t\geq
    \left(c_j-\frac{c_\star^\sigma}{8}\right)t+\tilde x_j(0)
    -x_j^\sigma-\tilde\sigma_j t
    \geq \frac{c_\star^\sigma}{8}t+\frac{\tilde x_j(0)-\tilde x_{j-1}(0)}{2},
    \]
    where for the last inequality we have used $x_j^\sigma=(\tilde
    x_j(0)+\tilde x_{j-1}(0))/2$ and
    $c_j-\sigma_j\geq \frac{c_\star^\sigma}{4}$. Therefore, using
    $\tilde x_j(0)>\tilde x_{j-1}(0)+L/2$, we get
    \begin{equation}
      \label{eq:2}
      |x-\tilde x_k(t)+x_j^\sigma+\tilde\sigma_j t|\geq \frac{c_\star^\sigma}{8}t-\sqrt{t}+\frac L8.
    \end{equation}
    Choose now $L$ large enough so that
    $\min_{t\geq 0}\left(\frac{c_\star^\sigma}{16}t-\sqrt{t}+\frac
      {L}{16}\right)>0$. Then
    \[
    \frac{c_\star^\sigma}{8}t-\sqrt{t}+\frac
    L8=\frac{c_\star^\sigma}{16}t+\frac
    {L}{16}+\left(\frac{c_\star^\sigma}{16}t-\sqrt{t}+\frac
      {L}{16}\right)\geq \frac{c_\star^\sigma}{16}t+\frac
    {L}{16},
    \]
    and we infer from~\eqref{eq:2} that 
    \[
    |x-\tilde x_k(t)+x_j^\sigma+\tilde\sigma_j t|\geq \frac{c_\star^\sigma}{16}t+\frac
    {L}{16}.
    \]
    Arguing in a similar fashion for $k\leq j-1$ allows us to obtain, for
    $L$ large enough,
    \begin{equation}
      \label{eq:5}
      \sum_{k=1}^N \left|\tilde
        R_k(t,x+x_j^\sigma+\tilde\sigma_j t)\right|^2\lesssim \sum_{k=1}^N
      e^{-\frac{1}{16}\omega_\star\left( c_\star^\sigma t+L\right)}.
    \end{equation}
    Combining~\eqref{eq:4} and~\eqref{eq:5} gives the desired conclusion.
  \end{proof}

For further reference, we state here a Lemma which can be obtained using
similar arguments as in Claim~\ref{claim:1}.
\begin{lemma}[Interaction Estimates
  Two]\label{lem:interaction-est-2}
There exists a function $g\in L^\infty_tL^1_x(\R,\R^d)\cap
L^\infty_tL^\infty_x(\R,\R^d)$ such that for all $j,k=1,\dots,N$ such
that $j\neq k$, we have
\begin{align*}
e^{-\omega_\star|x-\tilde x_j|}\chi_k(t,x)&\leq
e^{-\frac1{32}\omega_\star(c_\star^\sigma t+L)}g(t,x),
\end{align*}
  where 
  \begin{equation}
    \label{eq:9}
    c^\sigma_\star=\min\left\{ |\tilde\sigma_j-c_k|;\;j,k=1,\dots,N,\,
      j\neq 1 \right\}.
  \end{equation}
\end{lemma} 

Let us recall without proof the following technical lemma from~\cite{MaMeTs06,Me01}
that is used for the proof of Claim~\ref{claim:2}.

\begin{lemma}\label{lem:technical}
  Let $w\in H^1(\R)$ and let $h\geq 0$ be a $\mathcal C^2$ bounded
  function such that $\sqrt{h}$ is $\mathcal C^1$ and $(h_x)^2\lesssim
  h$. Then
  \[
  \int_{\R}|w|^6hdx\leq 8\left(\int_{\mathrm{supp}(h)}|w|^2dx\right)^2\left(\int_{\R}|w_x|^2hdx+\int_{\R}|w|^2\frac{(h_x)^2}{h}dx\right),
  \]
  where $\mathrm{supp}(h)$ denotes the support of $h$.
\end{lemma}

  \begin{proof}[Proof of Claim~\ref{claim:2}]
    From the technical result Lemma~\ref{lem:technical}, we infer that 
    \begin{multline*}
      \int_{\R}|v|^6\psi_x\left(\frac{x}{\sqrt{t+a}}\right)dx
      \leq 8\left(\int_{|x|<\sqrt{t+a}}|v|^2dx\right)^2 \times
      \\\times\left(\int_{\R}|v_x|^2\psi_x\left(\frac{x}{\sqrt{t+a}}\right)dx+\frac{1}{t+a}\int_{\R}|v|^2\frac{(\psi_{xx})^2}{\psi_x}\left(\frac{x}{\sqrt{t+a}}\right)dx\right).
    \end{multline*}
    Using Claim~\ref{claim:1} and the fact that by construction
    $\frac{(\psi_{xx})^2}{\psi_x}\lesssim 1$, we get for $\delta$ and $1/L$
    small enough that
    \begin{multline*}
      \int_{\R}|v|^6\psi_x\left(\frac{x}{\sqrt{t+a}}\right)dx
      \\\leq
      \frac14\int_{\R}|v_x|^2\psi_x\left(\frac{x}{\sqrt{t+a}}\right)dx+\frac{1}{2(t+a)}\int_{|x|<\sqrt{t+a}}|v|^2dx,
      \\\leq \frac14\int_{\R}|v_x|^2\psi_x\left(\frac{x}{\sqrt{t+a}}\right)dx+\frac{1}{t+a}\norm{\eps}_{L^2}^2+O(e^{-\frac{1}{16}\omega_\star\left(
          c_\star^\sigma t+L\right)}).
    \end{multline*}
    This finishes the proof of Claim~\ref{claim:2}.
  \end{proof}

  Let us now conclude the
  proof of Proposition~\ref{prop:monotonicity-two}. Coming back to
~\eqref{eq:6} and using $\tilde\sigma_j>0$ and Claims~\ref{claim:1} and~\ref{claim:2}, we
  get
  \[
  \partial_t\mathcal I_j(t)\lesssim 
  \frac{1}{(t+a)^{\frac32}}\norm{\eps(t)}_{L^2}^2+ e^{-\frac{1}{16}\omega_\star\left( c_\star^\sigma t+L\right)}.
  \]
  Integrating between $0$ and $t$, we obtain (using in particular $a=L^2/64$)
  \[
  \mathcal I_j(t)-\mathcal I_j(0)\lesssim \frac{1}{L}\sup_{s\in[0,t^\star]}\norm{\eps(s)}_{L^2}^2+e^{-\frac{1}{16}\omega_\star\left( c_\star^\sigma t+L\right)},
  \]
  and this finishes the proof.
\end{proof}

For convenience, we also introduce here functionals similar to
$\mathcal I_j$ but with a different parameter $\sigma$. They will be
useful when we will control the modulation parameters. For
$j=2,\dots,N$, let $\tau_j$ be such that
\[
\tilde c_{j-1}(0)<\tau_j<\tilde c_j(0).
\]
For any $j=2,\dots,N$, we define
\begin{gather*}
  \psi_{j,\tau_j}(t,x)=\psi\left(\frac{x-x_j^\sigma-\tau_j
      t}{\sqrt{t+a}}\right),\quad
  \mathcal I_{j,\tau_j}(t)=\frac12\int_{\R}\left(\frac{\tau_j}{2}|u|^2+\Im(u \bar u_x) \right) \psi_{j,\tau_j} dx.
\end{gather*}
Then following the same proof as for Proposition
\ref{prop:monotonicity-two} we get the following result.
\begin{proposition}[Monotonicity Three]\label{prop:monotonicity-three}
  If $\delta$ and $1/L$ are small enough,
  then for any $j=2,\dots,N$ and $t\in [0,t^\star]$ we
  have
  \[
  \mathcal I_{j,\tau_j}(t)-\mathcal I_{j,\tau_j}(0)\lesssim \frac{1}{L}\sup_{s\in
    [0,t^\star]}\norm{\eps(s)}^2_{L^2}+ e^{-\frac{1}{16}\omega_\star\left( c_\star^{\tau} t+L\right)},
  \]
  where 
  \[
    c_\star^{\tau}=\min\left\{ |\tau_j-c_k|;\;j,k=1,\dots,N,\,j\neq 1 \right\}.
  \]
\end{proposition}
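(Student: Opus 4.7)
The plan is to follow the proof of Proposition~\ref{prop:monotonicity-two} essentially verbatim, with $\tau_j$ playing the role of $\tilde\sigma_j$ and $c_\star^\tau$ the role of $c_\star^\sigma$. First I would introduce the Galilean transform adapted to the velocity $\tau_j$,
\[
u(t,x)=e^{i\frac{\tau_j}{2}\left(x-x_j^\sigma-\frac{\tau_j}{2}t\right)}v(t,x-x_j^\sigma-\tau_j t),
\]
so that $v$ satisfies the modified equation $iv_t+v_{xx}+i|v|^2v_x-\frac{\tau_j}{2}|v|^2v=0$ and
\[
\mathcal{I}_{j,\tau_j}(t)=\frac{1}{2}\int_{\R}\Im(v\bar v_x)\,\psi\!\left(\frac{x}{\sqrt{t+a}}\right)dx.
\]
Computing $\partial_t\mathcal{I}_{j,\tau_j}$ and redistributing the cross terms via Young's inequality exactly as in the derivation of~\eqref{eq:6} yields the same differential inequality with $\tau_j$ in place of $\tilde\sigma_j$.

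The next step is to establish the analogs of Claim~\ref{claim:1} and Claim~\ref{claim:2}. The only nontrivial ingredient for Claim~\ref{claim:1} is the lower bound on $|x-\tilde x_k(t)+x_j^\sigma+\tau_j t|$ for $k\neq j$ and $|x|<\sqrt{t+a}$. For $\delta$ and $1/L$ small enough, the estimate~\eqref{eq:initial-time} together with $\tilde c_{j-1}(0)<\tau_j<\tilde c_j(0)$ forces $c_{j-1}<\tau_j<c_j$, and by the definition of $c_\star^\tau$ we have $c_j-\tau_j\geq c_\star^\tau$ and $\tau_j-c_{j-1}\geq c_\star^\tau$. The modulation equation~\eqref{eq:dynamical} then yields, for $k\geq j$,
\[
\tilde x_k(t)-x_j^\sigma-\tau_j t\geq \left(c_j-\tau_j-\tfrac{c_\star^\tau}{8}\right)t+\tfrac{1}{2}\bigl(\tilde x_j(0)-\tilde x_{j-1}(0)\bigr),
\]
with a symmetric bound for $k\leq j-1$. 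Combining with $|x|<\sqrt{t+a}\leq \sqrt{t}+L/8$ and choosing $L$ large enough to absorb the $\sqrt{t}$ term gives
\[
|x-\tilde x_k(t)+x_j^\sigma+\tau_j t|\geq \tfrac{1}{16}\omega_\star\bigl(c_\star^\tau t+L\bigr),
\]
from which the desired $L^2$ bound follows. The Gagliardo--Nirenberg-type estimate of Claim~\ref{claim:2} then transfers unchanged, as it relies only on Lemma~\ref{lem:technical} applied to $v$ and on the new Claim~\ref{claim:1}.

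Inserting the two claims into the differential inequality and integrating from $0$ to $t$ with $a=L^2/64$ produces the stated bound. I expect the one genuinely new technical point to be the careful treatment of the $\frac{\tau_j}{8}|v|^4$ term in the analog of~\eqref{eq:6}: when $\tau_j>0$ it has the favorable sign as in Proposition~\ref{prop:monotonicity-two}, whereas in the case $\tau_j\leq 0$ (which can only occur for $j=2$ when $c_1<0$) one must separately control $\int|v|^4\psi_x$ by Gagliardo--Nirenberg interpolation between the local $L^2$ bound from the analog of Claim~\ref{claim:1} and the uniform $H^1$ control on $v$ inherited from the bootstrap, absorbing a small multiple of $\int|v_x|^2\psi_x$ back on the good side. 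Once dispatched, the rest of the argument is a routine transcription of the steps already carried out.
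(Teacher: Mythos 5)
Your proposal is correct and follows essentially the same route as the paper, which proves Proposition~\ref{prop:monotonicity-three} simply by repeating the proof of Proposition~\ref{prop:monotonicity-two} with $\tau_j$ in place of $\tilde\sigma_j$ and $c_\star^\tau$ in place of $c_\star^\sigma$ (Galilean change of frame at speed $\tau_j$, the analogs of Claims~\ref{claim:1} and~\ref{claim:2}, then integration in time with $a=L^2/64$). Your extra discussion of the possibly nonpositive quartic coefficient when $\tau_j\leq 0$ is a legitimate refinement that the paper glosses over (in its applications the $\tau_j$ are taken close to the positive $\tilde\sigma_j$), and your proposed absorption of $\int|v|^4\psi_x$ via the local $L^2$ smallness would indeed handle it; only note the small slip that the spatial separation bound should read $\frac{1}{16}\left(c_\star^\tau t+L\right)$, the factor $\omega_\star$ entering only in the exponential decay rate.
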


\section{Linearized Action Functional and Coercivity for $N$ Solitons}
\label{sec:line-acti-funct}

For $j=1,\dots,N$, we define an action functional related to the $j$-th
soliton by 
\[
S_j(v)=E(v)+\tilde\omega_j(0)M(v)+\tilde c_j(0)P(v).
\]

For the sum of $N$ solitons
we define an action-like functional $\mathcal{S}$ which will
correspond to $S_j$ locally around the $j$-th
soliton. The functional $\mathcal S$ is given by
\[
\mathcal{S}(t)=E(u(t))+\mathcal I(t),
\]
where $\mathcal I$ is the functional composed of localized masses and momenta
defined in~\eqref{eq:7}.

It is classical when working with solitons and related solutions
of nonlinear dispersive equations to introduce functionals related to the second variation of the
action. In our context, we will work with the functional $\mathcal H$,
obtained as follows. 

We set
\[
  \bar c_\star=\min\{ c_\star,c_\star^\sigma\},
\]
where $c_\star$ and $ c_\star^\sigma$ are
given by~\eqref{eq:c_star} and 
\eqref{eq:9}.

\begin{lemma}[Expansion of the action]
  \label{lem:expansion}
  For $t\in[0,t^\star]$, we have
  \begin{multline}\label{eq:exp0}
    \mathcal S(t)=\sum_{j=1}^NS_j(\phi_{\tilde\omega_j(0),\tilde c_j(0)})+\frac12\mathcal{H}(t)\\
    +\sum_{j=1}^NO\left(|\tilde\omega_j(t)-\tilde\omega_j(0)|^2\right)+o(\norm{\eps}_{H^1}^2)+O(e^{-\frac{1}{32}\omega_\star(\bar
      c_\star
      t+L)}),
  \end{multline}
  where
  \begin{multline*}
    \mathcal{H}(t)=\norm{\eps_x}_{L^2}^2+\sum_{j=1}^N\Im\int_{\R}
    \left(|\tilde
      R_j|^2\bar \eps\eps_x+\tilde R_j\partial_x\tilde
      R_j(\bar\eps)^2+\overline{\tilde R_j}\partial_x\tilde R_j|\eps|^2\right)dx
    \\+\sum_{j=1}^N\left(\tilde\omega_j(t)\int_{\R}|\eps|^2\chi_jdx+\tilde
      c_j(t)\Im \int_{\R}\eps\bar\eps_x\chi_jdx\right).
  \end{multline*}
\end{lemma}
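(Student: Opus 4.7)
The plan is to write $u = R + \eps$ with $R := \sum_{j=1}^N \tilde R_j$ and Taylor-expand $\mathcal S(t) = E(u) + \mathcal I(t)$ in powers of $\eps$. Since $E$ is quartic in $(u,u_x,\bar u,\bar u_x)$ and $\mathcal I$ is quadratic in $u$, this expansion is exact modulo a cubic remainder and reads
\[
\mathcal S(t) = \bigl[E(R) + \mathcal I(R)\bigr] + \bigl[\dual{E'(R)}{\eps} + \mathcal I'(R)\cdot\eps\bigr] + \tfrac12 \dual{E''(R)\eps}{\eps} + \tfrac12 \mathcal I''(R)(\eps,\eps) + O(\norm{\eps}_{H^1}^3).
\]
I would then analyse the three groups (zeroth, first and second order in $\eps$) separately.

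For the zeroth-order piece, I expand the $N$-fold sums in $E(R)$ and $\mathcal I(R)$. Lemma~\ref{lem:interaction-estimates} lets me discard all cross terms $\tilde R_j \tilde R_k$ with $j\neq k$ at cost $O(e^{-\frac12\omega_\star|\tilde x_j-\tilde x_k|})$, which fits in the error $O(e^{-\frac1{32}\omega_\star(\bar c_\star t + L)})$; the placement argument from the proof of Claim~\ref{claim:1} analogously discards the localised contributions $\tilde R_k \chi_j$ with $j\neq k$. What survives is $\sum_j S_j(\tilde R_j)$. Using gauge and translation invariance of $E, M, P$, $S_j(\tilde R_j) = S_j(\phi_{\tilde\omega_j(t),\tilde c_j(t)})$; the map $(\omega,c) \mapsto S_j(\phi_{\omega,c})$ has vanishing gradient at $(\tilde\omega_j(0),\tilde c_j(0))$, because $\phi_{\omega,c}$ is a critical point of $E + \omega M + c P$ and this functional coincides with $S_j$ precisely at that point. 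Hence only a quadratic Taylor remainder $O(|\tilde\omega_j(t) - \tilde\omega_j(0)|^2 + |\tilde c_j(t) - \tilde c_j(0)|^2)$ survives, and the latter collapses to $O(|\tilde\omega_j(t) - \tilde\omega_j(0)|^2)$ by~\eqref{eq:wc}.

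For the first-order piece, the same interaction-and-cutoff reasoning reduces $\dual{E'(R)}{\eps} + \mathcal I'(R)\cdot\eps$ to $\sum_j \dual{E'(\tilde R_j) + \tilde\omega_j(0) \tilde R_j + \tilde c_j(0)\, i\partial_x \tilde R_j}{\eps}$ up to the exponential error (cross pairings such as $\psld{\tilde R_j \tilde R_k}{\eps}$ are absorbed via Young's inequality). But $\tilde R_j$ satisfies the profile equation at its own parameters $(\tilde\omega_j(t), \tilde c_j(t))$, namely $E'(\tilde R_j) + \tilde\omega_j(t) \tilde R_j + \tilde c_j(t)\, i \partial_x \tilde R_j = 0$, so after using~\eqref{eq:wc} the sum equals $-\sum_j (\tilde\omega_j(t) - \tilde\omega_j(0))\, \psld{\tilde R_j + i\mu_j \partial_x \tilde R_j}{\eps}$, which is \emph{exactly zero} by the third orthogonality condition in~\eqref{eq:ortho}. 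The $\mu_j$-twisted orthogonality built into Proposition~\ref{prop:modulation} is designed precisely to kill this linear term.

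For the second-order piece, collecting the quadratic-in-$\eps$ terms in $E$ (either from~\eqref{eq:explicit} or by direct expansion of $|R+\eps|^2(\bar R+\bar\eps)(R_x+\eps_x)$ followed by integration by parts) yields $\tfrac12\norm{\eps_x}_{L^2}^2 + \tfrac12\Im\int\bigl(|R|^2\bar\eps\eps_x + R R_x \bar\eps^2 + \bar R R_x|\eps|^2\bigr)dx$; splitting $|R|^2 = \sum_j |\tilde R_j|^2 + (\text{cross})$ (similarly for $R R_x$ and $\bar R R_x$) and absorbing the cross pieces via Lemma~\ref{lem:interaction-estimates} produces the first line of $\tfrac12\mathcal H(t)$. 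The quadratic contribution of $\mathcal I$ is already $\tfrac12\sum_j\bigl[\tilde\omega_j(0)\int|\eps|^2\chi_j dx + \tilde c_j(0)\Im\int\eps\bar\eps_x\chi_j dx\bigr]$, and swapping $\tilde\omega_j(0),\tilde c_j(0)$ for $\tilde\omega_j(t),\tilde c_j(t)$ delivers the remaining part of $\tfrac12\mathcal H(t)$ with error $\lesssim \max_j|\tilde\omega_j(t)-\tilde\omega_j(0)| \cdot \norm{\eps}_{H^1}^2 = o(\norm{\eps}_{H^1}^2)$. The principal technical obstacle throughout is the bookkeeping of interaction and cut-off mismatch contributions: each one must be steered into one of the three stated error classes using Lemma~\ref{lem:interaction-estimates} and the dynamical-placement argument of Claim~\ref{claim:1}, but no new ideas are required beyond those already developed in Sections~\ref{sec:modulation}--\ref{sec:local-proc}.
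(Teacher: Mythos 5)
Your proposal is correct and follows essentially the same route as the paper's proof: expand $E$ and $\mathcal I$ around $\sum_j\tilde R_j$, discard interaction and cut-off mismatch terms via Lemmas~\ref{lem:interaction-estimates} and~\ref{lem:interaction-est-2}, kill the linear term exactly through the profile equation at time-$t$ parameters combined with~\eqref{eq:wc} and the third orthogonality condition in~\eqref{eq:ortho}, Taylor-expand $\sum_j S_j(\tilde R_j(t))$ at the critical point to get the $O(|\tilde\omega_j(t)-\tilde\omega_j(0)|^2)$ error, and swap the weights $\tilde\omega_j(0),\tilde c_j(0)$ for their time-$t$ values in the quadratic piece. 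The only cosmetic differences (expanding in the parameters $(\omega,c)$ rather than in the functional argument for the zeroth-order term, and absorbing the weight swap directly as $o(\norm{\eps}_{H^1}^2)$ instead of via Young's inequality) do not change the argument.
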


\begin{proof}
  Writing $u=\sum_{j=1}^N\tilde R_j+\eps$, we expand in the components
  of $\mathcal S$. For the energy, we have 
\[
E(u)=E\bigg(\sum_{j=1}^N\tilde R_j\bigg)+E'\bigg(\sum_{j=1}^N\tilde R_j\bigg)\eps+\frac12\dual{E''\bigg(\sum_{j=1}^N\tilde R_j\bigg)\eps}{\eps}+o(\norm{\eps}_{H^1}^2).
\]
From Lemma~\ref{lem:interaction-estimates} (Interaction Estimates One), we have
\begin{align*}
E\bigg(\sum_{j=1}^N\tilde R_j\bigg)
&=\sum_{j=1}^N E\left(\tilde R_j\right)+O(e^{-\frac14\omega_\star
  (c_\star t+L)}),\\
E'\bigg(\sum_{j=1}^N\tilde R_j\bigg)\eps
&=\sum_{j=1}^N E'\left(\tilde R_j\right)\eps +O(e^{-\frac14\omega_\star
  (c_\star t+L)}),\\
\dual{E''\bigg(\sum_{j=1}^N\tilde R_j\bigg)\eps}{\eps}
&=\sum_{j=1}^N\dual{E''\left(\tilde R_j\right)\eps}{\eps}+O(e^{-\frac14\omega_\star
  (c_\star t+L)}).
\end{align*}
From Lemma~\ref{lem:interaction-est-2} (Interaction Estimates Two), we also have
\begin{align*}
  \frac12\int_{\R}|u|^2\chi_jdx&=\sum_{j=1}^NM(\tilde R_j)+2\sum_{j=1}^NM'(\tilde
                                 R_j)\eps+\frac12\int_{\R}|\eps|^2\chi_jdx +O(e^{-\frac{1}{32}\omega_\star
  (c_\star^\sigma t+L)}),\\
  \frac12\Im\int_{\R}u\bar u_x\chi_jdx&=\sum_{j=1}^NP(\tilde R_j)+2\sum_{j=1}^NP'(\tilde
                                 R_j)\eps+\frac12\Im\int_{\R}\eps\bar \eps_x\chi_jdx+O(e^{-\frac{1}{32}\omega_\star
  (c_\star^\sigma t+L)}).
\end{align*}
Recall that, for $j=1,\dots,N$, the function $\tilde R_j$ verifies
\[
E'(\tilde R_j)+\tilde\omega_j(t)M'(\tilde R_j)+\tilde c_j(t)P'(\tilde R_j)=0.
\]
Therefore, we have
\begin{multline}\label{eq:exp1}
  \mathcal S(t)=\sum_{j=1}^NS_j(\tilde
  R_j)+\sum_{j=1}^N(\tilde\omega_j(0)-\tilde \omega_j(t))M'(\tilde
  R_j)\eps +\sum_{j=1}^N(\tilde c_j(0)-\tilde c_j(t))P'(\tilde
  R_j)\eps 
\\
+\frac12\sum_{j=1}^N\dual{E''\left(\tilde R_j\right)\eps}{\eps}
  +\sum_{j=1}^N\tilde\omega_j(0) \frac12\int_{\R}|\eps|^2\chi_jdx
  +\sum_{j=1}^N\tilde c_j(0) \frac12\int_{\R}\eps\bar \eps_x\chi_jdx 
\\+
  O(e^{-\frac{1}{32}\omega_\star (bar c_\star t+L)})+o(\norm{\eps}_{H^1}^2).
\end{multline}
A Taylor expansion in $\tilde\omega_j$ gives (using that $\tilde R_j(0)$ is
a critical point of $S_j$)
\begin{equation}\label{eq:exp2}
S_j(\tilde R_j(t))=S_j(\tilde R_j(0))+O(|\tilde\omega_j(t)-\tilde\omega_j(0)|^2).
\end{equation}
Moreover, using phase and translation invariance, we have 
\begin{equation}\label{eq:exp3}
S_j(\tilde
R_j(0))=S_j(\phi_{\tilde \omega_j(0),\tilde c_j(0)}).
\end{equation}
Using the relation~\eqref{eq:wc} between $\tilde\omega$ and $\tilde c$
and the last orthogonality condition on $\eps$ in~\eqref{eq:ortho}, for all $j=1,\dots,N$ we
obtain
\begin{multline*}
  (\tilde\omega_j(0)-\tilde \omega_j(t))M'(\tilde R_j)\eps
  +(\tilde c_j(0)-\tilde c_j(t))P'(\tilde R_j)\eps\\=
  (\tilde\omega_j(0)-\tilde \omega_j(t))\left(M'(\tilde R_j)+\mu_j
    P'(\tilde R_j)\right)\eps
\\= (\tilde\omega_j(0)-\tilde \omega_j(t))\psld{\tilde R_j+i\mu_j\partial_x\tilde R_j}{\eps}=0.
\end{multline*}
From Young's inequality and using again  the relation
\eqref{eq:wc} between $\tilde\omega$ and $\tilde c$,  for all $j=1,\dots,N$  we have
\[
\abs*{(\tilde\omega_j(0)-\tilde \omega(t)) \frac12\int_{\R}|\eps|^2\chi_jdx
+(\tilde c_j(0)-c_j(t)) \frac12\int_{\R}\eps\bar \eps_x\chi_jdx }
\lesssim |\tilde\omega_j(0)-\tilde \omega(t)|^2+\norm{\eps}_{H^1}^4.
\]
Therefore, the term in the  second line of~\eqref{eq:exp1} becomes
\begin{multline*}
  \frac12\sum_{j=1}^N\dual{E''\left(\tilde R_j\right)\eps}{\eps}
  +\sum_{j=1}^N\tilde\omega_j(0) \frac12\int_{\R}|\eps|^2\chi_jdx
  +\sum_{j=1}^N\tilde c_j(0) \frac12\int_{\R}\eps\bar \eps_x\chi_jdx
  \\=\frac 12\mathcal H(t)+\sum_{j=1}^N O(|\tilde
  \omega_j(t)-\tilde\omega_j(0)|^2)+o(\norm{\eps}_{H^1}^2).
\end{multline*}
Together with~\eqref{eq:exp1},~\eqref{eq:exp2} and~\eqref{eq:exp3},
this gives the desired result~\eqref{eq:exp0}. 
\end{proof}

As a consequence of the coercivity of each linearized action $S_j''$
given by Proposition~\ref{prop:3}, we have global coercivity for
$\mathcal H$. 

\begin{lemma}
  [Coercivity]\label{lem:coercivity}
  There exists $\kappa>0$ such that 
  for all $t\in[0,t^\star]$ we have
  \[
  \mathcal{H}(t)\geq \kappa \norm{\eps}_{H^1}^2.
  \]
\end{lemma}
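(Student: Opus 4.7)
My plan is to reduce this multi-soliton statement to $N$ applications of the single-soliton coercivity from Proposition~\ref{prop:3}, by exploiting that $(\chi_j)_{j=1}^N$ is a non-negative partition of unity: $\sum_{j=1}^N\chi_j\equiv 1$ (since $\psi_1\equiv 1$ and $\psi_{N+1}\equiv 0$). I would introduce the localized perturbations $\bar\eps_j:=\sqrt{\chi_j}\,\eps$ and first prove a decomposition of the form
\[
\mathcal{H}(t)=\sum_{j=1}^N H_{\tilde\omega_j(t),\tilde c_j(t)}(\bar\eps_j)+O\bigl(L^{-2}+e^{-\alpha L}\bigr)\|\eps\|_{H^1}^2
\]
for some $\alpha>0$. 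The $L^2$ and momentum contributions match exactly because $|\bar\eps_j|^2=\chi_j|\eps|^2$ and $\Im\bigl(\bar\eps_j\overline{(\bar\eps_j)_x}\bigr)=\chi_j\Im(\eps\bar\eps_x)$. The kinetic term follows from the identity $\int\chi_j|\eps_x|^2\,dx=\|(\bar\eps_j)_x\|_{L^2}^2+\tfrac12\int\chi_j''|\eps|^2\,dx-\int|(\sqrt{\chi_j})'|^2|\eps|^2\,dx$, which after summing in $j$ (using $\sum\chi_j''\equiv 0$) and using the cut-off bound $(\chi_j')^2\lesssim \chi_j/(t+a)$ leaves only an $O(L^{-2})\|\eps\|_{L^2}^2$ residue. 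The nonlinear pieces involving $\tilde R_j$ or $\partial_x\tilde R_j$ are then localized to $\chi_j$ by inserting $1=\sum_k\chi_k$ and absorbing the off-diagonal contributions $\tilde R_j\chi_k$ ($k\neq j$) via Lemma~\ref{lem:interaction-est-2}, at the cost of an exponentially small error.

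For each $j$, I would then apply Proposition~\ref{prop:3} at the modulated parameters $(\tilde\omega_j(t),\tilde c_j(t))$, after the isometric change of variables $\bar\eta_j:=e^{-i\tilde\theta_j}\bar\eps_j(\cdot+\tilde x_j)$ that replaces $\tilde R_j$ by $\phi_{\tilde\omega_j(t),\tilde c_j(t)}$. The orthogonality conditions~\eqref{eq:ortho}, which are exact for $\eps$, become only approximate for $\bar\eps_j$, with errors of the form $\psld{\eps}{i\tilde R_j(\sqrt{\chi_j}-1)}=O(e^{-\alpha L})\|\eps\|_{L^2}$, since $\tilde R_j$ is exponentially concentrated on the region $\{\chi_j\equiv 1\}$; the same estimate holds for the other two conditions. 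A standard projection onto the three-dimensional generalized kernel then converts this approximate orthogonality into
\[
H_{\tilde\omega_j(t),\tilde c_j(t)}(\bar\eps_j)\geq\tilde\kappa\|\bar\eps_j\|_{H^1}^2-Ce^{-\alpha L}\|\eps\|_{L^2}^2,
\]
with a coercivity constant $\tilde\kappa>0$ uniform in $j$ and in $t\in[0,t^\star]$, because the control~\eqref{eq:control} keeps $(\tilde\omega_j(t),\tilde c_j(t))$ in a fixed compact subset of the admissibility region $\{4\omega>c^2\}$.

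Summing over $j$ and using $\sum_j\|\bar\eps_j\|_{H^1}^2=\|\eps\|_{H^1}^2+O(L^{-2})\|\eps\|_{L^2}^2$ finally yields
\[
\mathcal{H}(t)\geq\tilde\kappa\|\eps\|_{H^1}^2-C\bigl(L^{-2}+e^{-\alpha L}\bigr)\|\eps\|_{H^1}^2,
\]
from which the conclusion follows with, say, $\kappa=\tilde\kappa/2$ upon choosing $L_0$ large enough. The main obstacle I expect is the bookkeeping of three independent error sources---derivatives of the cut-offs $\chi_j$, exponential tails coupling distinct solitons through Lemma~\ref{lem:interaction-est-2}, and the approximate rather than exact orthogonality of the $\bar\eps_j$---and checking that each of them can be absorbed into the coercive bound by the largeness of $L_0$ (it is crucial here that smallness of $\delta_0$ alone would not suffice, since the interaction errors decay only with $L$).
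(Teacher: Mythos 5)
Your overall strategy is sound and is in fact close in spirit to the paper's: both proofs split $\mathcal H$ into per-soliton quadratic forms, invoke the single-soliton coercivity of Proposition~\ref{prop:3} at the modulated parameters on localized versions of $\eps$ whose orthogonality conditions~\eqref{eq:ortho} hold only up to exponentially small errors (handled by a small perturbation/projection), and resum using $\sum_j\chi_j\equiv 1$, absorbing all errors by taking $L_0$ large. The genuine difference is in the localization device. You cut off directly with $\sqrt{\chi_j}$, whereas the paper keeps $\chi_j$ but introduces a second, exponentially decaying weight $\Lambda_j(x)=\Lambda((x-\tilde x_j)/B)$ at an intermediate scale $1\ll B\ll L$, applies Proposition~\ref{prop:3} only to $z_j=\eps\sqrt{\Lambda_j}$, and handles the remaining region $\chi_j-\Lambda_j$ (far from the $j$-th soliton) by the strict positivity of the free form $|\eps_x|^2+\tilde\omega_j|\eps|^2+\tilde c_j\Im(\eps\bar\eps_x)$, which needs no orthogonality since $\tilde c_j^2<4\tilde\omega_j$. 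Your route is more economical (one scale instead of two, and your orthogonality errors are $O(e^{-cL})$ rather than $O(e^{-cB})$), and the algebraic identities you use ($|\bar\eps_j|^2=\chi_j|\eps|^2$, $\Im(\bar\eps_j\overline{(\bar\eps_j)_x})=\chi_j\Im(\eps\bar\eps_x)$, cancellation of the cross terms in the kinetic part via $\sum_j\chi_j'\equiv 0$, and localization of the $\tilde R_j$-terms through Lemma~\ref{lem:interaction-est-2}) are all correct.

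The one point you should not gloss over is the bound $(\partial_x\chi_j)^2\lesssim \chi_j/(t+a)$, which you need so that $\sqrt{\chi_j}$ is Lipschitz with small derivative and the commutator residue is $O(L^{-2})\norm{\eps}_{L^2}^2$. The paper only assumes $(\psi')^2\lesssim\psi$ and $(\psi'')^2\lesssim\psi'$; on the part of $\mathrm{supp}\,\chi_j$ where $\chi_j=1-\psi_{j+1}$ is small, your bound would require in addition $(\psi')^2\lesssim 1-\psi$, which is not among the stated hypotheses. This is fixable — one may simply choose $\psi$ with symmetric quadratic-type vanishing of $\psi'$ at both ends — but it must be imposed explicitly; avoiding exactly this issue is the purpose of the paper's weight $\Lambda_j$, for which $|\Lambda'|\lesssim\Lambda$ gives $|(\sqrt{\Lambda_j})'|\lesssim B^{-1}\sqrt{\Lambda_j}$ pointwise for free. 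With that extra (harmless) assumption on $\psi$, and the standard continuity argument making the constant of Proposition~\ref{prop:3} uniform over the compact parameter set provided by~\eqref{eq:control} (a point the paper also leaves implicit), your proof goes through.
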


Before doing the proof, we explain how to control $\norm{\eps}_{H^1}$ using the coercivity property
Lemma~\ref{lem:coercivity} and the first monotonicity result
Proposition~\ref{prop:monotonicity-one}. 

\begin{lemma}\label{lem:energetic}
  For all $t\in[0,t^\star]$, we have
  \[
  \norm{\eps(t)}_{H^1}^2\lesssim
  \frac{1}{L}\sup_{s\in[0,t]}\norm{\eps(s)}_{L^2}^2+\norm{\eps(0)}
  _{H^1}^2+\sum_{j=1}^N|\tilde\omega_j(t)-\tilde\omega_j(0)|^2+e^{-\frac{1}{32}\omega_\star L}.
  \]
\end{lemma}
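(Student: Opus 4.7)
The strategy is to compare $\mathcal H(t)$ to $\mathcal H(0)$ using the expansion of the action functional $\mathcal S$, together with the conservation of the energy $E(u)$ and the monotonicity of $\mathcal I$. The coercivity property from Lemma~\ref{lem:coercivity} then converts the resulting bound on $\mathcal H(t)$ into the desired bound on $\norm{\eps(t)}_{H^1}^2$.

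More precisely, first I would apply Lemma~\ref{lem:expansion} at time $t$ and also at time $0$ (where the $|\tilde\omega_j(0)-\tilde\omega_j(0)|^2$ terms obviously vanish), obtaining
\[
\tfrac12\mathcal H(t)-\tfrac12\mathcal H(0)=\bigl(\mathcal S(t)-\mathcal S(0)\bigr)+\sum_{j=1}^N O\bigl(|\tilde\omega_j(t)-\tilde\omega_j(0)|^2\bigr)+o\bigl(\norm{\eps(t)}_{H^1}^2\bigr)+o\bigl(\norm{\eps(0)}_{H^1}^2\bigr)+O\bigl(e^{-\frac{1}{32}\omega_\star L}\bigr),
\]
where the soliton action constants $\sum_j S_j(\phi_{\tilde\omega_j(0),\tilde c_j(0)})$ cancel out between the two expansions (they depend only on the parameters at $t=0$, which are fixed).

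Next I would use conservation of energy $E(u(t))=E(u(0))$ to see that $\mathcal S(t)-\mathcal S(0)=\mathcal I(t)-\mathcal I(0)$, and then invoke Proposition~\ref{prop:monotonicity-one} (Monotonicity One) to estimate this difference by $\frac{1}{L}\sup_{s\in[0,t]}\norm{\eps(s)}_{L^2}^2+e^{-\frac{1}{16}\omega_\star(c_\star^\sigma t+L)}$. The exponential term in $t$ is of course bounded by $e^{-\frac{1}{32}\omega_\star L}$ uniformly in $t\geq 0$. At the same time, a direct estimate of the quadratic form defining $\mathcal H(0)$ (Cauchy--Schwarz for the cross terms between $\eps(0)$ and $\tilde R_j(0),\partial_x\tilde R_j(0)$, which are bounded in $L^\infty$) gives $\mathcal H(0)\lesssim \norm{\eps(0)}_{H^1}^2$. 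Combining these ingredients yields
\[
\mathcal H(t)\lesssim \tfrac{1}{L}\sup_{s\in[0,t]}\norm{\eps(s)}_{L^2}^2+\norm{\eps(0)}_{H^1}^2+\sum_{j=1}^N|\tilde\omega_j(t)-\tilde\omega_j(0)|^2+o\bigl(\norm{\eps(t)}_{H^1}^2\bigr)+e^{-\frac{1}{32}\omega_\star L}.
\]

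Finally, Lemma~\ref{lem:coercivity} provides $\kappa\norm{\eps(t)}_{H^1}^2\leq \mathcal H(t)$, so the conclusion follows once the $o(\norm{\eps(t)}_{H^1}^2)$ term is absorbed into the left-hand side. The only subtlety — and the one step that deserves care — is this absorption: it requires that $\norm{\eps(t)}_{H^1}$ be uniformly small, which is guaranteed by the bootstrap assumption $u(t)\in\mathcal V(\delta,L,A_0)$ combined with the control \eqref{eq:control} on modulation parameters, provided $\delta$ and $1/L$ are chosen small enough (depending on $A_0$ and on the implicit constants). Once this is done, the stated inequality is obtained.
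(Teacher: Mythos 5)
Your proposal is correct and follows essentially the same route as the paper: expand $\mathcal S$ via Lemma~\ref{lem:expansion} at times $0$ and $t$, use energy conservation to reduce $\mathcal S(t)-\mathcal S(0)$ to $\mathcal I(t)-\mathcal I(0)$, control that difference by Proposition~\ref{prop:monotonicity-one}, bound $\mathcal H(0)\lesssim\norm{\eps(0)}_{H^1}^2$ by quadraticity, and conclude with the coercivity of Lemma~\ref{lem:coercivity}. Your explicit remark about absorbing the $o(\norm{\eps(t)}_{H^1}^2)$ term using the smallness provided by the bootstrap assumption is exactly the step left implicit in the paper, and it is handled correctly.
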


\begin{proof}
  From the Taylor-like expansion of $\mathcal S$ in Lemma
~\ref{lem:expansion} at $0$ and $t\in[0,t^\star]$, we get
  \begin{multline*}
    \mathcal S(t)-\mathcal S(0)
    =\frac12\left(\mathcal{H}(t)-\mathcal{H}(0)\right)
    \\
    +\sum_{j=1}^NO\left(|\tilde\omega_j(t)-\tilde\omega_j(0)|^2\right)+o(\norm{\eps(t)}_{H^1}^2)+o(\norm{\eps(0)}_{H^1}^2)+O(e^{-\frac{1}{32}\omega_\star
      L}).
  \end{multline*}
  By definition of $\mathcal S$ and conservation of the energy, we have
  \[
  \mathcal S(t)-\mathcal S(0)=\mathcal I(t)-\mathcal I(0).
  \]
  In addition, $\mathcal H$ is quadratic in $\eps$, hence
  \[
  \mathcal{H}(0)\lesssim \norm{\eps(0)}_{H^1}^2.
  \]
  Taking into account the coercivity of $\mathcal H$ given by Lemma
~\ref{lem:coercivity}, we obtain
  \begin{equation}
    \label{eq:3}
    \norm{\eps(t)}_{H^1}^2\lesssim
    \mathcal{H}(t)
    \lesssim
    \mathcal I(t)-\mathcal I(0)
    +\sum_{j=1}^N|\tilde\omega_j(t)-\tilde\omega_j(0)|^2+\norm{\eps(0)}_{H^1}^2+e^{-\frac{1}{32}\omega_\star
      L}.
  \end{equation}
  The conclusion then follows from the first monotonicity result
  Proposition~\ref{prop:monotonicity-one}.
\end{proof}

\begin{proof}[Proof of Lemma \ref{lem:coercivity}]
We write $\mathcal H(t)$ as
\begin{align*}
\mathcal H(t)=&\sum_{j=1}^N\int \big(|\varepsilon_x|^2+\tilde\omega_j(t)|\varepsilon|^2+\tilde c_j(t)\Im (\eps\bar\eps_x)\big)\chi_j(t)\,dx\\
 &\quad +\sum_{j=1}^N\Im\int_{\R}
    \left(|\tilde
      R_j|^2\bar \eps\eps_x+\tilde R_j\partial_x\tilde
      R_j(\bar\eps)^2+\overline{\tilde R_j}\partial_x\tilde R_j|\eps|^2\right)dx\\
    = & \sum_{j=1}^N\int \Big(|\varepsilon_x|^2+\tilde\omega_j(t)|\varepsilon|^2+\tilde c_j(t)\Im (\eps\bar\eps_x)\\
    &\quad+\Im
    \left(|\tilde
      R_j|^2\bar \eps\eps_x+\tilde R_j\partial_x\tilde
      R_j(\bar\eps)^2+\overline{\tilde R_j}\partial_x\tilde R_j|\eps|^2\right)\Big)\chi_j(t)\,dx+O\big(e^{-\frac{1}{16}\omega_\star L}\big)\|\varepsilon\|_{H^1}^2\\
      \triangleq & \sum_{j=1}^N \mathcal H_j(t) +O\big(e^{-\frac{1}{16}\omega_\star L}\big)\|\varepsilon\|_{H^1}^2
      .
\end{align*}
Now we need another cut-off function defined as in
\cite{MaMeTs06}. Let $\Lambda$ be a smooth positive function satisfying
\[
\Lambda=1 \mbox{ on } [-1,1], \quad \Lambda \sim e^{-|x|} \mbox{ on } \R,\quad |\Lambda'|\lesssim \Lambda.
\]
Moreover, we fix $B:1\ll B\ll L$, and denote $\Lambda_j(x)=\Lambda(\frac{x-\tilde x_j(t)}{B})$. Then for some small $c>0$,
\[
\chi_j\ge \Lambda_j-e^{-cL/B},\quad \mbox{supp} (\chi_j- \Lambda_j)\subset
\{x:|x-\tilde x_j(t)|\ge B\},\quad |\Lambda'_j|\lesssim \frac1B\Lambda_j.
\]
Let $z_j=\eps\sqrt{\Lambda_j}$. Then
\[
\left|\partial_x\eps\right|^2\Lambda_j=|\partial_xz_j|^2+\frac14|z_j|^2\left(\frac{\Lambda_j'}{\Lambda_j}\right)^2
-\Re\left(z_j\partial_x\bar z_j\frac{\Lambda_j'}{\Lambda_j}\right).
\]
This implies
\begin{multline}\label{11.45}
\int_\R|\partial_xz_j|^2dx-\frac
CB\int_R(|\partial_x z_j|^2+|z_j|^2)dx
\\\leq\int_\R\left|\partial_x\eps\right|^2\Lambda_jdx\leq
\\
\int_\R|\partial_xz_j|^2dx+\frac
CB\int_R(|\partial_x z_j|^2+|z_j|^2)dx.
\end{multline}
Moreover, we have
\[
\eps\partial_x\bar \eps=
\frac{z_j\partial_x\bar z_j}{\Lambda_j}-|z_j|^2\frac{\Lambda_j'}{2\Lambda_j^2},
\]
and therefore,
\begin{equation}\label{11.45-1}
\Im(\eps\partial_x\bar \eps)\Lambda_j=\Im(z\partial_x\bar z_j).
\end{equation}
Now, using the localization of $\tilde R_j$,  we rewrite $\mathcal{H}_j$ as
\begin{align}
\mathcal{H}_j(t)=&\int \Big[|\varepsilon_x|^2+\tilde\omega_j(t)|\varepsilon|^2+\tilde c_j(t)\Im (\eps\bar\eps_x)\notag\\
    &\quad+\Im
    \left(|\tilde
      R_j|^2\bar \eps\eps_x+\tilde R_j\partial_x\tilde
      R_j(\bar\eps)^2+\overline{\tilde R_j}\partial_x\tilde R_j|\eps|^2\right)\Big]\Lambda_j(t)\,dx\notag\\
&+\int \Big(|\eps_x|^2+\tilde \omega_j(t)|\eps|^2+\tilde c_j(t)\Im \big(\eps\bar\eps_x\big)\Big)(\chi_j-\Lambda_j)\,dx\notag\\
    &+O(e^{-\frac1{16}\omega_\star B})\|\eps\|_{H^1}^2.\label{16.32}
\end{align}
Using~\eqref{11.45} and~\eqref{11.45-1} we have
\begin{align}
&\int \Big[|\varepsilon_x|^2+\tilde\omega_j(t)|\varepsilon|^2+\tilde c_j(t)\Im (\eps\bar\eps_x)\notag\\
    &\quad+\Im
    \left(|\tilde
      R_j|^2\bar \eps\eps_x+\tilde R_j\partial_x\tilde
      R_j(\bar\eps)^2+\overline{\tilde R_j}\partial_x\tilde R_j|\eps|^2\right)\Big]\Lambda_j(t)\,dx\notag\\
\ge &\int \Big[|\partial_x z_j|^2+\tilde \omega_j(t)|z_j|^2+\tilde
      c_j(t)\Im \big(z_j \partial_x\bar z_j\big)\notag\\
    &+\Im
    \left(|\tilde
      R_j|^2\overline{ z_j}\partial_xz_j+\tilde R_j\partial_x\tilde
      R_j(\overline{ z_j})^2+\overline{\tilde R_j}\partial_x\tilde R_j|z_j|^2\right)\Big]\,dx -\frac CB\int |z_j|^2\,dx\notag\\
=&H_{\tilde \omega(t),\tilde c_j(t)}(z_j)-\frac CB\int |z_j|^2\,dx.\label{16.01}
\end{align}
From the the orthogonality conditions~\eqref{eq:ortho}, we have for $ j=1,\dots,N,$
  \begin{equation*}
    \abs*{\psld{z_j}{i\tilde R_{j}}}=\abs*{\psld{z_j}{\partial_x\tilde R_{j}}}
    =\abs*{\psld{z_j}{M'(\tilde R_j)+\mu_jP'(\tilde R_j)}}\lesssim e^{-\frac1{16}\omega_\star B}\|z_j\|_{L^2},
  \end{equation*}
  after suitable perturbation (for example, let $\tilde z_j=z_j+a_1i\tilde R_{j}+a_2\partial_x\tilde R_{j}+a_3(M'(\tilde R_j)+\mu_jP'(\tilde R_j))$ for
  $|a_j|\lesssim e^{-\frac1{16}\omega_\star B}\|z\|_{L^2}, j=1,2,3$), we obtain from Proposition~\ref{prop:3} and~\eqref{11.45}, that for some small constant $\kappa'>0$,
 \[
 H_{\tilde \omega(t),\tilde c_j(t)}(z_j)\ge \kappa'\int (|\partial_xz_j|^2+|z_j|^2)\,dx\ge \left(\kappa'-\frac CB\right)\int (|\partial_x\eps|^2+|\eps|^2)\Lambda_j\,dx.
 \]
Hence, from~\eqref{16.01}, we obtain that
\begin{multline}
\bigg|\int \Big[|\varepsilon_x|^2+\tilde\omega_j(t)|\varepsilon|^2+\tilde c_j(t)\Im (\eps\bar\eps_x)\\
  +\Im
    \left(|\tilde
      R_j|^2\bar \eps\eps_x+\tilde R_j\partial_x\tilde
      R_j(\bar\eps)^2+\overline{\tilde R_j}\partial_x\tilde R_j|\eps|^2\right)\Big]\Lambda_j(t)\,dx \bigg|\\
\ge  \kappa''\int (|\partial_x\eps|^2+|\eps|^2)\Lambda_j\,dx,\label{16.03}
\end{multline}
where $\kappa''=\kappa'-\frac {2C}B$.
Furthermore, since $\tilde c_j^2<4\tilde \omega_j$, we have
\[
\tilde \nu(|\partial_x\eps|^2+|\eps|^2)\ge|\partial_x\eps|^2+\tilde \omega_j(t)|\eps|^2+\tilde c_j(t)\Im \big(\bar{\eps }\partial_x\eps\big)\ge \nu (|\partial_x\eps|^2+|\eps|^2),
\]
for some small $\nu>0$ and $\tilde \nu=3(1+\omega_j^2)$.
Hence, using $\chi_j\ge \Lambda_j-e^{-cL/B}$, we find
\begin{multline}
 \int \Big[|\partial_x\eps|^2+\tilde \omega_j(t)|\eps|^2 +\tilde c_j(t)\Im \big(\bar{\eps }\partial_x\eps\big)\Big](\chi_j-\Lambda_j)\,dx\\
\ge
\nu' \int \big(|\partial_x\eps|^2+|\eps|^2\big)(\chi_j-\Lambda_j)\,dx,\label{16.31}
\end{multline}
where $\nu'=\nu-\widetilde \nu e^{-cL/B}$. Choosing $L$ large enough, we have $\nu'>0$.
Inserting~\eqref{16.03} and~\eqref{16.31} into~\eqref{16.32}, we obtain that
\[
\mathcal H_j(t)\ge 2\kappa\int \big(|\partial_x\eps|^2+|\eps|^2\big)\chi_j\,dx,
\]
where $\kappa=\frac12\min\{\nu', \kappa'' \}$.  Since
$\sum_{j=1}^N \chi_j=1$, this proves the lemma.
\end{proof}

\section{Control of the modulation parameters}
\label{sec:contr-modul-param}

With Lemma~\ref{lem:energetic} in hands, the only thing left to
control are the modulation parameters $\tilde \omega_j$. We prove the
following result.

\begin{lemma}\label{lem:control-modulation}
  For all $t\in[0,t^\star]$, we have
  \[
  \sum_{j=1}^N|\tilde\omega_j(t)-\tilde\omega_j(0)|\lesssim
  \sup_{s\in[0,t]}\norm{\eps(s)}_{H^1}^2+e^{-\frac{1}{32}\omega_\star
    L}.
  \]
\end{lemma}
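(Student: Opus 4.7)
The plan is to extract each $\Delta_j := \tilde\omega_j(t)-\tilde\omega_j(0)$ by combining the monotonicity of the localized functionals $\mathcal I_{j,\tau_j}$ from Proposition~\ref{prop:monotonicity-three} (used for two distinct choices of $\tau_j$) with mass and momentum conservation and the modulation orthogonalities~\eqref{eq:ortho}. For each $j=2,\dots,N$ I fix two parameters $\tau_j^{(1)},\tau_j^{(2)}\in(\tilde c_{j-1}(0),\tilde c_j(0))$ and expand $\mathcal I_{j,\tau_j}(u)$ around $u=\sum_k\tilde R_k+\eps$. Thanks to the Interaction Estimates in Lemmas~\ref{lem:interaction-estimates}--\ref{lem:interaction-est-2}, the cut-off $\psi_{j,\tau_j}$ restricts effectively to the solitons with index $k\geq j$ up to exponential errors. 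Setting $Q_k:=\psld{\eps}{i\partial_x\tilde R_k}$ and using the third orthogonality condition $\psld{\eps}{\tilde R_k}=-\mu_k Q_k$ to eliminate the $\tilde R_k$-component of the linear-in-$\eps$ term, I obtain
\[
\mathcal I_{j,\tau_j}(u)=\sum_{k\geq j}\Bigl(\tfrac{\tau_j}{2}M(\tilde R_k)+P(\tilde R_k)\Bigr)+\sum_{k\geq j}\Bigl(1-\tfrac{\tau_j\mu_k}{2}\Bigr)Q_k+O\bigl(\norm{\eps}_{H^1}^2+e^{-\frac{\omega_\star}{32}(\bar c_\star t+L)}\bigr).
\]

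Using the relation $\tilde c_k=c_k+\mu_k(\tilde\omega_k-\omega_k)$ from~\eqref{eq:wc} together with the explicit derivatives of $M,P$ in the Appendix, the soliton contribution becomes a smooth function of $(\tilde\omega_k)_{k\geq j}$ alone, and a Taylor expansion in $\Delta_k$ turns the difference $\mathcal I_{j,\tau_j}(u(t))-\mathcal I_{j,\tau_j}(u(0))$, bounded by Proposition~\ref{prop:monotonicity-three}, into a linear relation
\[
\sum_{k\geq j}A_{jk}(\tau_j)\Delta_k+\sum_{k\geq j}\bigl(1-\tfrac{\tau_j\mu_k}{2}\bigr)\bigl(Q_k(t)-Q_k(0)\bigr)=\mathcal E_j(\tau_j),
\]
with $A_{jk}(\tau_j)=\tfrac{\tau_j}{2}(\partial_\omega M+\mu_k\partial_c M)+(\partial_\omega P+\mu_k\partial_c P)$ at $(\omega_k,c_k)$ and an error $\mathcal E_j(\tau_j)=O\bigl(\sup_{s\leq t}\norm{\eps(s)}_{H^1}^2+\sum_k|\Delta_k|^2+e^{-\omega_\star L/32}\bigr)$. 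To convert the residuals $Q_k(t)-Q_k(0)$, which are only $O(\norm{\eps}_{L^2})$ term by term, into genuinely quadratic quantities, I similarly expand the two conservation identities $M(u(t))=M(u(0))$ and $P(u(t))=P(u(0))$ around $\sum_k\tilde R_k+\eps$ and re-use $\psld{\eps}{\tilde R_k}=-\mu_k Q_k$, which yields two additional global relations
\[
\sum_k\mu_k\bigl(Q_k(t)-Q_k(0)\bigr)=\sum_k(\partial_\omega M+\mu_k\partial_c M)\Delta_k+\mathcal E_M,
\]
\[
\sum_k\bigl(Q_k(t)-Q_k(0)\bigr)=-\sum_k(\partial_\omega P+\mu_k\partial_c P)\Delta_k+\mathcal E_P,
\]
with $\mathcal E_M,\mathcal E_P$ of the same type as $\mathcal E_j$.

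Combining the two conservation identities with the $2(N-1)$ monotonicity relations obtained for the two choices of $\tau_j$ gives a square $2N\times 2N$ system in the unknowns $\{\Delta_k,Q_k(t)-Q_k(0)\}_{k=1}^N$. Ordering the rows from $j=N$ down to $j=2$ and ending with the two conservation rows, and the columns accordingly from $k=N$ to $k=1$, this system is block lower-triangular with $2\times 2$ diagonal blocks indexed by $k$. After choosing each $\tau_j^{(1)},\tau_j^{(2)}$ so that the corresponding diagonal block is invertible---possible because $A_{jj}(\tau_j)$ and $1-\tau_j\mu_j/2$ are non-parallel affine functions of $\tau_j$ by the explicit formulas and the speed-frequency ratio assumption~\eqref{eq:speed-frequency-ratios}, which rules out the exceptional values---one inverts the system block-by-block starting from $k=N$, and finally uses the two conservation rows to close at $k=1$. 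Absorbing the $\sum_k|\Delta_k|^2$ piece into the left-hand side, which is allowed by the smallness of $|\Delta_k|$ already granted by Proposition~\ref{prop:modulation}, produces the announced bound.

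The main obstacle is precisely the presence of the linear-in-$\eps$ residuals $Q_k(t)-Q_k(0)$: naively these would contribute only an $O(\sup_s\norm{\eps(s)}_{L^2})$ term and spoil the claimed quadratic estimate. Overcoming this requires coupling the local monotonicity of the $\mathcal I_{j,\tau_j}$'s with the two global conservation laws, and it is here that the orthogonality structure of the modulation and the non-degeneracy condition~\eqref{eq:speed-frequency-ratios} are used in an essential way.
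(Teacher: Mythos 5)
There is a genuine gap, and it sits at the heart of your linear-system strategy: Propositions~\ref{prop:monotonicity-two} and~\ref{prop:monotonicity-three} are \emph{one-sided} (quasi-monotonicity) estimates. They say that $\mathcal I_{j,\tau_j}(t)-\mathcal I_{j,\tau_j}(0)$ is bounded \emph{above} by quadratic quantities; they give no lower bound, and indeed these functionals can a priori decrease by an amount of order $\norm{\eps}_{H^1}$. When you write the relation $\sum_{k\geq j}A_{jk}(\tau_j)\Delta_k+\sum_{k\geq j}(1-\tfrac{\tau_j\mu_k}{2})(Q_k(t)-Q_k(0))=\mathcal E_j(\tau_j)$ with $\mathcal E_j$ of quadratic size, you are silently promoting the monotonicity bound to a two-sided estimate. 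Without that, your $2N\times 2N$ ``system'' consists of $2$ genuine equalities (mass and momentum conservation) and $2(N-1)$ inequalities all pointing in the same direction (both of your thresholds $\tau_j^{(1)},\tau_j^{(2)}$ lie in the same interval, so the two relations for a given $j$ are nearly parallel linear combinations, not combinations of opposite sign), and such a system cannot be inverted block by block: the unknowns are only constrained to a half-space cone, not to a small ball. Mass and momentum conservation alone cannot supply the missing reverse bounds — this is exactly the structural difficulty of NLS-type multi-solitons as opposed to KdV.

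The paper closes this loop with two ingredients you do not use. First, the energy enters: by conservation of $E$ and the coercivity of $\mathcal H$ (Lemma~\ref{lem:coercivity}), the weighted sum $\mathcal I(t)-\mathcal I(0)=\mathcal S(t)-\mathcal S(0)$ is bounded \emph{below} by $-C$ times quadratic quantities (this is~\eqref{eq:3}/\eqref{eq:11}); combined with the individual upper bounds from Proposition~\ref{prop:monotonicity-two} and the positivity of the weights $\tilde c_j(0)-\tilde c_{j-1}(0)$, this yields the two-sided control of each $|\mathcal I_j(t)-\mathcal I_j(0)|$ (Claim~\ref{claim:split}). Second, instead of carrying the linear residuals $\psld{\eps}{i\partial_x\tilde R_k}$ as unknowns, the paper works with the combination $q_j=m+\mu_j p$, whose linearization at $\tilde R_j$ is $M'(\tilde R_j)+\mu_jP'(\tilde R_j)$ and is therefore annihilated by the third orthogonality condition in~\eqref{eq:ortho}; the four auxiliary thresholds $\tau_j^1,\dots,\tau_j^4$ are chosen so that differences of functionals isolate $\pm\epsilon_0\int q_j(u)\chi_j\,dx$, and the one-sided monotonicity of the auxiliary functionals together with the two-sided Claim~\ref{claim:split} then gives a two-sided bound on $Q_j(\tilde R_j(t))-Q_j(\tilde R_j(0))$ (Claim~\ref{claim:Qj}); the nondegeneracy~\eqref{23.03} of $\tfrac{d}{d\tilde\omega_j}Q_j(\phi_{\tilde\omega_j,\tilde c_j})$ finally converts this into control of $\tilde\omega_j$. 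Your final absorption of $\sum_k|\Delta_k|^2$ is fine, but to repair the proof you must either import the coercivity-based lower bound (Claim~\ref{claim:split}) or find another source of reverse inequalities; as written, the invertibility discussion of the block system addresses a non-issue while the real obstruction is the sign of the monotonicity estimates.
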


Getting control over the modulation parameters is not an easy task for
Schr\"odinger like equations. Indeed, a useful tool for that aim are
monotonicity properties of localized conservation laws. For the
Korteweg-de Vries equation, the localized mass satisfies this
monotonicity property and can be used directly to control the
modulation parameters (see~\cite{MaMeTs02}). For~\eqref{eq:nls} and
(NLS),
the monotonicity is verified only for the momentum and 
nothing similar is available for the mass (or the energy). This is the reason why one has to use several cut-off
functions,  in order to transfer the information from the momentum to the $Q_j$-quantity defined below. This was one of the main
ideas introduced in~\cite{MaMeTs06}. The argument here is however more
involved due to our choice of orthogonality conditions.

We first make the following claim. 

\begin{claim}\label{claim:split}
  For all $t\in[0,t^\star]$, we have 
  \[
  \sum_{j=2}^N|\mathcal I_j(t)-\mathcal I_j(0)|\lesssim
  \frac{1}{L}\sup_{s\in[0,t]}\norm{\eps(s)}_{H^1}^2
  +\norm{\eps(0)}_{H^1}^2
  +\sum_{j=1}^N|\tilde\omega_j(t)-\tilde\omega_j(0)|^2+e^{-\frac{1}{32}\omega_\star L}.
  \]
\end{claim}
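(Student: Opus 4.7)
The plan is to combine the one-sided monotonicity of Proposition~\ref{prop:monotonicity-two} with a matching \emph{lower} bound on the full functional $\mathcal{I}(t)-\mathcal{I}(0)$, which I extract from the coercivity/expansion machinery of Section~\ref{sec:line-acti-funct}. The point is that each individual $\mathcal{I}_j$ admits only a quasi-monotonicity upper bound, but the weighted combination $\sum d_j(\mathcal{I}_j(t)-\mathcal{I}_j(0))$ can be bounded on both sides.

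First, by Lemma~\ref{lem:decomposition} together with the conservation of mass and momentum along the flow of \eqref{eq:nls},
\[
\mathcal{I}(t)-\mathcal{I}(0)=\sum_{j=2}^{N}d_j\bigl(\mathcal{I}_j(t)-\mathcal{I}_j(0)\bigr),\qquad d_j:=\tilde c_j(0)-\tilde c_{j-1}(0).
\]
By the speed-frequency-ratio hypothesis \eqref{eq:speed-frequency-ratios} and the initial-time estimate \eqref{eq:initial-time}, each $d_j$ is bounded below by a positive constant independent of $\delta$ and $1/L$. Proposition~\ref{prop:monotonicity-two} provides the upper bound
\[
\mathcal{I}_j(t)-\mathcal{I}_j(0)\lesssim B:=\frac{1}{L}\sup_{s\in[0,t]}\norm{\eps(s)}_{L^2}^{2}+e^{-\frac{1}{16}\omega_\star(c_\star^\sigma t+L)}.
\]

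Second, I derive a lower bound on $\mathcal{I}(t)-\mathcal{I}(0)$ by repeating the argument behind Lemma~\ref{lem:energetic}. Conservation of energy gives $\mathcal{S}(t)-\mathcal{S}(0)=\mathcal{I}(t)-\mathcal{I}(0)$, and Lemma~\ref{lem:expansion} then reads
\[
\mathcal{I}(t)-\mathcal{I}(0)=\tfrac12\bigl(\mathcal{H}(t)-\mathcal{H}(0)\bigr)+\sum_{j=1}^{N}O\bigl(|\tilde\omega_j(t)-\tilde\omega_j(0)|^{2}\bigr)+o\bigl(\norm{\eps(t)}_{H^1}^{2}\bigr)+o\bigl(\norm{\eps(0)}_{H^1}^{2}\bigr)+O\bigl(e^{-\frac{1}{32}\omega_\star L}\bigr).
\]
Since $\mathcal{H}(0)\lesssim\norm{\eps(0)}_{H^1}^{2}$ and $\mathcal{H}(t)\geq 0$ by the coercivity of Lemma~\ref{lem:coercivity}, after absorbing the $o(\norm{\eps(t)}_{H^1}^{2})$ term, I obtain
\[
\mathcal{I}(t)-\mathcal{I}(0)\gtrsim -\norm{\eps(0)}_{H^1}^{2}-\sum_{j=1}^{N}|\tilde\omega_j(t)-\tilde\omega_j(0)|^{2}-e^{-\frac{1}{32}\omega_\star L}.
\]
Note that this step only uses Lemmas~\ref{lem:expansion} and~\ref{lem:coercivity}, not Lemma~\ref{lem:energetic} itself, so there is no circular reasoning even though Lemma~\ref{lem:energetic} will later be invoked alongside Claim~\ref{claim:split}.

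Third, for each fixed $k\in\{2,\ldots,N\}$ I write
\[
d_k\bigl(\mathcal{I}_k(t)-\mathcal{I}_k(0)\bigr)=\bigl(\mathcal{I}(t)-\mathcal{I}(0)\bigr)-\sum_{\substack{j=2\\ j\neq k}}^{N}d_j\bigl(\mathcal{I}_j(t)-\mathcal{I}_j(0)\bigr).
\]
Estimating the first term on the right by the lower bound just obtained, and each summand by the one-sided upper bound $\mathcal{I}_j(t)-\mathcal{I}_j(0)\lesssim B$, I get a lower bound on $\mathcal{I}_k(t)-\mathcal{I}_k(0)$ of exactly the form claimed. Combined with its already-established upper bound, this yields a two-sided estimate on $|\mathcal{I}_k(t)-\mathcal{I}_k(0)|$. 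Summing over $k=2,\ldots,N$ and using that the $d_k$ are uniformly bounded below produces the inequality of Claim~\ref{claim:split}. The only delicate point is that one must control each individual $|\mathcal{I}_k(t)-\mathcal{I}_k(0)|$, not just their linear combination, which is precisely what the one-sided bounds on the other indices $j\neq k$ allow.
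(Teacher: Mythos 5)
Your proposal is correct and follows essentially the same route as the paper: the decomposition of $\mathcal I$ plus mass/momentum conservation, the one-sided bound from Proposition~\ref{prop:monotonicity-two}, and a lower bound on the weighted sum $\sum_j(\tilde c_j(0)-\tilde c_{j-1}(0))(\mathcal I_j(t)-\mathcal I_j(0))$ extracted from the expansion and coercivity of $\mathcal H$ (the paper's inequality~\eqref{eq:3}). The only cosmetic difference is that the paper splits indices according to the sign of $\mathcal I_j(t)-\mathcal I_j(0)$ rather than isolating each index $k$ as you do, which amounts to the same estimate.
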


\begin{proof}
  From~\eqref{eq:3} in the proof of Lemma~\ref{lem:energetic}, the
  decomposition of $\mathcal I$ from Lemma~\ref{lem:decomposition},
  and conservation of mass and momentum, we get 
  \begin{multline}
    \label{eq:11}
    \norm{\eps(t)}_{H^1}^2\lesssim
    \sum_{j=2}^N(\tilde c_j(0)-\tilde c_{j-1}(0))\left(\mathcal I_j(t)-\mathcal I_j(0)\right)
    \\+\sum_{j=1}^N|\tilde\omega_j(t)-\tilde\omega_j(0)|^2+\norm{\eps(0)}_{H^1}^2+e^{-\frac{1}{32}\omega_\star
      L}.
  \end{multline}
  On one hand, for all $j=1,\dots,N$ such that  $\mathcal I_j(t)-\mathcal I_j(0)\geq 0$, by Proposition
~\ref{prop:monotonicity-two} we have 
  \begin{equation}\label{eq:111}
  |\mathcal I_j(t)-\mathcal I_j(0)|\lesssim \frac{1}{L}\sup_{s\in
    [0,t]}\norm{\eps(s)}^2_{L^2}+  e^{-\frac{1}{16}\omega_\star L}.
\end{equation}
  On the other hand, 
  since by assumption $c_j>c_{j-1}$, we have for $\delta,1/L$ small
  enough that $\tilde c_j(0)-\tilde c_{j-1}(0)>0$  for all
  $j=1,\dots,N$ . Thus  for all $j=1,\dots,N$ such that $ \mathcal
  I_j(t)-\mathcal I_j(0)<0$, 
~\eqref{eq:11}-\eqref{eq:111} imply
  \[
  |\mathcal I_j(t)-\mathcal I_j(0)|\lesssim \frac{1}{L}\sup_{s\in
    [0,t]}\norm{\eps(s)}^2_{H^1}+
  \sum_{j=1}^N|\tilde\omega_j(t)-\tilde\omega_j(0)|^2+\norm{\eps(0)}_{H^1}^2+e^{-\frac{1}{32}\omega_\star L}.
  \]
  Combining these two facts gives the desired conclusion.
\end{proof}

As announced, we introduce the conserved quantity $Q_j$ combining the
mass and momentum:
\[
Q_j(u)=M(u)+\mu_{j}P(u).
\]
Remark  that, due to the choice of $\mu_j$ after
\eqref{eq:recall}, and using the explicit calculations~\eqref{FM:Mass-dw} and~\eqref{FM:Mom-dw}, we have
\begin{equation}
\frac{d}{d\tilde \omega_j}Q_j(\phi_{\tilde\omega_j,\tilde c_j})\Big|_{\tilde \omega_j=\omega_j}=
\left(\frac{c_j}{\omega_j}-4\mu_j+c_j\mu_j^2\right)\frac{1}{\sqrt{4\omega_j-c_j^2}}<0.
\label{23.03}
\end{equation}
Moreover, due to the orthogonality condition~\eqref{eq:ortho}, we have
\[
Q_j(\tilde R_j(t)+\eps)=Q_j(\tilde R_j(t))+O(\norm{\eps}_{H^1}^2).
\]
Using $(Q_j)_{j=1,\dots,N}$, we will be able to control the parameter $(\omega_j)_{j=1,\dots,N}$. To do this, we first need the following claim.

\begin{claim}\label{claim:Qj}
For any $j=1,\dots,N$ we have
\[
\Big|Q_j\left(\tilde R_j(t)\right)-Q_j\left(\tilde R_j(0)\right)\Big|
 \lesssim \sup_{s\in[0,t]}\norm{\eps(s)}_{H^1}^2
    +\sum_{k=1}^N|\tilde\omega_k(t)-\tilde\omega_k(0)|^2+e^{-\frac{1}{32}\omega_\star L}.
  \]
\end{claim}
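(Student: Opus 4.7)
The plan is to compare $Q_j(\tilde R_j(t))$ with a localized mass--momentum functional evaluated on $u$. First I would introduce
\[
\hat{\mathcal Q}_j(t) := \frac{1}{2}\int_{\R} |u(t,x)|^2 \chi_j(t,x)\,dx + \frac{\mu_j}{2}\Im\int_{\R} u\bar u_x \chi_j(t,x)\,dx,
\]
using the partition--of--unity cut--offs $\chi_j = \psi_j - \psi_{j+1}$ from Section~\ref{sec:local-proc}, which are exponentially close to $1$ on the support of $\tilde R_j$ and to $0$ on the support of $\tilde R_k$ for $k\neq j$.

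The next step is to show
\[
\hat{\mathcal Q}_j(t) = Q_j(\tilde R_j(t)) + O\bigl(\norm{\eps(t)}_{H^1}^2 + e^{-\omega_\star(\bar c_\star t + L)/16}\bigr),
\]
and analogously at $t=0$. Substituting $u = \tilde R_j + (\sum_{k\neq j}\tilde R_k + \eps)$ and expanding the quadratic functional $\hat{\mathcal Q}_j$, the cross terms involving $\tilde R_k$ with $k\neq j$ tested against $\chi_j$ are exponentially small by Lemmas~\ref{lem:interaction-estimates} and~\ref{lem:interaction-est-2}. The linear--in--$\eps$ term simplifies to $\psld{\eps}{(\tilde R_j + i\mu_j\partial_x\tilde R_j)\chi_j}$, and since $\chi_j\equiv 1$ on the support of $\tilde R_j$ up to exponential tails, this equals $\psld{\eps}{\tilde R_j + i\mu_j\partial_x\tilde R_j}$ modulo $O(e^{-\omega_\star L/16}\norm{\eps}_{L^2})$, whose leading piece vanishes by the orthogonality condition~\eqref{eq:ortho}. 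The quadratic--in--$\eps$ term is $O(\norm{\eps}_{H^1}^2)$, and phase/translation invariance of $M$ and $P$ identifies the $\tilde R_j$ contribution with $Q_j(\tilde R_j(t)) = M(\phi_{\tilde\omega_j(t),\tilde c_j(t)}) + \mu_j P(\phi_{\tilde\omega_j(t),\tilde c_j(t)})$.

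Finally, I would bound $|\hat{\mathcal Q}_j(t) - \hat{\mathcal Q}_j(0)|$ by the RHS of the claim by expressing $\hat{\mathcal Q}_j$ as a combination of the conserved mass $M$ and momentum $P$, the quasi--monotone functionals $\mathcal I_k$ (for which Claim~\ref{claim:split} gives the needed bound, which already contains the $L^{-1}\sup\norm{\eps}^2$, the $\sum_k|\tilde\omega_k(t)-\tilde\omega_k(0)|^2$ and the $e^{-\omega_\star L/16}$ contributions), and the auxiliary functionals $\mathcal I_{k,\tau_k}$ of Proposition~\ref{prop:monotonicity-three} for suitable $\tau_k\in(\tilde c_{k-1}(0),\tilde c_k(0))$. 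The hard part here is algebraic: $\hat{\mathcal Q}_j$ combines mass and momentum with ratio $1:\mu_j$, whereas each $\mathcal I_{k,\tau}$ carries the ratio $\tau/2:1$, so a direct matching would force $\tau = 2/\mu_j$, which generally falls outside the admissible interval. Overcoming this requires supplementing the decomposition with multiples of the conserved $M$ and $P$ to cancel the residual coefficients, and handling any remaining purely momentum--like contributions either by a Galilean--transform argument analogous to the one in the proof of Proposition~\ref{prop:monotonicity-two} or by careful use of Proposition~\ref{prop:monotonicity-three} at two admissible values of $\tau_k$. Combining this estimate with the approximation of the previous step then yields the claim.
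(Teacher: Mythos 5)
Your first half coincides with the paper's: the quantity $\int_\R q_j(u)\chi_j\,dx$ with $q_j=m+\mu_j p$ (your $\hat{\mathcal Q}_j$) is exactly what the paper compares with $Q_j(\tilde R_j)$, and the approximation up to $O(\norm{\eps}_{H^1}^2)+O(e^{-\frac{1}{32}\omega_\star L})$ via the orthogonality conditions \eqref{eq:ortho} and the localization lemmas is the same. You also correctly isolate the obstruction: matching the ratio $1:\mu_j$ of $q_j$ against the ratio $\tau/2:1$ of $\mathcal I_{j,\tau}$ would force $\tau=2/\mu_j$, which is not admissible. But precisely at this point the proposal stops being a proof: neither suggested remedy is carried out, and the first cannot work as stated. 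Adding multiples of the globally conserved $M(u)$ and $P(u)$ cannot adjust the mass/momentum ratio inside a single window $\chi_j$ for $2\leq j\leq N-1$, because these quantities are not localized; they help only for $j=1$ (where $\psi_1\equiv1$), which is indeed how the paper handles that case. The Galilean transform of Proposition~\ref{prop:monotonicity-two} is likewise not relevant here: it is an algebraic device inside the computation of $\partial_t\mathcal I_j$, not a way to trade localized mass against localized momentum at a fixed time.

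What actually closes the gap in the paper is a perturbative construction that your phrase ``two admissible values of $\tau_k$'' points toward but does not produce. Normalizing $\mathcal I_{j,\tau}$ so that the momentum density $p$ carries coefficient $1$, the coefficient of $q_j$ becomes $a_{j,\tau}=(2\tau^{-1}-\mu_j)^{-1}$; one chooses $\tau_j^1,\tau_j^2$ with $a_{j,\tau_j^{1}}=a_j+\epsilon_0$, $a_{j,\tau_j^{2}}=a_j-\epsilon_0$, and forms $\lambda_{j,\tau_j^{1}}\mathcal I_{j,\tau_j^{1}}-\lambda_j\mathcal I_j$: the $p$-parts cancel up to the cutoff mismatch $\psi_{j,\tau_j^1}-\psi_j$, which is supported away from all solitons and hence exponentially small, leaving $\pm\epsilon_0\int_\R q_j(u)\psi_j\,dx$ as in \eqref{09.21}--\eqref{09.22}; a parallel construction with $q_{j-1}$, the coefficients $b_{j+1}$ and speeds $\tau_j^3,\tau_j^4$ (identities \eqref{09.23}--\eqref{09.24}) converts $\psi_j$ into $\chi_j=\psi_j-\psi_{j+1}$. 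There is also a sign bookkeeping you do not address: Proposition~\ref{prop:monotonicity-three} gives only a one-sided (upper) bound on $\mathcal I_{j,\tau}(t)-\mathcal I_{j,\tau}(0)$, and no analogue of Claim~\ref{claim:split} is available for these auxiliary functionals, so the two-sided estimate of the claim requires running the argument twice, with $+\epsilon_0$ and with $-\epsilon_0$, each combination arranged so that the $\mathcal I_{j,\tau}$-differences enter with the favorable sign while only the $\mathcal I_j$-differences (controlled in absolute value by Claim~\ref{claim:split}) enter with the unfavorable one; the endpoint cases $j=1$ and $j=N$ then need the separate treatment via conservation of $M$ and $P$, respectively via $\psi_N$ alone. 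Without this construction the final step of your plan remains an unproved assertion, so the proposal has a genuine gap.
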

\begin{proof}
Let $j=1,\dots,N$. We set the notations,
\[
m(u)=\frac12|u|^2; \quad p(u)=\frac12\Im
    (u\bar u_x);\quad
q_j(u)=m(u)+\mu_{j}p(u).
\]
Then we can rewrite $\mathcal I_j$ as
\begin{multline}
\mathcal I_j(t)=\int_{\R}\left(\frac12\tilde\sigma_jm(u)+p(u)\right)\psi_j\,dx\\
=\frac12\tilde\sigma_j\left(2\tilde\sigma_j^{-1}-\mu_j\right)
\int_{\R}\left(\left(2\tilde\sigma_j^{-1}-\mu_j\right)^{-1}q_j(u)+p(u)\right)\psi_j\,dx.\label{eq:y1}
\end{multline}
We assume that $2\tilde\sigma_j^{-1}-\mu_j\neq 0$ (otherwise, the formulas~\eqref{09.21}--\eqref{09.24} can be obtained more simply).
We introduce the constants
\[
\lambda_j=\left(\frac12\tilde\sigma_j\left(2\tilde\sigma_j^{-1}-\mu_j\right)\right)^{-1},\qquad
a_j=\left(2\tilde\sigma_j^{-1}-\mu_j\right)^{-1}.
\]
Then~\eqref{eq:y1} becomes
\begin{align}
\lambda_j\mathcal I_j(t)
=&
\int_{\R}\left(a_jq_j(u)+p(u)\right)\psi_j\,dx.\label{21.10}
\end{align}
Similarly, we rewrite $\mathcal I_{j,\tau_j}$ as
\begin{align*}
\lambda_{j,\tau_j}\mathcal I_{j,\tau_j}(t)
=&
\int_{\R}\left(a_{j,\tau_j} q_j(u)+p(u)\right)\psi_{j,\tau_j}\,dx,
\end{align*}
where  we have set
\[
\lambda_{j,\tau_j}=\left(\frac12\tau_j\left(2\tau_j^{-1}-\mu_j\right)\right)^{-1},\qquad
a_{j,\tau_j}=\left(2\tau_j^{-1}-\mu_j\right)^{-1}.
\]
In addition to~\eqref{21.10}, we also need another formula of
$\mathcal I_j(t)$ based on $q_{j-1}$ when $j\geq 2$.
\begin{align*}
\mathcal I_j(t)=\frac12\tilde\sigma_j\left(2\tilde\sigma_j^{-1}-\mu_{j-1}\right)
\int_{\R}\left(\left(2\tilde\sigma_j^{-1}-\mu_{j-1}\right)^{-1}q_{j-1}(u)+p(u)\right)\psi_j\,dx.
\end{align*}
Here, we also assume that $2\tilde\sigma_j^{-1}-\mu_{j-1}\neq 0$.
Let
\[
\gamma_j=\left(\frac12\tilde\sigma_j\left(2\tilde\sigma_j^{-1}-\mu_{j-1}\right)\right)^{-1},\qquad
b_{j}=\left(2\tilde\sigma_j^{-1}-\mu_{j-1}\right)^{-1},
\]
then
\begin{align*}
\gamma_j\mathcal I_j(t)
=&
\int_{\R}\left(b_{j}q_{j-1}(u)+p(u)\right)\psi_j\,dx.
\end{align*}
Similarly, we also rewrite $\mathcal I_{j,\tau_j}$ as
\begin{align*}
\gamma_{j,\tau_j}\mathcal I_{j,\tau_j}(t)
=&
\int_{\R}\left(b_{j,\tau_j} q_{j-1}(u)+p(u)\right)\psi_{j,\tau_j}\,dx,
\end{align*}
where we have set
\[
\gamma_{j,\tau_j}=\left(\frac12\tau_j\left(2\tau_j^{-1}-\mu_{j-1}\right)\right)^{-1},\qquad
b_{j,\tau_j}=\left(2\tau_j^{-1}-\mu_{j-1}\right)^{-1}.
\]
Take a small constant $\epsilon_0>0$,  and set the constants $\tau_j^1,\dots, \tau_j^4$ such that
\begin{align*}
a_{j,\tau_j^1}&=a_j+\epsilon_0,&a_{j,\tau_j^2}&=a_j-\epsilon_0,\\
b_{j+1,\tau_j^3}&=b_{j+1}+\epsilon_0,&b_{j+1,\tau_j^4}&=b_{j+1}-\epsilon_0.
\end{align*}
Then  for $j=1,\dots,N$ we have the following identity
\[
\lambda_{j,\tau_j^1}\mathcal I_{j,\tau_j^1}(t)-\lambda_j\mathcal
I_j(t)=\epsilon_0\int_{\R}
q_j(u)\psi_j\,dx+\int_{\R}(a_{j,\tau_j^1}q_j(u)+p(u))(\psi_{j,\tau_j}-\psi_j)dx.
\]
The function $(\psi_j-\psi_{j,\tau_j})$ is zero for $x<\tilde
  x_{j-1}+\frac{(c_{j-1}+\min(\tau_j,\tilde\sigma_j))}{2}t$ and for $x>\tilde
  x_{j}-\frac{(\max(\tau_j,\tilde\sigma_j)+ c_{j})}{2}t$.
  Hence,  due to the
  exponential localization of the solitons around each $\tilde x_k$
  and using the lower bound $L/2$ on the distance between $\tilde
  x_j-\tilde x_k$ given by~\eqref{eq:dist-mod-x}
  we have  
  \[
\abs*{\int_{\R}\left(a_{j,\tau_j^1}q_j\left(\sum_{k=1}^N\tilde R_k\right)+p\left(\sum_{k=1}^N\tilde R_k\right)\right)(\psi_{j,\tau_j}-\psi_j)dx}
  \lesssim e^{-\frac{1}{32}\omega_\star L}.
  \]
Similar estimates can be obtained replacing $\tau_j^1$ by $\tau_j^l$,
$l=2,3,4$ and $j$ by $j+1$. As a consequence, for $j=1,\dots,N$ we have the following identities,
\begin{align}
\lambda_{j,\tau_j^1}\mathcal I_{j,\tau_j^1}(t)-\lambda_j\mathcal I_j(t)&=\phantom{-}\epsilon_0\int_{\R} q_j(u)\psi_j\,dx+O(e^{-\frac{1}{32}\omega_\star L}),\label{09.21}\\
\lambda_{j,\tau_j^2}\mathcal I_{j,\tau_j^2}(t)-\lambda_j\mathcal
  I_j(t)&=-\epsilon_0\int_{\R}
          q_j(u)\psi_j\,dx+O(e^{-\frac{1}{32}\omega_\star
          L}),\label{09.22}
\end{align}
and for $j=1,\dots,N-1$ we have
\begin{align}
\gamma_{j+1,\tau_j^3}\mathcal I_{j+1,\tau_j^3}(t)-\gamma_{j+1}\mathcal I_{j+1}(t)&=\phantom{-}\epsilon_0\int_{\R} q_j(u)\psi_{j+1}\,dx+O(e^{-\frac{1}{32}\omega_\star L}),\label{09.23}\\
\gamma_{j+1,\tau_j^4}\mathcal I_{j+1,\tau_j^4}(t)-\gamma_{j+1}\mathcal I_{j+1}(t)&=-\epsilon_0\int_{\R} q_j(u)\psi_{j+1}\,dx+O(e^{-\frac{1}{32}\omega_\star L}).\label{09.24}
\end{align}
We assume that $\lambda_j,\gamma_{j+1}$ are both positive, the other
cases being treated similarly.
Choosing $\epsilon_0$ small enough, we can assume
$\lambda_{j,\tau_j^1}, \lambda_{j,\tau_j^2},
\gamma_{j+1,\tau_j^3},\gamma_{j+1,\tau_j^4}$ are also positive. Then
from~\eqref{09.21}--\eqref{09.24} for $j=1,\dots, N-1$, we have
\begin{align*}
&\epsilon_0\int_\R  q_j(u)\chi_j(t,x)\,dx\\
 &\quad = \left(\lambda_{j,\tau_j^1}\mathcal I_{j,\tau_j^1}(t)-\lambda_j\mathcal I_j(t)\right)+\left(\gamma_{j+1,\tau_j^4}\mathcal I_{j+1,\tau_j^4}(t)-\gamma_{j+1}\mathcal I_{j+1}(t)\right)+O(e^{-\frac{1}{32}\omega_\star L}),\\
&-\epsilon_0\int_\R  q_j(u)\chi_j(t,x)\,dx\\
 &\quad= \left(\lambda_{j,\tau_j^2}\mathcal I_{j,\tau_j^2}(t)-\lambda_j\mathcal I_j(t)\right)+\left(\gamma_{j+1,\tau_j^3}\mathcal I_{j+1,\tau_j^3}(t)-\gamma_{j+1}\mathcal I_{j+1}(t)\right)+O(e^{-\frac{1}{32}\omega_\star L}).
\end{align*}
Moreover, since $u=\sum_{k=1}^N \tilde R_k(t)+\eps$, and the support of $\chi_j$ is far away from the center of the soliton $\tilde R_k(t)$ when $k\neq j$, we have
\[
\int_\R  q_j(u)\chi_j\,dx=Q_j(\tilde R_j(t))+O(\|\eps\|_{H^1}^2)+O(e^{-\frac{1}{32}\omega_\star L}),
\]
where we have used the orthogonality conditions~\eqref{eq:ortho} to
cancel the first order term. 
Therefore,
\begin{multline*}
\epsilon_0\left(Q_j\left(\tilde R_j(t)\right)-Q_j\left(\tilde R_j(0)\right)\right)
=\epsilon_0\int_\R  q_j(u(t))\chi_j(t)\,dx-\epsilon_0\int_\R
q_j(u(0))\chi_j(0)\,dx\\
\shoveright{+O(\norm{\eps(t)}_{H^1}^2+\norm{\eps(0)}_{H^1}^2+e^{-\frac{1}{32}\omega_\star L})}\\
  = \left(\lambda_{j,\tau_j^1}\mathcal I_{j,\tau_j^1}(t)-\lambda_{j,\tau_j^1}\mathcal I_{j,\tau_j^1}(0)\right)+\left(\gamma_{j+1,\tau_j^4}\mathcal I_{j+1,\tau_j^4}(t)-\gamma_{j+1,\tau_j^4}\mathcal I_{j+1,\tau_j^4}(0)\right)\\
 -\left(\lambda_j\mathcal I_j(t)-\lambda_j\mathcal I_j(0)\right) -\left(\gamma_{j+1}\mathcal I_{j+1}(t)-\gamma_{j+1}\mathcal I_{j+1}(0)\right)\\
+O\Big(\sup_{s\in[0,t]}\norm{\eps(s)}_{H^1}^2+e^{-\frac{1}{32}\omega_\star L}\Big).
\end{multline*}
Now we use Proposition~\ref{prop:monotonicity-three} to control the
first and second terms,  and we use Claim~\ref{claim:split} to control
the third and fourth terms.  For $j=2,\dots, N-1$, we obtain 
\begin{align}
Q_j\left(\tilde R_j(t)\right)-Q_j\left(\tilde R_j(0)\right)\lesssim
\sup_{s\in[0,t]}\norm{\eps(s)}_{H^1}^2
    +\sum_{j=1}^N|\tilde\omega_j(t)-\tilde\omega_j(0)|^2+e^{-\frac{1}{32}\omega_\star L}.\label{17.33}
\end{align}
Similarly,
\begin{multline*}
-\epsilon_0\left(Q_j\left(\tilde R_j(t)\right)-Q_j\left(\tilde R_j(0)\right)\right)
=-\epsilon_0\int_\R  q_j(u(t))\chi_j(t)\,dx+\epsilon_0\int_\R
  q_j(u(0))\chi_j(0)\,dx\\
\shoveright{+O(\norm{\eps(t)}_{H^1}^2+\norm{\eps(0)}_{H^1}^2+e^{-\frac{1}{32}\omega_\star L})}\\
  = \left(\lambda_{j,\tau_j^2}\mathcal I_{j,\tau_j^2}(t)-\lambda_{j,\tau_j^2}\mathcal I_{j,\tau_j^2}(0)\right)+\left(\gamma_{j+1,\tau_j^3}\mathcal I_{j+1,\tau_j^3}(t)-\gamma_{j+1,\tau_j^3}\mathcal I_{j+1,\tau_j^3}(0)\right)\\
 -\left(\lambda_j\mathcal I_j(t)-\lambda_j\mathcal I_j(0)\right)
 -\left(\gamma_{j+1}\mathcal I_{j+1}(t)-\gamma_{j+1}\mathcal
   I_{j+1}(0)\right)\\
+O(\sup_{s\in[0,t]}\norm{\eps(s)}_{H^1}^2+e^{-\frac{1}{32}\omega_\star L}).
\end{multline*}
Hence, arguing as before, for $j=2,\dots,N-1$, we obtain
\begin{align}
-\left(Q_j\left(\tilde R_j(t)\right)-Q_j\left(\tilde R_j(0)\right)\right)\lesssim
\sup_{s\in[0,t]}\norm{\eps(s)}_{H^1}^2
    +\sum_{j=1}^N|\tilde\omega_j(t)-\tilde\omega_j(0)|^2+e^{-\frac{1}{32}\omega_\star L}.\label{17.34}
\end{align}
Now combining~\eqref{17.33} and~\eqref{17.34}, we get  for $j=2,\dots,N-1$,
\begin{align}
\Big|Q_j\left(\tilde R_j(t)\right)-Q_j\left(\tilde R_j(0)\right)\Big|\lesssim
\sup_{s\in[0,t]}\norm{\eps(s)}_{H^1}^2
    +\sum_{j=1}^N|\tilde\omega_j(t)-\tilde\omega_j(0)|^2+e^{-\frac{1}{32}\omega_\star L}.\label{22.40}
\end{align}
Now we remain to treat the case $j=1$ and $j=N$. For $j=N$, since
\[
\int_\R q_N(u)\psi_N\,dx=Q_N(\tilde R_N(t))+O\Big(\sup_{s\in[0,t]}\norm{\eps(s)}_{H^1}^2+e^{-\frac{1}{32}\omega_\star L}\Big),
\]
from~\eqref{09.21},  we have
\begin{align*}
&\epsilon_0\left(Q_N\left(\tilde R_N(t)\right)-Q_N\left(\tilde R_N(0)\right)\right)\\
=&\epsilon_0\int_\R  q_N(u(t))\psi_N(t)\,dx-\epsilon_0\int_\R  q_N(u(0))\psi_N(0)\,dx+O\Big(\sup_{s\in[0,t]}\norm{\eps(s)}_{H^1}^2+e^{-\frac{1}{32}\omega_\star L}\Big)\\
  = &\left(\lambda_{N,\tau_N^1}\mathcal I_{N,\tau_N^1}(t)-\lambda_{N,\tau_N^1}\mathcal I_{N,\tau_N^1}(0)\right) -\left(\lambda_N\mathcal I_N(t)-\lambda_N\mathcal I_N(0)\right)\\
& +O\Big(\sup_{s\in[0,t]}\norm{\eps(s)}_{H^1}^2+e^{-\frac{1}{32}\omega_\star L}\Big).
\end{align*}
Again, we use Proposition~\ref{prop:monotonicity-three} to control the first term,  and use Claim~\ref{claim:split} to control the second term, we obtain
\begin{align*}
Q_N\left(\tilde R_N(t)\right)-Q_N\left(\tilde R_N(t)\right)\lesssim
\sup_{s\in[0,t]}\norm{\eps(s)}_{H^1}^2
    +\sum_{j=1}^N|\tilde\omega_j(t)-\tilde\omega_j(t)|^2+e^{-\frac{1}{32}\omega_\star L}.
\end{align*}
Using~\eqref{09.22} instead,  we obtain from similar arguments that
\begin{align*}
-Q_N\left(\tilde R_N(0)\right)-Q_N\left(\tilde R_N(t)\right)\lesssim
\sup_{s\in[0,t]}\norm{\eps(s)}_{H^1}^2
    +\sum_{j=1}^N|\tilde\omega_j(t)-\tilde\omega_j(0)|^2+e^{-\frac{1}{32}\omega_\star L}.
\end{align*}\
Therefore, we get~\eqref{22.40} when $j=N$.

At last, we consider the case $j=1$.  We use~\eqref{09.23} and~\eqref{09.24} to get
\begin{align*}
-\epsilon_0\int_\R q_1(u)\psi_1\,dx=&-\epsilon_0Q_1(u)+\left(\gamma_{2,\tau_1^3}\mathcal I_{2,\tau_1^3}(t)-\gamma_2\mathcal I_2(t)\right),\\
\epsilon_0\int_\R q_1(u)\psi_1\,dx=&\epsilon_0Q_1(u)+\left(\gamma_{2,\tau_1^4}\mathcal I_{2,\tau_1^4}(t)-\gamma_2\mathcal I_2(t)\right).
\end{align*}
Then by mass and momentum conservation laws, and similar arguments as before, we also obtain~\eqref{22.40} when $j=1$.
\end{proof}

With these preliminaries out of the way, let us prove Lemma
\ref{lem:control-modulation}.

\begin{proof}[Proof of Lemma~\ref{lem:control-modulation}]

  From~\eqref{23.03} and~\eqref{eq:initial-time}, we
  know that for all $\tilde \omega$ and $\tilde c$ with the
  relationship~\eqref{eq:wc}, we have
\[
\frac{d}{d\tilde \omega_j}Q_j(\phi_{\tilde\omega_j,\tilde c_j})\neq 0.
\]
  Consequently, for any $j=1,\dots,N$ we have
  \[
    |\tilde \omega_j(t)-\tilde\omega_j(0)|\lesssim \Big|Q_j\left(\tilde R_j(t)\right)-Q_j\left(\tilde R_j(0)\right)\Big|
  \]
  Hence, from Claim~\ref{claim:Qj},  for $j=1,\dots,N$ , we obtain
  \[
    |\tilde \omega_j(t)-\tilde\omega_j(0)|\lesssim \sup_{s\in[0,t]}\norm{\eps(s)}_{H^1}^2
    +\sum_{k=1}^N|\tilde\omega_k(t)-\tilde\omega_k(0)|^2+e^{-\frac{1}{32}\omega_\star L}.
  \]
  This allows us to infer that for $j=1,\dots, N$ we have
  \[
  |\tilde \omega_j(t)-\tilde\omega_j(0)|\lesssim \sup_{s\in[0,t]}\norm{\eps(s)}_{H^1}^2
  +e^{-\frac{1}{32}\omega_\star L},
  \]
  and this concludes the proof of Lemma~\ref{lem:control-modulation}.
\end{proof}

\section{Proof of the Bootstrap Result}
\label{sec:proof-bootstr-result}

\begin{proof}[Proof of Proposition~\ref{prop:bootstrap}]
  With Lemmas~\ref{lem:energetic} and 
~\ref{lem:control-modulation} in hands, we can now conclude the proof
  of Proposition~\ref{prop:bootstrap}. For $\delta$ and $1/L$ small
  enough, we have 
  \[
  \norm{\eps(t)}_{H^1}^2\leq
  \frac{1}{2}\sup_{s\in[0,t]}\norm{\eps(s)}_{H^1}^2+C\norm{\eps(0)}
  _{H^1}^2+Ce^{-\frac{1}{32}\omega_\star L}.
  \]
  Therefore, for all $t\in[0,t^\star]$, we have 

  \begin{equation}
    \frac{1}{2}\sup_{s\in[0,t]}\norm{\eps(s)}_{H^1}^2\leq
    C\norm{\eps(0)}
    _{H^1}^2+Ce^{-\frac{1}{32}\omega_\star L}.
    \label{eq:18}
  \end{equation}
  Plugging that back in the control on the modulation parameters Lemma
~\ref{lem:control-modulation} we obtain
  \[
    \sum_{j=1}^N|\tilde\omega_j(t)-\tilde\omega_j(0)|
    \leq
    C\norm{\eps(0)}
    _{H^1}^2+Ce^{-\frac{1}{32}\omega_\star L}.
  \]
  From the modulation result Proposition~\ref{prop:modulation} and the
  bootstrap assumption we also
  have  at time $t=0$  the following estimate,
  \[
    \sum_{j=1}^N|\tilde\omega_j(0)-\omega_j|\leq \tilde C\delta.
 \]
  Recall also that
  \begin{equation}
\label{eq:21}
    \norm{\eps(0)}
    _{H^1}\leq \delta.
  \end{equation}
  We combine now~\eqref{eq:18}-\eqref{eq:21} to conclude the proof:
  \begin{multline*}
    \norm*{u-\sum_{j=1}^Ne^{\tilde\theta_j}\phi_j(\cdot-\tilde x_j)}_{H^1}
    \\
    \leq   \norm*{u-\sum_{j=1}^N\tilde
      R_j}_{H^1}+\norm*{\sum_{j=1}^N\tilde
      R_j-\sum_{j=1}^Ne^{\tilde\theta_j}\phi_j(\cdot-\tilde x_j)}_{H^1}
    \\
    \leq
    \norm{\eps}_{H^1}+C \sum_{j=1}^N|\tilde\omega_j(t)-\omega_j|
    \\
    \leq
    \norm{\eps}_{H^1}+C \sum_{j=1}^N|\tilde\omega_j(t)-\tilde\omega_j(0)|
    + C\sum_{j=1}^N|\tilde\omega_j(0)-\omega_j|
    \leq C_0\left(\delta+e^{-\frac{1}{32}\omega_\star L}\right).
  \end{multline*}
  Note that this last constant $C_0$ is \emph{independent} of
  $A_0$. Hence we may chose $A_0=2C_0$ and this concludes the proof. 
\end{proof}

\appendix

\section{Some Explicit Formulas}
\label{sec:some-techn-results}

In this section, we give explicit formulas for quantities evaluated on $\phi_{\omega,c}$.
Since they are obtained by elementary calculations using the explicit
formula~\eqref{eq:16}-\eqref{eq:varphi-formula} for $\phi_{\omega,c}$, we omit the details. 
We start with remarkable $L^p$-norms.
\begin{align}
\|\phi_{\omega,c}\|_{L^2}^2=&8\arctan\sqrt{\frac{2\sqrt{\omega}+c}{2\sqrt{\omega}-c}},\label{FM:L2}\\
\|\phi_{\omega,c}\|_{L^4}^4=&16c\arctan\sqrt{\frac{2\sqrt{\omega}+c}{2\sqrt{\omega}-c}}+8\sqrt{4\omega-c^2},\label{FM:L4}\\
\|\phi_{\omega,c}\|_{L^6}^6=&32(c^2+2\omega)\arctan\sqrt{\frac{2\sqrt{\omega}+c}{2\sqrt{\omega}-c}}+24c\sqrt{4\omega-c^2}. \label{FM:L6}
\end{align}
We also have
\begin{equation}\label{eq:norm-derivative-phi}
\norm{\partial_x\phi_{\omega,c}}_{L^2}^2=8\omega \arctan \sqrt{\frac{2\sqrt{\omega}+c}{2\sqrt{\omega}-c}}.
\end{equation}
The mass, momentum and  energy are given by
\begin{align}
M(\phi_{\omega,c})=&4\arctan\sqrt{\frac{2\sqrt{\omega}+c}{2\sqrt{\omega}-c}},\label{FM:Mass}\\
P(\phi_{\omega,c})=&\sqrt{4\omega-c^2},\label{FM:Mom}\\
E(\phi_{\omega,c})=&-\frac{c}{2}\sqrt{4\omega-c^2}. \label{FM:En}
\end{align}
Moreover, we have
\begin{align}
\partial_\omega M(\phi_{\omega,c})&=-\frac{c}{\omega\sqrt{4\omega-c^2}},\label{FM:Mass-dw}\\
\partial_c M(\phi_{\omega,c})=\partial_\omega P(\phi_{\omega,c})&=\frac{2}{\sqrt{4\omega-c^2}},\label{FM:Mom-dw}\\
\partial_c P(\phi_{\omega,c})&=-\frac{c}{\sqrt{4\omega-c^2}}. \label{FM:Mom-c}
\end{align}

\bibliographystyle{abbrv}
\bibliography{lecoz-wu}

\end{document}